\newtheorem{theorem}{Theorem}
\newtheorem{lemma}[theorem]{Lemma}
\newtheorem{proposition}[theorem]{Proposition}
\theoremstyle{definition}
\theoremstyle{remark}
\numberwithin{equation}{section}
\newcommand{\D}{\mathbb{D}}
\newcommand{\DD}{\widehat{\mathcal{D}}}
\newcommand{\N}{\mathbb{N}}
\newcommand{\R}{\mathbb{R}}
\newcommand{\C}{\mathbb{C}}
\renewcommand{\phi}{\varphi}
\newcommand{\T}{\mathbb{T}}
\def\a{\alpha}       \def\b{\beta}        \def\g{\gamma}
\def\d{\delta}       \def\De{{\Delta}}    
\def\la{\lambda}     \def\om{\omega}      
       \def\t{\theta}       
         \def\r{\rho}         \def\z{\zeta}
                  \def\vp{\varphi}
\def\G{\Gamma}
\def\R{{\mathcal R}}
\def\I{{\mathcal I}}
\begin{document}

\title[Embedding theorems for Bergman spaces via harmonic analysis]{Embedding theorems for Bergman spaces via harmonic analysis}

\keywords{Bergman space, Carleson measure, differentiation operator, duality, factorization, Hardy space, Maximal function, stopping time, tent space.}

\thanks{This research was supported in part by the Ram\'on y Cajal program
of MICINN (Spain); by Ministerio de Edu\-ca\-ci\'on y Ciencia, Spain, projects
MTM2011-25502 and MTM2011-26538;  by   La Junta de Andaluc{\'i}a, (FQM210) and
(P09-FQM-4468);  by Academy of Finland project no. 268009,  by V\"ais\"al\"a Foundation of Finnish Academy of Science and Letters, and by Faculty of Science and Forestry of University of Eastern Finland project no. 930349.
}

\thanks{The authors wish the thank Brett Wick for pointing out the reference \cite{Sawyer} with regard to maximal functions.}

\author{Jos\'e \'Angel Pel\'aez}
\address{Departamento de An\'alisis Matem\'atico, Universidad de M\'alaga, Campus de
Teatinos, 29071 M\'alaga, Spain} \email{japelaez@uma.es}

\author{Jouni R\"atty\"a}
\address{University of Eastern Finland, P.O.Box 111, 80101 Joensuu, Finland}
\email{jouni.rattya@uef.fi}
\date{\today}


\begin{abstract}
Let $A^p_\om$ denote the Bergman space in the unit disc induced by a radial weight~$\omega$
with the doubling property $\int_{r}^1\om(s)\,ds\le C\int_{\frac{1+r}{2}}^1\om(s)\,ds$.
The positive Borel measures such that the differentiation operator of order $n\in\N\cup\{0\}$ is bounded from $A^p_\om$ into $L^q(\mu)$ are characterized in terms of geometric conditions when $0<p,q<\infty$. En route to the proof a theory of tent spaces for weighted Bergman spaces is built.
\end{abstract}

\maketitle



\section{Introduction}

The purpose of this paper is to characterize, in terms of geometric conditions,
the positive Borel measures $\mu$ on the unit disc $\D$ such that
    \begin{equation}\label{eq:main}
    \left(\int_{\D}|f^{(n)}(z)|^q\,d\mu(z)\right)^{\frac1q}\le C\|f\|_{A^p_\om},
    \end{equation}
where $n\in\N\cup\{0\}$ and $A^p_\om$ is the Bergman space induced by~$\omega$ in the class $\widehat{\mathcal{D}}$ which consists of the radial weights $\om$ such that $\widehat{\om}(r)=\int_r^1\om(s)\,ds$ satisfies ${\widehat{\om}(r)}/{\widehat{\om}(\frac{1+r}{2})}$ is bounded.
The inequality \eqref{eq:main} for the standard weighted Bergman spaces has been studied by many authors since seventies~\cite{Luecking1985,Lu93}. The problem has also been solved for the Bekoll\'e-Bonami and, if $n=0$, the rapidly decreasing weights~\cite{OC,PP}.

A weight $\om\in\DD$ cannot decrease exponentially but may grow such that the integral over the pseudohyperbolic disc $\Delta(a,r)$ with respect to $\om dA$ exceeds that of the Carleson square $S(a)$ in decay as $a$ approaches the boundary. In this case $\frac{\om(z)}{(1-|z|)^\eta}$ does not belong to the Bekoll\'e-Bonami class $B_p(\eta)$. In the half plane the measures satisfying \eqref{eq:main} have been characterized in~\cite{JacPartPott}, when $q=p\ge 1$,~$n=0$ and $\om$ satisfies a doubling property. In the unit disc, the case $q\ge p$ and $n=0$ has been considered in \cite[Chapter~2]{PelRat} for a subclass of $\DD$. The approach taken here, to be explained together with the statements, is different from those employed in the above-mentioned references and works for all $0<p,q<\infty$ and $n\in\N\cup\{0\}$.

For a positive Borel measure $\mu$ on~$\D$ and $\a>0$, we define the weighted maximal function
    $$
    M_{\om,\alpha}(\mu)(z)=\sup_{z\in S(a)}\frac{\mu(S(a))}{\left(\om\left(S(a)
    \right)\right)^\a},\quad
    z\in\D.
    $$
In the case $\a=1$ we simply write $M_{\om}(\mu)$, and if $\mu$ is of the form $\vp\om\,dA$, then $M_{\om,\a}(\mu)$ is the weighted maximal function  $M_{\om,\a}(\vp)$ of $\vp$. These maximal functions as well as the non-tangential approach regions
    \begin{equation*}\label{eq:gammadeuintro}
    \Gamma(\z)=\left\{z\in \D:\,|\t-\arg
    z|<\frac12\left(1-\frac{|z|}{r}\right)\right\},\quad
    \z=re^{i\theta}\in\D\setminus\{0\},
    \end{equation*}
and the related tents $T(z)=\left\{\z\in \D:\,z\in\Gamma(\z)\right\}$ will be frequently used in our study. Our first result characterizes the $q$-Carleson measures for $A^p_\om$.

\begin{theorem}\label{Theorem:CarlesonMeasures}
Let $0<p,q<\infty$, $\om\in\DD$ and $\mu$ be a positive Borel measure on~$\D$.
\begin{itemize}
\item[\rm(a)] If $p>q$, the following conditions are equivalent:
\begin{enumerate}
\item[\rm(i)] $\mu$ is a $q$-Carleson measure for $A^p_\om$;
\item[\rm(ii)] The function
    $$
    B_\mu(z)=\int_{\Gamma(z)}\frac{d\mu(\z)}{\om(T(\z))},\quad
    z\in\D\setminus\{0\},
    $$
belongs to $L^{\frac{p}{p-q}}_\om$;
\item[\rm(iii)] $M_\om(\mu)\in L^{\frac{p}{p-q}}_\om$.
\end{enumerate}
\item[\rm(b)] $\mu$ is a $p$-Carleson measure for $A^p_\omega$ if
and only if $M_{\om}(\mu)\in L^\infty$.
\item[\rm(c)] If $q>p$, the following conditions are equivalent:
\begin{enumerate}
\item[\rm(i)] $\mu$ is a $q$-Carleson measure for $A^p_\om$;
\item[\rm(ii)] $M_{\om,q/p}(\mu)\in L^\infty$;
\item[\rm(iii)]  $\displaystyle z\mapsto\frac{\mu\left(\Delta(z,r)\right)}{(\om\left(S(z)
    \right))^\frac{q}{p}}$ belongs to $L^\infty$ for any fixed $r\in(0,1)$.
\end{enumerate}
\end{itemize}
\end{theorem}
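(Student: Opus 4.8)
The plan is to separate the three regimes according to whether the Carleson condition is a pointwise supremum bound ($q\ge p$) or an integrability bound ($p>q$), and to reduce everything to a small toolkit valid for $\om\in\DD$: (1) the comparabilities $\om(S(a))\asymp\om(T(a))\asymp\om(\Delta(a,r))$ for fixed $r$, coming from the doubling of $\widehat\om$; (2) the submean-value estimate $|f(\z)|^q\lesssim\bigl(\om(S(\z))^{-1}\int_{\Delta(\z,r)}|f|^p\om\,dA\bigr)^{q/p}$ for analytic $f$; (3) the boundedness on $L^t_\om$, $t>1$, of the weighted maximal operator $h\mapsto M_\om(h)$ together with the bound $\|N(f)\|_{L^p_\om}\lesssim\|f\|_{A^p_\om}$ for analytic $f$, where $N(f)(z)=\sup_{\z\in\Gamma(z)}|f(\z)|$; and (4) for each $a$ a normalized peak function $f_a\in A^p_\om$ with $\|f_a\|_{A^p_\om}\asymp1$ and $|f_a|\gtrsim\om(S(a))^{-1/p}$ on $\Delta(a,r)$. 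The computational hinge is the Fubini identity
\[
\int_\D|f|^q\,d\mu=\int_\D\Bigl(\int_{\Gamma(z)}|f(\z)|^q\frac{d\mu(\z)}{\om(T(\z))}\Bigr)\om(z)\,dA(z),
\]
valid because $\z\in\Gamma(z)\Leftrightarrow z\in T(\z)$ and $\int_{T(\z)}\om\,dA=\om(T(\z))$; note the inner integral at $f\equiv1$ is exactly $B_\mu(z)$.

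For $q\ge p$ (parts (b) and (c)) the condition is a supremum bound. The equivalence (ii)$\Leftrightarrow$(iii) of (c) is the geometric statement that, after adjusting $r$, the quotients $\mu(S(a))/\om(S(a))^{q/p}$ and $\mu(\Delta(z,r))/\om(S(z))^{q/p}$ have comparable suprema, immediate from (1). For sufficiency I would fix a covering of $\D$ by boxes $\{R_k\}$ of bounded overlap, estimate $\int_\D|f|^q\,d\mu\le\sum_k\mu(R_k)\sup_{R_k}|f|^q$, substitute (2), insert $\mu(R_k)\lesssim\om(R_k)^{q/p}$, and then exploit $q/p\ge1$ through $\sum_k t_k^{q/p}\le(\sum_k t_k)^{q/p}$ to collapse the sum to $\bigl(\int_\D|f|^p\om\,dA\bigr)^{q/p}=\|f\|_{A^p_\om}^q$. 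Necessity follows by testing \eqref{eq:main} (with $n=0$) against the peak functions $f_a$ of (4), which yields $\mu(\Delta(a,r))\lesssim\om(S(a))^{q/p}$. Part (b) is the endpoint $q=p$, where the supremum bound reads exactly $M_\om(\mu)\in L^\infty$.

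The case $p>q$ of part (a) is the core, and I would run the cycle (ii)$\Rightarrow$(i)$\Rightarrow$(iii)$\Rightarrow$(ii) with conjugate exponents $s=\frac{p}{p-q}$ and $s'=\frac{p}{q}$. For (ii)$\Rightarrow$(i): bounding $|f(\z)|\le N(f)(z)$ for $\z\in\Gamma(z)$ in the Fubini identity gives $\int_\D|f|^q\,d\mu\le\int_\D N(f)^q\,B_\mu\,\om\,dA$, and Hölder with exponents $s'$ and $s$ together with (3) yields $\int_\D|f|^q\,d\mu\le\|N(f)\|_{L^p_\om}^q\|B_\mu\|_{L^s_\om}\lesssim\|f\|_{A^p_\om}^q\|B_\mu\|_{L^s_\om}$. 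For (iii)$\Rightarrow$(ii): pairing $B_\mu$ with $0\le h\in L^{s'}_\om$ and using the adjoint form of the Fubini identity rewrites $\int_\D B_\mu\,h\,\om\,dA=\int_\D\bigl(\om(T(\z))^{-1}\int_{T(\z)}h\om\,dA\bigr)d\mu(\z)\lesssim\int_\D M_\om(h)\,d\mu$; discretizing over $\{R_k\}$, using the trivial bound $\mu(R_k)\le\om(R_k)\inf_{R_k}M_\om(\mu)$, Hölder with exponents $s,s'$, and the maximal theorem (3) give $\int_\D M_\om(h)\,d\mu\lesssim\|M_\om(\mu)\|_{L^s_\om}\|h\|_{L^{s'}_\om}$, whence $\|B_\mu\|_{L^s_\om}\lesssim\|M_\om(\mu)\|_{L^s_\om}$ on taking the supremum over $h$.

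The decisive and hardest implication is the necessity (i)$\Rightarrow$(iii). Single peak functions only detect the $L^\infty$ (that is, $q\ge p$) condition, so I would import Luecking's randomization: with Rademacher signs $\e_k(t)$ set $f_t=\sum_k\e_k(t)\,a_k\,f_{z_k}$ over the lattice $\{z_k\}$, average \eqref{eq:main} ($n=0$) over $t$, and apply Khinchine's inequality on both sides. Using the almost-orthogonality (off-diagonal decay) of the normalized peak functions and the doubling of $\om$, the $A^p_\om$ side collapses to $\bigl(\sum_k|a_k|^p\bigr)^{q/p}$ (the passage $\mathbb{E}_t\|f_t\|_{A^p_\om}^q\le(\mathbb{E}_t\|f_t\|_{A^p_\om}^p)^{q/p}$ being legitimate since $q<p$), while on the $L^q(\mu)$ side Khinchine from below and the lower bound $|f_{z_k}|\gtrsim\om(S(z_k))^{-1/p}$ on $R_k$ give $\mathbb{E}_t\int|f_t|^q\,d\mu\gtrsim\sum_k|a_k|^q\,\mu(R_k)\,\om(S(z_k))^{-q/p}$. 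The resulting inequality $\sum_k|a_k|^q\,\mu(R_k)\om(R_k)^{-q/p}\lesssim(\sum_k|a_k|^p)^{q/p}$, valid for all $(a_k)$, is by $\ell^{p/q}$--$\ell^{s}$ duality equivalent to $\sum_k\bigl(\mu(R_k)\om(R_k)^{-q/p}\bigr)^{s}<\infty$; since $sq/p=s-1$ this equals $\sum_k\mu(R_k)^s\om(R_k)^{1-s}$, which by (1) and the weighted dyadic maximal theorem is comparable to $\|M_\om(\mu)\|_{L^s_\om}^s$, i.e. condition (iii). I expect the principal difficulty to lie precisely here: controlling the off-diagonal terms in $\mathbb{E}_t\|f_t\|_{A^p_\om}$ for a general $\om\in\DD$ and aligning the lattice masses $\mu(R_k)/\om(R_k)$ with the continuous maximal function; an alternative to the randomization would be a factorization of $A^p_\om$ combined with duality. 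Making this correspondence rigorous and uniform in $0<p,q<\infty$ is exactly what the weighted tent-space theory announced in the abstract is meant to provide.
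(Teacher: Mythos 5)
Your outline is broadly the right shape, and the easy implications match the paper: the Fubini identity, the implication (ii)$\Rightarrow$(i) in part (a) via $|f(\z)|\le N(f)(z)$ and H\"older against $B_\mu$, and the necessity of the $L^\infty$ conditions in (b) and (c) via peak functions are exactly what the paper does. But your tool (2), the local submean-value estimate $|f(\z)|^p\lesssim\om(S(\z))^{-1}\int_{\Delta(\z,r)}|f|^p\om\,dA$, is \emph{false} for general $\om\in\DD$: the hypothesis constrains only $\widehat{\om}$, so $\om$ may vanish identically on a hyperbolic annulus containing $\Delta(\z,r)$ while $\om(S(\z))\asymp\widehat{\om}(|\z|)(1-|\z|)>0$ (e.g.\ $\om$ supported on thin annuli $[1-2^{-n},1-2^{-n}+2^{-n-2}]$), and a peak function at $\z$ then violates the inequality. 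This breaks your box-covering proof of sufficiency in parts (b) and (c), and also the bound $\mu(R_k)\le\om(R_k)\inf_{R_k}M_\om(\mu)$ in your (iii)$\Rightarrow$(ii) step, where the correct right-hand side involves $\om(S(z_k))$, which is not comparable to $\om(R_k)$. The paper avoids local boxes entirely: sufficiency for $q\ge p$ is run on the distribution function of $N(f)$, covering $O_\lambda(f)=\{N(f)>\lambda\}$ by tents over a Whitney decomposition of its boundary projection (Theorem~\ref{Thm:Carleson-Withney}), so that only masses of tents and Carleson squares, i.e.\ $\widehat{\om}$, ever enter; and the passage from $M_\om(\mu)\in L^{p/(p-q)}_\om$ to $B_\mu\in L^{p/(p-q)}_\om$ goes through the tent-space identity $\|f\|_{T^p_q(\nu,\om)}\asymp\|C_{q,\nu}(f)\|_{L^p_\om}$ (Lemma~\ref{le:cond i-v} together with Lemma~\ref{Lemma:cone-integral-nu}), which in turn rests on the duality and stopping-time arguments of Section~\ref{tentspaces}.

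The more serious gap is in the necessity (i)$\Rightarrow$(iii) for $p>q$. Your randomization needs the upper bound $\int_0^1\|\sum_k\e_k(t)a_kf_{z_k}\|_{A^p_\om}^p\,dt\lesssim\sum_k|a_k|^p$, which is precisely the atomic-decomposition estimate that the paper states is \emph{not} available for the class $\DD$; the substitute is $\|S_\lambda(f)\|_{A^p_\om}\lesssim\|f\|_{T^p_2(\{z_k\},\om)}$ (Lemma~\ref{Lemma:operator-tent-bergman}, itself resting on Lemma~\ref{Lemma:cone-integral-nu}), whose right-hand side is the tent-space norm $\bigl(\int_\D(\sum_{z_k\in\Gamma(\z)}|a_k|^2)^{p/2}\om\,dA\bigr)^{1/p}$ and not $(\sum_k|a_k|^p)^{1/p}$. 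With that bound the resulting inequality is no longer amenable to plain $\ell^{p/q}$--$\ell^{p/(p-q)}$ duality over a fixed lattice; in the proof of Theorem~\ref{Thm:Calesonq<p} the points are instead taken to be the vertices of the \emph{maximal} dyadic tents $T^k_j$ with $\mu(T)>2^k\om(T)$, the coefficients are chosen as $b_{k,j}^q=2^{k(r'-1)}$, and the disjointness of the sets $G(T)=T\setminus\cup\{T'\subset T\}$ converts both sides into sums over the level sets of $\widetilde{M}^d_\om(\mu)$, which is what actually produces $\|\widetilde{M}^d_\om(\mu)\|_{L^{p/(p-q)}_\om}<\infty$. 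Your closing identification $\sum_k\mu(R_k)^s\om(R_k)^{1-s}\asymp\|M_\om(\mu)\|_{L^s_\om}^s$ is likewise unproved and, with $\om(R_k)$ rather than $\om(S(z_k))$, false for the same reason as above. In short, the two places you flag as hard are exactly where the weighted tent-space machinery (duality, stopping time, and Lemmas~\ref{Lemma:cone-integral-nu} and~\ref{Lemma:operator-tent-bergman}) is indispensable, and the proposal does not supply a workable replacement for it.
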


In the case $q\ge p$ we use a method from the theory of Hardy spaces to control the distribution function $\mu(\{z:N(f)(z)>\lambda\})$ of $N(f)(z)=\sup_{\z\in\G(z)}|f(\z)|$ by
    $$
    \int_{\{z:N(f)(z)>\lambda\}}\left(M_{\om,q/p}(\mu)(\z)\right)^\frac{p}{q}\om(\z)\,dA(\z),\quad 0<\lambda<\infty.
    $$
The existing literature~\cite{OC,Lu93,PP} suggests that for $p>q$ the most natural test functions are obtained by an atomic decomposition~\cite{Ro:de}. However, this approach does not seem to be adequate for the class $\DD$. As in the case $q\ge p$, methods from harmonic analysis turn out to be the appropriate ones. To some extent this is natural because a weighted Bergman space $A^p_\om$ induced by $\om\in\DD$ may lie essentially much closer to the Hardy space $H^p$ than any standard Bergman space $A^p_\a$~\cite{PelRat}. Luecking~\cite{Lu90} employed the theory of tent spaces, introduced by Coifman, Meyer and Stein~\cite{CMS} and further considered by Cohn and Verbitsky~\cite{CV}, to study the inequality~\eqref{eq:main} for Hardy spaces. In this setting one optimizes the use of the maximal and square area functions~\cite{BGS,FefSt}.
We will build an analogue of this theory for Bergman spaces and show that one can actually treat~\eqref{eq:main} for the Hardy and Bergman spaces simultaneously.

For $0<q<\infty$ and a positive Borel measure $\nu$ on $\D$, finite on compact sets, denote
    $
    A^q_{q,\nu}(f)(\z)=\int_{\Gamma(\z)}|f(z)|^q\,d\nu(z)
    $
and $A_{\infty,\nu}(f)(\z)=\nu\textrm{-ess}\sup_{z\in\Gamma(\z)}|f(z)|$. For $0<p<\infty$, $0<q\le\infty$ and $\om\in\DD$, the tent space $T^p_q(\nu,\om)$ consists of the $\nu$-equivalence classes of $\nu$-measurable functions $f$ such that $\|f\|_{T^p_q(\nu,\om)}=\|A_{q,\nu}(f)\|_{L^p_\om}$ is finite. For $0<q<\infty$, define
    $$
    C^q_{q,\nu}(f)(\z)=\sup_{a\in \Gamma(\z)}\frac{1}{\om(T(a))}\int_{T(a)}|f(z)|^q\om(T(z))\,d\nu(z).
    $$
A quasi-norm in the tent space $T^\infty_q(\nu,\om)$ is defined by $\|f\|_{T^\infty_q(\nu,\om)}=\|C_{q,\nu}(f)\|_{L^\infty}$.

The dual of $T^p_q(\nu,\om)$ is identified with $T^{p'}_{q'}(\nu,\om)$ when
$1\le p,q,<\infty$ and $p+q>2$. In the proof, a stopping time involving $A_{q,\nu}(f)$ and $C_{q,\nu}(f)$
is a fundamental step. The outcome of this reasoning is used to give an alternate proof of Theorem~\ref{Theorem:CarlesonMeasures}(b). It also plays a role in the proof of the factorization
    $$
    T^p_q(\nu,\om)=T^p_\infty(\nu,\om)\cdot T^\infty_q(\nu,\om),\quad 0<p,q<\infty,
    $$
which in turn is applied to obtain an atomic decomposition of functions in $T^p_q(\nu,\om)$ when $p\le q$. We will also show that $\left(T^p_q(\nu,\om)\right)^\star\simeq T^{p'}_\infty(\nu,\om)$ when $q<1<p$
and~$\nu$ is a discrete measure induced by a separated sequence. By pulling all these results together, we will achieve the statement of Theorem~\ref{Theorem:CarlesonMeasures}.

The Littlewood-Paley formula implies that for the classical Bergman spaces the question of when the differentiation operator $D^{(n)}(f)=f^{(n)}$ from $A^p_\a$ to $L^q(\mu)$ is bounded can be answered once the $q$-Carleson measures for $A^p_\a$ are characterized. However, this approach does not work for $\om\in\DD$ because an analogue of the Littlewood-Paley formula does not exist in this context~\cite[Proposition~4.3]{PelRat}.

\begin{theorem}\label{Theorem:DifferentiationOperator}
Let $0<p,q<\infty$, $\om\in\DD$, $n\in\N$ and $\mu$ be a positive Borel measure on~$\D$. Further, let $dh(z)=dA(z)/(1-|z|^2)^2$ denote the hyperbolic measure.
\begin{itemize}
\item[\rm(a)] If $p\ge q$, $D^{(n)}:A^p_\om\to L^q(\mu)$ is bounded if and only if, for any fixed $r\in(0,1)$, the function
    $$
    \Phi_\mu(z)=\frac{\mu(\Delta(z,r))}{\om(S(z))(1-|z|)^{qn}},\quad z\in\D,
    $$
belongs to
    \begin{enumerate}
    \item[\rm(i)] $T^\frac{p}{p-q}_{\frac{2}{2-q}}\left(h,\om\right)$, if $q<\min\{2,p\}$;
    \item[\rm(ii)] $T^\infty_{\frac{2}{2-p}}\left(h,\om\right)$, if $q=p<2$;
    \item[\rm(iii)] $T^{\frac{p}{p-q}}_\infty\left(h,\om\right)$, if $2\le q<p$.
    \end{enumerate}
\item[\rm(b)] If either $q>p$ or $2\le q=p$, the following conditions are equivalent:
\begin{enumerate}
\item[\rm(i)] $D^{(n)}:A^p_\om\to L^q(\mu)$ is bounded;
\item[\rm(ii)] $\displaystyle z\mapsto\frac{\mu\left(S(z)\right)}{(\om\left(S(z)
    \right))^\frac{q}{p}(1-|z|)^{nq}}$ belongs to $L^\infty$;
\item[\rm(iii)] $\displaystyle z\mapsto\frac{\mu\left(\Delta(z,r)\right)}{(\om\left(S(z)
    \right))^\frac{q}{p}(1-|z|)^{nq}}$ belongs to $L^\infty$ for any fixed $r\in(0,1)$.
\end{enumerate}
\end{itemize}
\end{theorem}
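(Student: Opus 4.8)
The plan is to eliminate the derivative by the substitution $g=(1-|\cdot|)^n f^{(n)}$ and to trade the norm of $A^p_\om$ for an area–function (tent space) expression, thereby recasting the boundedness of $D^{(n)}$ as a tent space embedding to which the duality and factorization theory built above applies. Writing $d\mu_n(z)=(1-|z|)^{-qn}\,d\mu(z)$, one has $\int_\D|f^{(n)}|^q\,d\mu=\int_\D|g|^q\,d\mu_n$, and $\Phi_\mu$ is comparable to the local $\om$–density $z\mapsto\mu_n(\Delta(z,r))/\om(S(z))$ of $\mu_n$; the passage between the formulations with $S(z)$ and with $\Delta(z,r)$ in (b)(ii)--(iii) is the standard comparison $\om(S(z))\asymp\om(S(a))$ and $\mu(S(z))\asymp\mu(\Delta(z,r'))$ valid for $\om\in\DD$. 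The decisive analytic input is the area–function identity $\|f\|_{A^p_\om}\asymp\sum_{k=0}^{n-1}|f^{(k)}(0)|+\|g\|_{T^p_2(h,\om)}$; although the pointwise $L^p$ Littlewood--Paley formula fails for $\om\in\DD$, its square–function form survives, and this is the substitute I would establish and exploit.

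In the regime of part (b) the characterization is a single Carleson condition and no duality is needed. For sufficiency I would use the pointwise estimate $|f^{(n)}(z)|^q\lesssim (1-|z|)^{-qn-2}\int_{\Delta(z,r)}|f(\z)|^q\,dA(\z)$, integrate against $\mu$, and apply Fubini to bound $\int_\D|f^{(n)}|^q\,d\mu$ by $\int_\D|f|^q\,d\nu_n$ with $d\nu_n(\z)=\mu_n(\Delta(\z,r))\,dh(\z)$; since $\nu_n(\Delta(z,r))\asymp\om(S(z))\Phi_\mu(z)$, boundedness reduces to $\nu_n$ being a $q$-Carleson measure for $A^p_\om$, which by Theorem~\ref{Theorem:CarlesonMeasures}(c) (for $q>p$) or (b) (for $q=p$) is exactly the $L^\infty$ condition (ii). For necessity I would test on a family $\{f_a\}$ adapted to $\DD$ with $|f_a^{(n)}(z)|\asymp(1-|a|)^{-n}|f_a(z)|$ on $\Delta(a,r)$ and $\|f_a\|_{A^p_\om}^p\asymp\om(S(a))$, so that $\int|f_a^{(n)}|^q\,d\mu\gtrsim(1-|a|)^{-qn}\mu(\Delta(a,r))$, and dividing by $\|f_a\|_{A^p_\om}^q$ and taking the supremum over $a$ yields (iii). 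The corner $q=p\ge2$ is where both exponents of the part (a) spaces saturate ($T^\infty_\infty=L^\infty$), which is why it belongs here.

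For $p\ge q$ the substitution turns the problem into the tent space embedding $T^p_2(h,\om)\hookrightarrow L^q(\mu_n)$, which I would settle by the Cohn--Verbitsky duality scheme adapted to $T^p_q(h,\om)$. Setting $P=p/q$ and $Q=2/q$, the Hölder conjugates are $P'=p/(p-q)$ and $Q'=2/(2-q)$; pairing $\int|g|^q\,d\mu_n$ against suitable test functions and invoking the duality $(T^{P}_{Q}(h,\om))^\star\simeq T^{P'}_{Q'}(h,\om)$ (valid here since $P\ge1$, $Q\ge1$ and $P+Q>2$ when $q<\min\{2,p\}$) shows that the embedding holds if and only if $\Phi_\mu\in T^{p/(p-q)}_{2/(2-q)}(h,\om)$, giving (a)(i). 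Letting the outer exponent degenerate as $q\uparrow p$ produces $T^\infty_{2/(2-p)}(h,\om)$ and case (ii), while for $2\le q<p$ the inner exponent $Q=2/q\le1$ forces one to replace the area functional by the Carleson functional $C_{q,h}$ and the space $T^\infty_q$, so that the factorization $T^{p/q}_q=T^{p/q}_\infty\cdot T^\infty_q$ yields the condition $\Phi_\mu\in T^{p/(p-q)}_\infty(h,\om)$ of case (iii).

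The main obstacle is twofold. First, the area–function Littlewood--Paley equivalence must be proved in $\DD$ despite the failure of the pointwise version; I expect this to rest on the tent space results together with the regularity of $\DD$ weights. Second, and more seriously, in the necessity direction of part (a) the function $g=(1-|\cdot|)^n f^{(n)}$ does not range over all of $T^p_2(h,\om)$ but only over the image of the analytic subspace, so the full tent space condition must be forced from this restricted class; a priori the restricted embedding could be strictly weaker than the full one. To close this gap I would push the atomic decomposition of $T^p_q(h,\om)$ coming from the factorization back through an antidifferentiation, realizing near-extremal tent atoms as $n$-th derivatives of analytic functions with controlled $A^p_\om$ norm. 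The sharp saturation at $q=2$, where the area functional and the Carleson functional interchange, will require the most care.
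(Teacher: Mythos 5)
Your architecture for part (a) (substituting $g=(1-|\cdot|)^{n}f^{(n)}$, invoking the square-function equivalence of \cite[Theorem~4.2]{PelRat}, and dualizing the resulting tent-space embedding) matches the paper's, but there are two genuine gaps. The first is a step that fails: in part (b) you propose to handle the corner $2\le q=p$ by the same Carleson-measure reduction as $q>p$, i.e.\ to deduce from (iii) that $d\nu_n(\z)=\mu_n(\Delta(\z,r))\,dh(\z)$ is a $p$-Carleson measure for $A^p_\om$, hence $M_\om(\nu_n)\in L^\infty$. This requires $\nu_n(S(a))\lesssim\om(S(a))$, and the pointwise bound $\mu(\Delta(\z,r))\lesssim\om(S(\z))(1-|\z|)^{np}$ only gives
$\nu_n(S(a))\lesssim\int_{S(a)}\om(S(\z))\,dh(\z)\asymp|I_a|\int_{|a|}^1\widehat{\om}(t)\,\frac{dt}{1-t}$,
which is \emph{not} $\lesssim\om(S(a))\asymp|I_a|\widehat{\om}(|a|)$ for general $\om\in\DD$ (take $\widehat{\om}(t)=\bigl(\log\frac{e}{1-t}\bigr)^{-1}$: the integral diverges). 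The paper's Proposition~\ref{Prop:p<q-sufficient} survives this local-to-global summation only because it inserts an intermediate exponent $s\in(p,q)$, so that $\int_{|a|}^1\widehat{\om}(t)^{s/p}(1-t)^{s/p-2}\,dt$ converges with the right bound; that device is unavailable when $q=p$. For $2\le q=p$ the paper uses an entirely different mechanism: the Riesz/Green formula $|f'(z)|^q(1-|z|^2)^q\lesssim\int_{\Delta(z,r)}\triangle|f|^q(\z)\,dA(\z)$ (valid since $\triangle|f|^q$ is subharmonic for $q\ge2$), Fubini against $\mu$, and the identity $\int_\D\triangle|f|^p(\z)\om(S(\z))\,dA(\z)\asymp\|f\|_{A^p_\om}^p$, which needs only the pointwise condition (iii) and incurs no summation loss. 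Without this (or some substitute) your proof of (b) does not cover $2\le q=p$.

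The second gap is in the necessity direction of part (a), which you correctly flag as the serious obstacle but whose resolution you leave at the level of ``antidifferentiate near-extremal tent atoms''. It is not clear that a $T^{p/q}_{2/q}(h,\om)$-atom can be realized, even approximately, as $(1-|\cdot|)^{n}f^{(n)}$ for an analytic $f$ with controlled $A^p_\om$-norm, and no construction is indicated. The paper instead builds test functions with the operator $S_\la(f)(z)=\sum_kf(z_k)\bigl(\frac{1-|z_k|}{1-\overline{z}_kz}\bigr)^\la$ over a separated lattice (Lemma~\ref{Lemma:operator-tent-bergman}), randomizes the coefficients and applies Khinchine's inequality to extract the discrete square-sum estimate \eqref{pippeli}, and then converts that estimate into membership of $\Phi_\mu$ in the appropriate tent space via the \emph{discrete} dualities: Theorem~\ref{Thm:tent-spaces-duality} when $2/q>1$, but Proposition~\ref{Proposition:duality-sequence-tent} (the $q<1$ duality $(T^p_q)^\star\simeq T^{p'}_\infty$ for separated sequences) when $2<q<p$, since there the inner exponent $2/q$ drops below $1$ and the Benedek--Panzone duality no longer applies. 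Your outline invokes neither the randomization nor this sub-unit-exponent duality, and both are essential to reach conditions (a)(i)--(iii) in full strength.
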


\section{Preliminary results}

For $0<p<\infty$ and a weight $\omega$, the weighted Bergman
space $A^p_\omega$ consists of analytic functions $f$ in the unit disc $\D$ for
which
    $$
    \|f\|_{A^p_\omega}^p=\int_\D|f(z)|^p\omega(z)\,dA(z)<\infty,
    $$
where $dA(z)=\frac{dx\,dy}{\pi}$ is the normalized
Lebesgue area measure on $\D$. We write
$A^p_\alpha$ for the classical weighted
Bergman space induced by
the standard weight $\omega(z)=(1-|z|^2)^\alpha$. A weight $\om$ is radial if $\omega(z)=\omega(|z|)$ for all $z\in\D$.

The Carleson square $S(I)$ based on an
interval $I\subset\T$ is the set $S(I)=\{re^{it}\in\D:\,e^{it}\in I,\,
1-|I|\le r<1\}$, where $|E|$ denotes the Lebesgue measure of $E\subset\T$. We associate to
each $a\in\D\setminus\{0\}$ the interval
$I_a=\{e^{i\t}:|\arg(a e^{-i\t})|\le\frac{1-|a|}{2}\}$, and denote
$S(a)=S(I_a)$.

Recall that $\DD$ denotes the class of radial weights such that $\widehat{\om}(r)=\int_r^1\om(s)\,ds$ is doubling, that is, there exists $C=C(\om)\ge1$ such that $\widehat{\om}(r)\le C\widehat{\om}(\frac{1+r}{2})$ for all $0\le r<1$. The following lemma contains basic properties of these weights and will be frequently used in the sequel. The proof is elementary and therefore omitted.

\begin{lemma}\label{Lemma:replacement-Lemma1.1-2.}
Let $\om$ be a radial weight. Then the following conditions are equivalent:
\begin{itemize}
\item[\rm(i)] $\om\in\DD$;
\item[\rm(ii)] There exist $C=C(\om)>0$ and $\b_0=\b_0(\om)>0$ such that
    \begin{equation}\label{Eq:replacement-Lemma1.1}
    \begin{split}
    \widehat{\om}(r)\le C\left(\frac{1-r}{1-t}\right)^{\b}\widehat{\om}(t),\quad 0\le r\le t<1,
    \end{split}
    \end{equation}
for all $\b\ge\b_0$;
\item[\rm(iii)] There exist $C=C(\om)>0$ and $\gamma_0=\gamma_0(\om)>0$ such that
    \begin{equation}\label{Eq:replacement-Lemma1.2}
    \begin{split}
    \int_0^t\left(\frac{1-t}{1-s}\right)^\g\om(s)\,ds
    \le C\widehat{\om}(t),\quad 0\le t<1,
    \end{split}
    \end{equation}
for all $\g\ge\g_0$.
\end{itemize}
Moreover, if $\om\in\DD$, there exists $\lambda_0=\lambda_0(\om)\ge0$ such that
    $$
    \int_\D\frac{\om(z)}{|1-\overline{\z}z|^{\lambda+1}}\,dA(z)\asymp\frac{\widehat{\om}(\zeta)}{(1-|\z|)^\lambda},\quad \z\in\D,
    $$
for all $\lambda>\lambda_0$, and
    $$
    \om(S(z))\asymp\om(T(z))\asymp\om^\star(z),\quad |z|\to1^-,
    $$
where
    $$
    \omega^\star(z)=\int_{|z|}^1\omega(s)\log\frac{s}{|z|}s\,ds,\quad z\in\D\setminus\{0\}.
    $$
\end{lemma}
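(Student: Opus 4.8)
The plan is to establish the cycle (i) $\Rightarrow$ (ii) $\Rightarrow$ (iii) $\Rightarrow$ (i) and then to derive the two ``moreover'' estimates from these equivalences. For (i) $\Rightarrow$ (ii) I would iterate the doubling inequality: since $r\mapsto\frac{1+r}{2}$ halves the distance to the boundary, $n$ applications of (i) give $\widehat{\om}(r)\le C^n\widehat{\om}(r_n)$ with $1-r_n=2^{-n}(1-r)$; for $0\le r\le t<1$ I would pick $n\asymp\log_2\frac{1-r}{1-t}$ so that $r_n\asymp t$, and the monotonicity of $\widehat{\om}$ then yields \eqref{Eq:replacement-Lemma1.1} for every $\b\ge\b_0:=\log_2C$ (the bound only weakens as $\b$ grows, since $\frac{1-r}{1-t}\ge1$). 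The reverse implication (ii) $\Rightarrow$ (i) is immediate on taking $t=\frac{1+r}{2}$, which makes $\frac{1-r}{1-t}=2$. For (ii) $\Rightarrow$ (iii) I would split $[0,t]$ dyadically along $r_k$ with $1-r_k=2^k(1-t)$; on each ring $\left(\frac{1-t}{1-s}\right)^\g\le2^{-k\g}$, while \eqref{Eq:replacement-Lemma1.1} gives $\int_{r_{k+1}}^{r_k}\om\lesssim2^{(k+1)\b}\widehat{\om}(t)$, so the geometric series $\sum_k2^{k(\b-\g)}$ converges once $\g>\b$, fixing $\g_0$. Finally, for (iii) $\Rightarrow$ (i) I would set $t=\frac{1+r}{2}$ in \eqref{Eq:replacement-Lemma1.2}; on $[r,t]$ the factor $\frac{1-t}{1-s}$ exceeds $\frac12$, whence $\int_r^t\om\lesssim\widehat{\om}(t)$, and $\widehat{\om}(r)=\widehat{\om}(t)+\int_r^t\om$ recovers doubling.

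For the integral estimate I would first integrate in the argument of $z=\r e^{i\theta}$ and use the classical asymptotic $\int_0^{2\pi}|1-we^{i\theta}|^{-(\lambda+1)}\,d\theta\asymp(1-w)^{-\lambda}$, valid for $\lambda>0$, to reduce the claim to the radial estimate $\int_0^1\frac{\om(\r)}{(1-|\z|\r)^\lambda}\,d\r\asymp\frac{\widehat{\om}(\z)}{(1-|\z|)^\lambda}$. Splitting at $\r=|\z|$, on $[|\z|,1]$ one has $1-|\z|\r\asymp1-|\z|$, so this piece equals $\frac{\widehat{\om}(\z)}{(1-|\z|)^\lambda}$ up to constants and already delivers the lower bound with no further hypothesis; on $[0,|\z|]$ one has $1-|\z|\r\asymp1-\r$, and the needed upper bound $\int_0^{|\z|}\left(\frac{1-|\z|}{1-\r}\right)^\lambda\om(\r)\,d\r\lesssim\widehat{\om}(|\z|)$ is exactly \eqref{Eq:replacement-Lemma1.2} with $t=|\z|$ and $\g=\lambda$. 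Hence the estimate holds for all $\lambda\ge\lambda_0:=\g_0$.

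For the comparability $\om(S(z))\asymp\om(T(z))\asymp\om^\star(z)$ I would evaluate each as a radial integral. From $|I_z|\asymp1-|z|$ one gets $\om(S(z))\asymp(1-|z|)\widehat{\om}(|z|)$ directly; the description $T(z)=\{\r e^{i\phi}:\r>|z|,\ |\phi-\arg z|<\tfrac12(1-|z|/\r)\}$ gives $\om(T(z))\asymp\int_{|z|}^1\om(\r)(\r-|z|)\,d\r$, and since $\log\frac{s}{|z|}\asymp s-|z|$ as $|z|\to1^-$, the same integral governs $\om^\star(z)$. The bound $\int_{|z|}^1\om(\r)(\r-|z|)\,d\r\le(1-|z|)\widehat{\om}(|z|)$ is trivial, and for the reverse I would integrate only over $[\frac{1+|z|}{2},1]$, where $\r-|z|\gtrsim1-|z|$, and apply doubling in the form $\widehat{\om}(\frac{1+|z|}{2})\gtrsim\widehat{\om}(|z|)$; thus all three are comparable to $(1-|z|)\widehat{\om}(|z|)$.

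Since every step is elementary I do not anticipate a real obstacle; the only points needing care are the bookkeeping that makes the geometric series in (ii) $\Rightarrow$ (iii) converge---and thereby pins down the thresholds $\b_0,\g_0,\lambda_0$---and the observation that the delicate half of the integral estimate is nothing but \eqref{Eq:replacement-Lemma1.2}, so that the doubling hypothesis is invoked exactly where the lower bounds for $\om(T(z))$ and $\om^\star(z)$ actually demand it.
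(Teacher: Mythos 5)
Your proof is correct, and the paper itself omits the proof of this lemma as ``elementary,'' so there is nothing to compare against: your argument (iterating the doubling inequality to get the power bound, dyadic rings for (ii)$\Rightarrow$(iii), taking $t=\frac{1+r}{2}$ for the converses, the classical angular estimate $\int_0^{2\pi}|1-we^{i\theta}|^{-\lambda-1}\,d\theta\asymp(1-|w|)^{-\lambda}$ split at $\r=|\z|$, and the radial computation of $\om(S(z))$, $\om(T(z))$, $\om^\star(z)$) is exactly the standard one the authors have in mind. All thresholds $\b_0=\log_2 C$, $\g_0>\b_0$, $\lambda_0=\g_0$ are correctly identified and each step checks out.
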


The classes $\I$ and $\R$ of positive weights considered in \cite{PelRat} are contained in $\DD$, and therefore Lemma~\ref{Lemma:replacement-Lemma1.1-2.} contains most of the assertions in Lemmas~1.1, 1.2, 1.6 and 2.3 from \cite{PelRat} as special cases.

The following result plays a fundamental role in this study. It can be proved also by employing the method used by Sawyer~\cite{Sawyer}.

\begin{theorem}\label{co:maxbou}
Let $0<p\le q<\infty$ and $0<\alpha<\infty$ such that $p\alpha>1$.
Let $\om\in\DD$ and $\mu$ be a positive Borel measure on $\D$. Then
$[M_{\om}((\cdot)^{\frac{1}{\alpha}})]^{\alpha}:L^p_\omega\to
L^q(\mu)$ is bounded if and only if $M_{\om,q/p}(\mu)\in L^\infty$.
Moreover,
    $$
    \|[M_{\om}((\cdot)^{\frac{1}{\alpha}})]^{\alpha}\|^q_{\left(L^p_\om,L^q(\mu)\right)}\asymp\|M_{\om,q/p}(\mu)\|_{L^\infty}.
    $$
\end{theorem}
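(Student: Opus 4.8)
The plan is to reduce the statement to a two-weight strong-type inequality for the weighted maximal operator $M_\om$ itself, and then establish that inequality through a Vitali/Calder\'on--Zygmund covering argument combined with the layer-cake identity. The substitution $g=|f|^{1/\alpha}$ sets up a bijection between the nonnegative functions of $L^p_\om$ and those of $L^{\alpha p}_\om$, under which $\|[M_\om(|f|^{1/\alpha})]^\alpha\|_{L^q(\mu)}=\|M_\om g\|_{L^{\alpha q}(\mu)}^{\alpha}$ and $\|f\|_{L^p_\om}=\|g\|_{L^{\alpha p}_\om}^{\alpha}$, while $[M_\om((\cdot)^{1/\alpha})]^\alpha$ is positively homogeneous and depends only on $|f|$. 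Writing $P=\alpha p$ and $Q=\alpha q$, the asserted boundedness is thus equivalent to $M_\om\colon L^P_\om\to L^Q(\mu)$ being bounded, where $1<P\le Q<\infty$ by the hypotheses $p\le q$ and $p\alpha>1$; moreover $\|[M_\om((\cdot)^{1/\alpha})]^\alpha\|_{(L^p_\om,L^q(\mu))}^q=\|M_\om\|_{(L^P_\om,L^Q(\mu))}^{Q}$, and since $Q/P=q/p$ the claimed equivalence becomes $\|M_\om\|_{(L^P_\om,L^Q(\mu))}^Q\asymp\|M_{\om,Q/P}(\mu)\|_{L^\infty}$.

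The necessity is immediate by testing on Carleson squares: for the indicator $\chi_{S(a)}$ one has $M_\om(\chi_{S(a)})\ge1$ throughout $S(a)$ (take the square $S(a)$ itself in the supremum), so boundedness forces $\mu(S(a))\le\|M_\om\|_{(L^P_\om,L^Q(\mu))}^{Q}\,\om(S(a))^{Q/P}$, and taking the supremum over $a$ gives $\|M_{\om,Q/P}(\mu)\|_{L^\infty}\le\|M_\om\|_{(L^P_\om,L^Q(\mu))}^{Q}$.

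For the sufficiency, set $K=\|M_{\om,Q/P}(\mu)\|_{L^\infty}$, so that $\mu(S(a))\le K\,\om(S(a))^{Q/P}$ for every $a$, and normalize $\int_\D g^P\om\,dA=1$. Let $\Omega_\lambda=\{M_\om g>\lambda\}$. Each $z\in\Omega_\lambda$ lies in some Carleson square on which the $\om$-average of $g$ exceeds $\lambda$, and any such square is itself contained in $\Omega_\lambda$; a Vitali-type selection among these squares yields a cover $\Omega_\lambda=\bigcup_j S(a_j)$ of bounded overlap $N$ (independent of $\lambda$) with $\om(S(a_j))^{-1}\int_{S(a_j)}g\,\om\,dA>\lambda$. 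H\"older's inequality on a single $S(a_j)$, using $\int g^P\om\,dA=1$, gives the scale bound $\om(S(a_j))<\lambda^{-P}$, which is the crucial gain since $Q\ge P$: it allows $\om(S(a_j))^{Q/P}\le\lambda^{-(Q-P)}\om(S(a_j))$. Combining the square condition, $\om(S(a_j))<\lambda^{-1}\int_{S(a_j)}g\,\om\,dA$, and bounded overlap,
\[
\mu(\Omega_\lambda)\le K\sum_j\om(S(a_j))^{Q/P}\le KN\,\lambda^{-(Q-P)-1}\int_{\Omega_\lambda}g\,\om\,dA .
\]
Inserting this into $\int_\D(M_\om g)^Q\,d\mu=Q\int_0^\infty\lambda^{Q-1}\mu(\Omega_\lambda)\,d\lambda$ and interchanging the order of integration (legitimate because $P>1$ makes $\lambda^{P-2}$ integrable at the origin) collapses the $\lambda$-integral, leaving
\[
\int_\D(M_\om g)^Q\,d\mu\lesssim K\int_\D g\,(M_\om g)^{P-1}\,\om\,dA .
\]
A final H\"older inequality with exponents $P$ and $P'$, together with the boundedness of $M_\om$ on $L^P_\om$ (the weighted maximal theorem, valid for $P>1$ since $\om\,dA$ is doubling on Carleson squares when $\om\in\DD$), bounds the right-hand side by a constant multiple of $K$, which is the desired estimate.

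The main obstacle is the covering step: Carleson squares anchored at the boundary may contain a common point in unbounded number, so extracting a subfamily of genuinely bounded overlap that still covers $\Omega_\lambda$, while using the $\DD$-doubling of $z\mapsto\om(S(z))$ to control the sizes $\om(S(a_j))$, is the technical heart of the argument; it is moreover exactly the ingredient underlying the $L^P_\om$-boundedness of $M_\om$ invoked at the end. The remaining manipulations are routine bookkeeping with H\"older's inequality and the layer-cake identity.
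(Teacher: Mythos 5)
Your proposal is correct in substance, but it takes a more self-contained route than the paper on the sufficiency half. The paper disposes of sufficiency in one line by citing the proof of \cite[Corollary~2.2]{PelRat}, and spends its effort on the converse, which it proves by applying the assumed boundedness to $M_\om(|f|^p\chi_S)$, using Chebyshev's inequality at the level $\lambda<\frac{1}{\om(S)}\int_S|f|^p\om\,dA$, and finally specializing to $f\equiv1$; your necessity argument, testing directly on $\chi_{S(a)}$ after the reduction $g=|f|^{1/\alpha}$, $P=\alpha p$, $Q=\alpha q$, is the same idea in streamlined form. For sufficiency you instead give a full two-weight argument (weak-type estimate on $\Omega_\lambda$ via a covering of Carleson squares, the layer-cake identity, and a final H\"older step resting on the $L^P_\om$-boundedness of $M_\om$); this is essentially the Sawyer-style proof that the paper explicitly mentions as an alternative route, and it has the advantage of making the theorem independent of the external reference, at the cost of reproving the maximal theorem machinery that \cite{PelRat} already packages.

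One point in your sketch should be tightened. A family of Carleson squares covering $\Omega_\lambda$ does not in general admit a subfamily that simultaneously covers $\Omega_\lambda$, has bounded overlap, and retains the average condition $\om(S(a_j))^{-1}\int_{S(a_j)}g\om\,dA>\lambda$ on each member; the standard fix is to run Vitali on the base arcs to get \emph{disjoint} squares $S(I_j)$ satisfying the average condition whose dilates $S(5I_j)$ cover $\Omega_\lambda$, and then use $\om(S(5I_j))\lesssim\om(S(I_j))$, which follows from finitely many applications of the defining inequality of $\DD$ together with $\om(S(I))\asymp|I|\widehat\om(1-|I|)$. With that formulation the chain $\mu(\Omega_\lambda)\le\sum_j\mu(S(5I_j))\le K\sum_j\om(S(5I_j))^{Q/P}\lesssim K\lambda^{-(Q-P)-1}\int_{\Omega_\lambda}g\om\,dA$ is airtight, and the rest of your computation (including the role of $P>1$ in both the integrability of $\lambda^{P-2}$ at the origin and the maximal theorem) goes through as written.
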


\begin{proof} The proof of \cite[Corollary~2.2]{PelRat} shows that $[M_{\om}((\cdot)^{\frac{1}{\alpha}})]^{\alpha}:L^p_\omega\to
L^q(\mu)$ is bounded if $M_{\om,q/p}(\mu)\in L^\infty$, and the operator norm has the claimed upper bound. To see the converse, assume that the operator is bounded. Let $S$ be a Carleson square and $f\in L^p_\omega$ such that $\int_S |f|^p \om dA>0$. If $\lambda<\frac{1}{\om(S)} \int_S |f|^p \om dA$,
then $S\subset \left\{z: M_\om(|f|^p\chi_S)(z)>\lambda \right\}$, and by denoting $\phi^{1/\alpha}=|f|^p$, we obtain
    \begin{equation*}
    \begin{split}
    \mu(S) &\le \mu\left( \left\{z: M^{\alpha}_\om(\phi^{1/\alpha}\chi_S)(z)>\lambda^{\alpha} \right\} \right)
    \le\frac{1}{\lambda^{q\alpha}}\int_{\D} M^{q\alpha}_\om(\phi^{1/\alpha}\chi_S)(z)\,d\mu(z)\\
    &\le\frac{\|[M_{\om}((\cdot)^{\frac{1}{\alpha}})]^{\alpha}\|^q_{\left(L^p_\om,L^q(\mu)\right)}}{\lambda^{q\alpha}}
    \|\varphi\chi_S\|_{L^p_\om}^q\\
    &=\frac{\|[M_{\om}((\cdot)^{\frac{1}{\alpha}})]^{\alpha}\|^q_{\left(L^p_\om,L^q(\mu)\right)}}{\lambda^{q\alpha}}
    \left(\int_{S}|f(z)|^{\a p^2}\,\om(z)dA(z)\right)^{q/p}.
    \end{split}
    \end{equation*}
By letting $\lambda \to  \frac{1}{\om(S)} \int_S |f| \om dA$, we deduce
    \begin{equation*}
    \begin{split}
    &\mu(S)\left(\frac{1}{\om(S)} \int_S |f(z)|^p\om(z)\,dA(z)\right)^{q\alpha}\\
    &\quad \le\|[M_{\om}((\cdot)^{\frac{1}{\alpha}})]^{\alpha}\|^q_{\left(L^p_\om,L^q(\mu)\right)}
    \left(\int_{S} |f(z)|^{\a p^2}(z)\om(z)\,dA(z)\right)^{q/p},\quad f\in L^p_\om,
    \end{split}
    \end{equation*}
and the assertion follows by choosing $f\equiv1$.
\end{proof}

\section{Tent spaces}\label{tentspaces}

We defined the tent space $T^p_q(\nu,\om)$ by using the lenses $\Gamma(\z)$ that induce the tents $T(z)$ in a natural manner. One could certainly use different kind of non-tangential approach regions and they would induce the same spaces. In particular, the aperture does not play role in the definition. We will come back to this matter more rigorously later. To make things more precise, we define for each $0<\alpha<\pi$, the lens
    \begin{equation*}
    \Gamma_\alpha(\z)=\left\{z\in \D:\,|\t-\arg
    z|<\alpha\left(1-\frac{|z|}{r}\right)\right\},\quad
    \z=re^{i\theta}\in\overline{\D}\setminus\{0\},
    \end{equation*}
and the tent $T_\alpha(z)=\left\{\z\in\D:\,z\in\Gamma_\alpha(\z)\right\}$. If $\alpha=\frac{1}{2}$, we write $\Gamma(\z)$ and $T(z)$ as before. Moreover, we set $\om(T_\a(0))=\lim_{r\to0^+}\om(T_\a(r))$ to make $\om(T_\a(z))$ well defined and continuous in the whole $\D$.

\subsection{Duality - Part 1}

In this section we prove the most natural duality results for the tent spaces. The result and its proof are useful for our purposes.

\begin{theorem}\label{Thm:tent-spaces-duality}
Let $1\le p,q<\infty$ with $p+q\ne2$, $\om\in\DD$ and let $\nu$ be a positive Borel measure on~$\D$, finite on compact sets of $\D$. Then the dual of $T^p_q(\nu,\om)$ can be identified with $T^{p'}_{q'}(\nu,\om)$ (up to an equivalence of norms) under the pairing
    \begin{equation}\label{Eq:duality-pairing-tent-spaces}
    \langle f,g \rangle_{T^2_2(\nu,\om)}=\int_\D f(z)\overline{g(z)}\om(T(z))\,d\nu(z).
    \end{equation}
\end{theorem}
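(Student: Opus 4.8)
The plan is to establish the two continuous inclusions $T^{p'}_{q'}(\nu,\om)\hookrightarrow(T^p_q(\nu,\om))^\star$ and $(T^p_q(\nu,\om))^\star\hookrightarrow T^{p'}_{q'}(\nu,\om)$ separately, with matching norm bounds, so that the pairing \eqref{Eq:duality-pairing-tent-spaces} implements an isomorphism up to constants. The backbone of the whole argument is the elementary Fubini identity
\[
\int_\D H(z)\,\om(T(z))\,d\nu(z)=\int_\D\left(\int_{\Gamma(\z)}H(z)\,d\nu(z)\right)\om(\z)\,dA(\z),\qquad H\ge0,
\]
which is immediate from the equivalence $z\in\Gamma(\z)\iff\z\in T(z)$ together with $\om(T(z))=\int_{T(z)}\om\,dA$. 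Applying it to $H=|f\overline g|$ and estimating the inner integral over $\Gamma(\z)$ by Hölder with exponents $q,q'$ gives $A_{q,\nu}(f)(\z)A_{q',\nu}(g)(\z)$, after which Hölder with exponents $p,p'$ in the outer integral yields $|\langle f,g\rangle_{T^2_2(\nu,\om)}|\le\|f\|_{T^p_q(\nu,\om)}\|g\|_{T^{p'}_{q'}(\nu,\om)}$. This proves the first inclusion and shows the pairing is well defined.

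When one of the exponents equals $1$ this double Hölder must be read carefully. If $q=1$ the inner step is merely the splitting of an integral by a supremum, $\int_{\Gamma(\z)}|fg|\,d\nu\le A_{\infty,\nu}(g)(\z)\,A_{1,\nu}(f)(\z)$, and the argument goes through verbatim with target space $T^{p'}_\infty(\nu,\om)$. If instead $p=1$ the outer integral lives in $L^1_\om$, and since $T^\infty_{q'}(\nu,\om)$ is defined through the Carleson functional $C_{q',\nu}$ rather than through $A_{q',\nu}$, the clean estimate must be replaced by a tent-space Carleson embedding inequality; this is the first point at which a stopping-time decomposition enters. The hypothesis $p+q\ne2$ (that is, $(p,q)\ne(1,1)$) guarantees that the remaining conjugate exponent is finite, so that exactly one of these two endpoint regimes, and not both simultaneously, can occur.

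For the reverse inclusion, let $L\in(T^p_q(\nu,\om))^\star$. I would first produce a representing function by a local argument. Fix a compact $K\subset\D$; for $f$ supported in $K$ one has the pointwise bound $A_{q,\nu}(f)(\z)\le\|f\|_{L^q(\nu|_K)}$ with $A_{q,\nu}(f)$ supported in the compact set $\widetilde{K}=\{\z:\Gamma(\z)\cap K\ne\emptyset\}$, whence $\|f\|_{T^p_q(\nu,\om)}\le\om(\widetilde{K})^{1/p}\|f\|_{L^q(\nu|_K)}$. Thus $L$ restricts to a bounded functional on $L^q(\nu|_K)$, and since $1\le q<\infty$ the Riesz representation theorem supplies $g_K\in L^{q'}(\nu|_K)$ representing it; exhausting $\D$ by compacts and checking consistency produces a single $g\in L^{q'}_{\mathrm{loc}}(\nu)$ with $L(f)=\int_\D f\overline g\,\om(T(z))\,d\nu(z)$ for all compactly supported $f$, the bounded factor $\om(T(z))$ being absorbed into the representation. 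By monotone convergence it then suffices to prove the a priori bound $\|g_R\|_{T^{p'}_{q'}(\nu,\om)}\lesssim\|L\|$ for the truncations $g_R=g\chi_{\{|z|<R\}}$, uniformly in $R$.

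The heart of the matter, and the step I expect to be the main obstacle, is this a priori estimate, equivalently the construction of a near-extremal test function $f$ with $\|f\|_{T^p_q(\nu,\om)}\lesssim1$ and $\Real\,\langle f,g_R\rangle\gtrsim\|g_R\|_{T^{p'}_{q'}(\nu,\om)}$. The naive choice that would make both pointwise Hölder steps sharp forces $f(z)$ to depend on the cone vertex $\z$ through $A_{q',\nu}(g_R)(\z)$, which is impossible because a single $z$ belongs to every cone $\Gamma(\z)$ with $\z\in T(z)$; this nonlocal coupling is precisely what obstructs a pointwise conjugate. I would resolve it, as announced in the introduction, by a stopping-time decomposition of $\D$ that compares the area functional $A_{q,\nu}$ with the Carleson functional $C_{q,\nu}$ of the relevant building blocks. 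On each stopping region the two functionals are comparable, so the conjugate profile can be frozen at a height governed by the local size of $A_{q',\nu}(g_R)$, and summing over the pairwise almost disjoint stopping regions reassembles the full $T^{p'}_{q'}$-norm while keeping $\|f\|_{T^p_q(\nu,\om)}$ controlled. The two endpoint regimes $p=1$ and $q=1$ are folded into the same scheme through the $C_{q,\nu}$- and $A_{\infty,\nu}$-formulations, respectively, while the excluded corner $p=q=1$ genuinely falls outside it: there the Fubini identity gives $T^1_1(\nu,\om)=L^1(\om(T(\cdot))\,d\nu)$ with dual $L^\infty(\nu)$, which is not a tent space of the form $T^{p'}_{q'}(\nu,\om)$ covered here. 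Combining the a priori bound with the first inclusion gives the two-sided norm equivalence and the surjectivity of $g\mapsto\langle\cdot,g\rangle$, completing the identification $(T^p_q(\nu,\om))^\star\simeq T^{p'}_{q'}(\nu,\om)$.
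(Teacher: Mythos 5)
Your forward inclusion is fine in the generic case, and you correctly identify that the endpoint $p=1$ requires more than the double H\"older argument, namely an inequality of the form $\langle |f|,|g|\rangle_{T^2_2(\nu,\om)}\lesssim\int_\D A_{q,\nu}(f)\,C_{q',\nu}(g)\,\om\,dA$; but you only announce that a stopping-time decomposition enters without constructing it. The paper does exactly this: it sets $h(\z)=\sup\{h:A_{q',\nu}(g|h)(\z)\le C_1C_{q',\nu}(g)(\z)\}$, where $A_{q',\nu}(g|h)$ is the area functional truncated to $\Gamma^h(\z)=\Gamma(\z)\setminus\overline{D(0,|\z|/(1+h))}$, and proves the balayage-type estimate $\int_\D k\,\om(T(z))\,d\nu(z)\le C_2\int_\D\bigl(\int_{\Gamma^{h(\z)}(\z)}k\,d\nu\bigr)\om\,dA$ by showing that the stopping region $T(z)\cap H(z)$ carries at least half of $\om(T(z))$. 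That is a genuinely nontrivial computation which you would need to supply.

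The serious gap is in the converse inclusion. Your plan --- local Riesz representation to produce $g$, followed by the a priori bound $\|g_R\|_{T^{p'}_{q'}(\nu,\om)}\lesssim\|L\|$ obtained from a near-extremal test function built by a stopping-time decomposition --- is never executed: the passage about freezing the conjugate profile at a height governed by the local size of $A_{q',\nu}(g_R)$ is a statement of intent rather than an argument, and you yourself flag it as the main obstacle. The paper avoids this construction entirely. It extends $L$ by Hahn--Banach to the Benedek--Panzone mixed-norm space $L^pL^q(\nu,\om)$, whose dual is $L^{p'}L^{q'}(\nu,\om)$, obtains a representing kernel $g(z,\z)$, and observes via Fubini that $L(f)=\langle f,P_0(g)\rangle_{T^2_2(\nu,\om)}$, where $P_0(g)(z)=\om(T(z))^{-1}\int_{T(z)}g(z,\z)\om(\z)\,dA(\z)$. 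The whole converse then reduces to the boundedness of $P_0:L^{p'}L^{q'}(\nu,\om)\to T^{p'}_{q'}(\nu,\om)$, which is established for $q'=\infty$, $1<p'<\infty$ by the weighted maximal-function theorem (Theorem~\ref{co:maxbou}), for $1<p'=q'<\infty$ by H\"older, for the remaining exponents by mixed-norm interpolation and self-adjointness of $\chi_{\Gamma(\z)}P_0$, and for $p'=\infty$ by a direct computation. If you wish to keep your route you must actually build the test function and control its $T^p_q$-norm; otherwise adopt the projection argument. Your closing observation that $T^1_1(\nu,\om)$ is an $L^1$-space whose dual is $L^\infty(\nu)$ correctly explains why $p+q=2$ is excluded.
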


\begin{proof}
Fubini's theorem and H\"older's inequality imply that each $g\in T^{p'}_{q'}(\nu,\om)$ induces a continuous linear functional on $T^{p}_{q}(\nu,\om)$ under the pairing \eqref{Eq:duality-pairing-tent-spaces} when $1\le q<\infty$ and $1<p<\infty$.

Let now $p=1$ and $1<q<\infty$. For $\z\in\D\setminus\{0\}$ and $0\le h\le\infty$, let
    \begin{equation*}
    \begin{split}
    \Gamma^h(\z)&=\Gamma(\z)\setminus \overline{D\left(0,\frac{|\z|}{1+h}\right)}\\
    &=\left\{z\in\D:|\arg z-\arg \z|<\frac12\left(1-\left|\frac{z}{\z}\right|\right)<\frac{h}{2(1+h)}\right\}
    \end{split}
    \end{equation*}
and
    $$
    A^{q'}_{q',\nu}(g|h)(\z)=\int_{\Gamma^h(\z)}|g(z)|^{q'}\,d\nu(z),\quad \z\in\D\setminus\{0\}.
    $$
Then $\Gamma^0(\z)=\emptyset$ and $\Gamma^\infty(\z)=\Gamma(\z)$ for all $\z\in\D\setminus\{0\}$, and $A_{q',\nu}(g|h)(\z)$ is a non-decreasing function of $h$. For every $g\in T^\infty_{q'}(\nu,\om)$ and $\z\in\D\setminus\{0\}$, define the stopping time by
    $$
    h(\z)=\sup\left\{h:A_{q',\nu}(g|h)(\z)\le C_1C_{q',\nu}(g)(\z)\right\},
    $$
where $C_1>0$ is a large constant to be determined later. Assume for a moment that there exists a constant $C_2>0$ such that
    \begin{equation}\label{Eq:stopping-time}
    \int_\D k(z)\om(T(z))\,d\nu(z)\le C_2\int_\D\left(\int_{\Gamma^{h(\z)}(\z)}k(z)\,d\nu(z)\right)\om(\z)\,dA(\z)
    \end{equation}
for all $\nu$-measurable non-negative functions $k$. Then, by choosing $k(z)=|f(z)||g(z)|$ and applying H\"older's inequality, we obtain
    \begin{equation}\label{Eq:CMS-estimate}
    \begin{split}
    |\langle f,g\rangle_{T^2_2(\nu,\om)}|
    \le C_1C_2\int_\D A_{q,\nu}(f)(\z) C_{q',\nu}(g)(\z)\,\om(\z)dA(\z)
    \lesssim\|f\|_{T^1_q(\nu,\om)}\|g\|_{T^\infty_{q'}(\nu,\om)}.
    \end{split}
    \end{equation}
It remains to prove \eqref{Eq:stopping-time}. Fubini's theorem yields
    $$
    \int_\D\left(\int_{\Gamma^{h(\z)}(\z)}k(z)\,d\nu(z)\right)\om(\z)\,dA(\z)
    =\int_\D\left(\int_{T(z)\cap H(z)}\om(\z)\,dA(\z)\right)k(z)\,d\nu(z),
    $$
where $H(z)=\{\z\in\D:|\z|\le(1+h(\z))|z|\}$, so it suffices to show that
    \begin{equation}\label{Eq:stopping-time-set-estimate}
    \frac1{\om(T(z))}\int_{T(z)\cap H(z)}\om(\z)\,dA(\z)\ge\frac{1}{C_2}
    \end{equation}
for all $z\in\D$. We will prove this only for $z$ close enough to the boundary $\T$, the proof for other values of $z$ follows from this reasoning with appropriate modifications. To see \eqref{Eq:stopping-time-set-estimate}, denote $T^h(z)=\{\z\in\D:z\in\Gamma^{h}(\z)\}=T(z)\cap D(0,(1+h)|z|)$, and for $|z|\ge\frac56$, set $z'=(1-5(1-|z|))z/|z|$ and $x=\frac1{|z|}-1$. Then, in order to have
    $
    T(z)\cap T^x(u)=T(z)\cap T(u)\cap D\left(0,|u|/|z|\right)\ne\emptyset,
    $
we must require $|u|\ge|z|^2$, that is, $1-|u|\le1-|z|^2\le2(1-|z|)$. This together with Fubini's theorem and Lemma~\ref{Lemma:replacement-Lemma1.1-2.} gives
    \begin{equation}\label{st1}
    \begin{split}
    &\frac1{\om(T(z))}\int_{T(z)}\left(\int_{\Gamma^x(\z)}|g(u)|^{q'}\,d\nu(u)\right)\om(\z)\,dA(\z)\\
    &=\frac1{\om(T(z))}\int_{\D}\left(\int_{T(z)\cap T^x(u)}\om(\z)\,dA(\z)\right)|g(u)|^{q'}\,d\nu(u)\\
    &\le\frac1{\om(T(z))}\int_{T(z')}\left(\int_{T(u)}\om(\z)\,dA(\z)\right)|g(u)|^{q'}\,d\nu(u)\\
    &\le \frac{C_3}{\om(T(z'))}\int_{T(z')}|g(u)|^{q'}\om(T(u))\,d\nu(u)
    \le C_3\inf_{v\in T(z)}C^{q'}_{q',\nu}(g)(v),
    \end{split}
    \end{equation}
where the  last inequality is valid because
    $$
    \frac1{\om(T(z'))}\int_{T(z')}|g(u)|^{q'}\om(T(u))\,d\nu(u)
    \le \sup_{a\in\Gamma(v)}\frac1{\om(T(a))}\int_{T(a)}|g(u)|^{q'}\om(T(u))\,d\nu(u)
    $$
for all $v\in T(z)$. Denote $E(z)=\D\setminus H(z)=\{\z\in\D:|\z|>(1+h(\z))|z|\}$. By the definition of $h(\z)$ and \eqref{st1}, and by choosing $C_1$ sufficiently large so that $C_1^{q'}>2C_3$, we deduce
    \begin{equation*}
    \begin{split}
    \int_{T(z)\cap E(z)}\om(\z)\,dA(\z)
    &\le\int_{T(z)\cap E(z)}\frac{A^{q'}_{q',\nu}\left(g\big|\left|\frac{\z}{z}\right|-1\right)(\z)}{C_1^{q'}C^{q'}_{q',\nu}(g)(\z)}\om(\z)\,dA(\z)\\
    &\le\int_{T(z)}\frac{A^{q'}_{q',\nu}(g|x)(\z)}{C_1^{q'}C^{q'}_{q',\nu}(g)(\z)}\om(\z)\,dA(\z)\\
    &\le\frac{1}{C_1^{q'}\inf_{v\in T(z)}C^{q'}_{q',\nu}(g)(v)}\int_{T(z)}A^{q'}_{q',\nu}(g|x)(\z)\om(\z)\,dA(\z)\\
    &\le\frac{C_3\om(T(z))}{C_1^{q'}}<\frac12\om(T(z)).
    \end{split}
    \end{equation*}
Therefore,
    \begin{equation*}
    \begin{split}
    &\frac1{\om(T(z))}\int_{T(z)\cap H(z)}\om(\z)\,dA(\z)
    =1-\frac1{\om(T(z))}\int_{T(z)\cap E(z)}\om(\z)\,dA(\z)\ge\frac12,\quad |z|\ge\frac56,
    \end{split}
    \end{equation*}
and the inequality \eqref{Eq:stopping-time-set-estimate} follows.


Conversely, let $L$ be a continuous linear functional on $T^p_q(\nu,\om)$. Let $L^pL^q(\nu,\om)$ be the space of those $h:\D^2\to\C$ such that
    \begin{equation*}
    \begin{split}
    \|h\|_{{L^{p}L^{q}}(\nu,\om)}^p&=\int_\D\left(\int_\D|h(z,\z)|^q\,d\nu(z)\right)^\frac{p}{q}\om(\z)\,dA(\z)<\infty,\quad 0<p,q<\infty,\\
    \|h\|_{{L^{p}L^{\infty}}(\nu,\om)}^p&=\int_\D\left(\nu\textrm{-ess}\sup_{z\in\D}|h(z,\z)|\right)^p\om(\z)\,dA(\z)<\infty,\quad 0<p<\infty,\\
    \|h\|_{{L^{\infty}L^{q}}(\nu,\om)}^q&=\sup_{\z\in\D}\int_\D|h(z,\z)|^q\,d\nu(z)<\infty\quad 0<q<\infty,
    \end{split}
    \end{equation*}
is well defined and finite. By the Hahn-Banach theorem each bounded linear functional $L:T^p_q(\nu,\om)\to \C $ can be extended to $L^pL^q(\nu,\om)$ with the same norm. We will denote this extension with the same symbol $L$. Further, \cite[Theorem~1.a) on p.~304]{BenedekPanzone} shows that $(L^pL^q(\nu,\om))^\star\simeq L^{p'}L^{q'}(\nu,\om)$ under the pairing
    $$
    \langle f,g\rangle=\int_\D\int_\D f(z,\z)\overline{g(z,\z)}\,d\nu(z)\om(\z)\,dA(\z),\quad 1\le p,q<\infty.
    $$
That is, there exists a function $g\in L^{p'}L^{q'}(\nu,\om)$ such that
    $$
    L(f)=\int_\D\int_\D f(z,\z)\overline{g(z,\z)}\,d\nu(z)\om(\z)\,dA(\z),\quad f\in L^{p}L^{q}(\nu,\om).
    $$
In particular,
    $$
    L(f)=\int_\D\left(\int_{\Gamma(\z)}\overline{g(z,\z)}f(z)\,d\nu(z)\right)\om(\z)\,dA(\z)
    $$
for all $f\in T^p_q(\nu,\om)$ with $\|L\|=\|g\|_{L^{p'}L^{q'}(\nu,\om)}$. Let
    $$
    P_0(g)(z)=\frac1{\om(T(z))}\int_{T(z)}g(z,\z)\om(\z)\,dA(\z),\quad z\in\D\setminus\{0\}.
    $$
Then Fubini's theorem shows that
    \begin{equation}
    \begin{split}\label{P0}
    L(f)&=\int_\D f(z)\left(\frac1{\om(T(z))}\int_{T(z)}g(z,\z)\om(\z)\,dA(\z)\right)\om(T(z))\,d\nu(z)\\
    &=\langle f,P_0(g) \rangle_{T^2_2(\nu,\om)}, \quad f\in T^p_q(\nu,\om).
    \end{split}
    \end{equation}
Therefore it suffices to show that $P_0:L^{p'}L^{q'}(\nu,\om)\to T^{p'}_{q'}(\nu,\om)$ is bounded when $1<p',q'\le\infty$ and $p'$ and $q'$ are not equal to infinity at the same time, that is, $p+q\ne2$.

First suppose that $q'=\infty$ and $1<p'<\infty$. Let $h(u)=\nu\textrm{-ess}\sup_{z\in\D}|g(z,u)|$ for $u\in\D$, and let $g\in L^{p'}L^{\infty}(\nu,\om)$. Then Theorem~\ref{co:maxbou} yields
    \begin{equation*}
    \begin{split}
    \|P_0(g)\|_{T^{p'}_\infty(\nu,\om)}^{p'}&
    \le\int_\D\left(\nu\textrm{-ess}\sup_{z\in\Gamma(\z)}\frac{1}{\om(T(z))}\int_{T(z)}h(u)\om(u)\,dA(u)\right)^{p'}\om(\z)\,dA(\z)\\
    &\lesssim\int_\D M^{p'}_\om(h)(\z)\om(\z)\,dA(\z)\lesssim\|h\|_{L^{p'}_\om}^{p'}=\|g\|_{L^{p'}L^{\infty}(\nu,\om)}^{p'},
    \end{split}
    \end{equation*}
and thus $P_0:L^{p'}L^{\infty}(\nu,\om)\to T^{p'}_{\infty}(\nu,\om)$ is bounded, or equivalently, $P_0\chi_{\Gamma(\z)}$ is bounded on $L^{p'}L^{\infty}(\nu,\om)$.

Now suppose that $1<p'=q'<\infty$. Fubini's theorem and H\"older's inequality give
    \begin{equation*}
    \begin{split}
    \|P_0(g)\|_{T^{p'}_{p'}(\nu,\om)}^{p'}
    &=\int_\D\left|\frac{1}{\om(T(z))}\int_{T(z)}g(z,\z)\om(\z)\,dA(\z)\right|^{p'}\om(T(z))\,d\nu(z)\\
    &\le\int_\D\int_{T(z)}|g(z,\z)|^{p'}\om(\z)\,dA(\z)\,d\nu(z)\\
    &=\|g\chi_{\Gamma(\z)}\|_{L^{p'}L^{p'}(\nu,\om)}^{p'}
    \le \|g\|_{L^{p'}L^{p'}(\nu,\om)}^{p'},\quad 1<p'<\infty.
    \end{split}
    \end{equation*}
By combining this with the previous case, we deduce that $P_0\chi_{\Gamma(\z)}$ defines a bounded operator from $L^{p'}L^{q'}(\nu,\om)$ into itself when either $q'=\infty$ and $1<p'<\infty$ or $1<p'=q'<\infty$. It then follows by \cite[Theorem~2 on p.~316]{BenedekPanzone} that $P_0\chi_{\Gamma(\z)}$ is bounded on all $L^{p'}L^{q'}(\nu,\om)$ when $1<p'\le q'\le\infty$ and $p+q\ne2$. Further, $\chi_{\Gamma(\z)}P_0$ is self-adjoint, and therefore $\chi_{\Gamma(\z)}P_0$ is bounded from $L^{p'}L^{q'}(\nu,\om)$ into itself also when $1< q'\le p'<\infty$.

It still remains to deal with the case $p=1$ and $1<q<\infty$. In this case we again have to show that $P_0:L^{\infty}L^{q'}(\nu,\om)\to T^{\infty}_{q'}(\nu,\om)$ is bounded. To see this, we use H\"older's inequality, Fubini's theorem and Lemma~\ref{Lemma:replacement-Lemma1.1-2.} to deduce
    \begin{equation}\label{1111}
    \begin{split}
    &\frac{1}{\om(T(a))}\int_{T(a)}|P_0(g)(z)|^{q'}\om(T(z))\,d\nu(z)\\
    &\le\frac{1}{\om(T(a))}\int_{T(a)}\int_{T(z)}|g(z,\z)|^{q'}\om(\z)\,dA(\z)\,d\nu(z)\\
    &=\frac{1}{\om(T(a))}\int_{\D}\int_{T(a)\cap\Gamma(\z)}|g(z,\z)|^{q'}\,d\nu(z)\om(\z)\,dA(\z)\\
    &\le\frac{1}{\om(T(a))}\int_{T(a')}\int_{\D}|g(z,\z)|^{q'}\,d\nu(z)\om(\z)\,dA(\z)\\
    &\lesssim\|g\|_{L^\infty L^{q'}(\nu,\om)}^{q'},\quad a'=(1-2(1-|a|))a/|a|,\quad |a|>\frac12,
    \end{split}
    \end{equation}
and it follows that $P_0:L^{\infty}L^{q'}(\nu,\om)\to T^{\infty}_{q'}(\nu,\om)$ is bounded.
\end{proof}

\subsection{Factorization}

In the proof of Theorem~\ref{Thm:tent-spaces-duality} we used a stopping time to obtain the crucial estimate \eqref{Eq:CMS-estimate} which yields
$\langle|f|,|g|\rangle_{T^2_2(\nu,\om)}\lesssim\|A_{q,\nu}(f)C_{q',\nu}(g)\|_{L^1_\om}$ for all $1<q\le\infty$.
If we now replace $f$ by $f^q$, and choose $g(z)=(\om(T(z)))^{-1}$, we deduce
    \begin{equation}
    \begin{split}\label{Eq:CMS-estimate-weak}
    \|f\|_{L^q(\nu)}^q\lesssim\|A^q_{\infty,\nu}(f) M_\om(\nu)\|_{L^1_\om}\le\|M_\om(\nu)\|_{L^\infty}\|f\|_{T^q_\infty(\nu,\om)}^q
    \end{split}
    \end{equation}
and similarly,
    \begin{equation}\label{Eq:CMS-estimate-weak-p>q}
    \begin{split}
    \|f\|_{L^q(\nu)}^q\lesssim\|M_\om(\nu)\|_{L_\om^\frac{p}{p-q}}^\frac{p-q}{p}\|f\|_{T^p_\infty(\nu,\om)}^q,\quad p>q.
    \end{split}
    \end{equation}
If $f$ is analytic in $\D$, then the estimates \eqref{Eq:CMS-estimate-weak} and \eqref{Eq:CMS-estimate-weak-p>q} together with \cite[Lemma~4.4]{PelRat} prove the implication (iii)$\Rightarrow$(i) in Theorem~\ref{Theorem:CarlesonMeasures}(a) and the sufficiency in Theorem~\ref{Theorem:CarlesonMeasures}(b). Carleson measures are discussed in Section~\ref{Section:Carleson}, where alternate proofs of these facts are presented.

The estimate \eqref{Eq:CMS-estimate-weak} will be used to factorize $T^p_q(\nu,\om)$-functions.
To prove the result, we will need the following lemma which is of the same spirit as the Amar-Bonami~\cite[Lemme~1]{AmBo}. Lemma~\ref{Lemma:AmarBonami} can be proved by arguments similar to those used in \eqref{1111}. The measure $(S\mu)_\psi\mu$ appearing in the statement is a $p$-Carleson measure for $A^p_\om$ by the observation above.

\begin{lemma}\label{Lemma:AmarBonami}
Let $\mu$ be a positive Borel measure on $\D$, $\psi:\D\to[0,\infty)$ a $\mu$-measurable function and $\om\in\DD$. Set $B_{\mu,\psi}(\z)=\int_{\Gamma(\z)}\psi(z)\,d\mu(z)$ and
    $$
    (S\mu)_\psi(z)=\psi(z)\int_{T(z)}\frac{\om(\z)}{B_{\mu,\psi}(\z)}dA(\z),\quad z\in\D\setminus\{0\}.
    $$
Then $M_\om((S\mu)_\psi\mu)\in L^\infty$.
\end{lemma}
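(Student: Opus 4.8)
The plan is to estimate $M_\om((S\mu)_\psi\mu)(z)$ straight from the definition of the weighted maximal function, i.e. to bound the averages
$\frac{1}{\om(S(a))}\int_{S(a)}(S\mu)_\psi\,d\mu$ uniformly over all Carleson squares $S(a)$ containing $z$. This follows the same scheme as the computation in \eqref{1111}: unfold the definition of $(S\mu)_\psi$, interchange the order of integration by Fubini's theorem, exploit a cancellation coming from the denominator $B_{\mu,\psi}$, and finally control the geometry of the resulting region by a comparable Carleson square together with the doubling of $\om$.

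First I would fix $a$ with $|a|>\frac12$; the remaining values of $z$ (those away from $\T$) are dealt with by the same argument with trivial modifications, since all quantities are comparable to constants there. Using the definition of $(S\mu)_\psi$ together with the equivalence $\z\in T(u)\Leftrightarrow u\in\Gamma(\z)$, Fubini's theorem gives
\[
\int_{S(a)}(S\mu)_\psi(u)\,d\mu(u)
=\int_{\D}\frac{\om(\z)}{B_{\mu,\psi}(\z)}\left(\int_{S(a)\cap\Gamma(\z)}\psi(u)\,d\mu(u)\right)dA(\z).
\]

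The crucial cancellation then comes from the trivial estimate $\int_{S(a)\cap\Gamma(\z)}\psi\,d\mu\le\int_{\Gamma(\z)}\psi\,d\mu=B_{\mu,\psi}(\z)$, which is precisely the reason $(S\mu)_\psi$ is built with $B_{\mu,\psi}$ in the denominator. On the set where $B_{\mu,\psi}(\z)=0$ the inner integral vanishes as well, so that part contributes nothing and no division-by-zero issue arises (one simply restricts to $\{B_{\mu,\psi}>0\}$). Hence the whole expression is bounded by $\int_{E}\om(\z)\,dA(\z)=\om(E)$, where $E=\{\z:S(a)\cap\Gamma(\z)\neq\emptyset\}=\bigcup_{u\in S(a)}T(u)$.

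It remains to carry out the geometric step, which I expect to be the main (though routine) obstacle, exactly as in \eqref{1111}. If $u\in S(a)$ and $\z\in T(u)$, then $|\z|>|u|\ge|a|$ and $|\arg\z-\arg u|<\frac12(1-|u|)\le\frac12(1-|a|)$, so $|\arg\z-\arg a|\le 1-|a|$ and $|\z|\ge 2|a|-1$. Consequently $E\subset S(a')$ with $a'=(1-2(1-|a|))a/|a|$. Since $1-|a'|=2(1-|a|)$, the doubling property of $\om$ furnished by Lemma~\ref{Lemma:replacement-Lemma1.1-2.} yields $\om(S(a'))\lesssim\om(S(a))$, whence
\[
\frac{1}{\om(S(a))}\int_{S(a)}(S\mu)_\psi\,d\mu\le\frac{\om(S(a'))}{\om(S(a))}\lesssim1
\]
uniformly in $a$. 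Taking the supremum over all $S(a)\ni z$ gives $M_\om((S\mu)_\psi\mu)\in L^\infty$, as claimed.
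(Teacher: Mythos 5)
Your proof is correct and follows essentially the route the paper intends: the paper only sketches this lemma by referring to the computation in \eqref{1111}, and your argument is exactly that scheme --- Fubini via $\z\in T(u)\Leftrightarrow u\in\Gamma(\z)$, the cancellation $\int_{S(a)\cap\Gamma(\z)}\psi\,d\mu\le B_{\mu,\psi}(\z)$, and the containment of $\bigcup_{u\in S(a)}T(u)$ in $S(a')$ combined with the doubling of $\om$ from Lemma~\ref{Lemma:replacement-Lemma1.1-2.}. Nothing is missing.
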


If $\psi(z)=(\om(T(z)))^{-1}$ we will simply write $S\mu$ and $B_\mu$ instead of $(S\mu)_\psi$ and~$B_{\mu,\psi}$.

\begin{theorem}\label{Thm:Factorization-tent-spaces}
Let $0<p,q<\infty$, $\om\in\DD$ and let $\nu$ be a positive Borel measure on $\D$, finite on compact sets. If $g\in T^p_{\infty}(\nu,\om)$ and $h\in T^{\infty}_{q}(\nu,\om)$, then $f=gh\in T^p_q(\nu,\om)$ with $\|f\|_{T^p_q(\nu,\om)}\lesssim\|g\|_{T^p_{\infty}(\nu,\om)}\|h\|_{T^\infty_q(\nu,\om)}$.

Conversely, each $f\in T^p_q(\nu,\om)$ can be represented in the form $f=gh$, where $g\in T^p_{\infty}(\nu,\om)$ and $h\in T^{\infty}_{q}(\nu,\om)$ such that $\|g\|_{T^p_{\infty}(\nu,\om)}\asymp\|f\|_{T^p_q(\nu,\om)}$ and $\|h\|_{T^\infty_q(\nu,\om)}\lesssim1$.
\end{theorem}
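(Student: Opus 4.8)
The plan is to prove the two inclusions separately, using the pointwise product structure for $T^p_\infty\cdot T^\infty_q\subseteq T^p_q$ and the Amar--Bonami construction of Lemma~\ref{Lemma:AmarBonami} for the factorization.

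For the inclusion I would start from the pointwise bound
\[
A_{q,\nu}(gh)(\z)^q=\int_{\Gamma(\z)}|g|^q|h|^q\,d\nu\le A_{\infty,\nu}(g)(\z)^q\,A_{q,\nu}(h)(\z)^q ,
\]
so that the task reduces to absorbing the factor $A_{q,\nu}(h)(\z)^q=\int_{\Gamma(\z)}|h|^q\,d\nu$. This is \emph{not} pointwise dominated by $\|h\|_{T^\infty_q}$, because the cone integral lacks the normalising weight $\om(T(\cdot))$ built into $C_{q,\nu}$. The observation to exploit is that $\|h\|_{T^\infty_q}\lesssim1$ says exactly that $d\lambda=|h|^q\om(T(\cdot))\,d\nu$ is a Carleson measure for $\om$, that is $\lambda(T(a))\lesssim\om(T(a))$. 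Rewriting $\int_{\Gamma(\z)}|h|^q\,d\nu=\int_{\Gamma(\z)}\om(T(z))^{-1}\,d\lambda(z)$ and running a stopping-time/good-$\lambda$ argument in the spirit of \eqref{Eq:CMS-estimate} (and its consequence \eqref{Eq:CMS-estimate-weak}) against this Carleson measure should give $\|gh\|_{T^p_q}\lesssim\|g\|_{T^p_\infty}\|h\|_{T^\infty_q}$ for all $0<p,q<\infty$.

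For the factorization I would set $d\mu=|f|^q\,d\nu$ and take $\psi\equiv1$ in Lemma~\ref{Lemma:AmarBonami}, so that $B_\mu(\z)=\mu(\Gamma(\z))=A_{q,\nu}(f)(\z)^q$ and $(S\mu)(z)=\int_{T(z)}\om(\z)\,A_{q,\nu}(f)(\z)^{-q}\,dA(\z)$. Define $g\ge0$ by $g(z)^q=\om(T(z))/(S\mu)(z)$ and put $h=f/g$. Then $\om(T(z))/g(z)^q=(S\mu)(z)$, whence
\[
\int_{T(a)}|h(z)|^q\om(T(z))\,d\nu(z)=\int_{T(a)}(S\mu)(z)\,d\mu(z)=\big((S\mu)\mu\big)(T(a))\le\big((S\mu)\mu\big)(S(a)).
\]
Lemma~\ref{Lemma:AmarBonami} gives $M_\om((S\mu)\mu)\in L^\infty$, and with $T(a)\subseteq S(a)$ and $\om(S(a))\asymp\om(T(a))$ (Lemma~\ref{Lemma:replacement-Lemma1.1-2.}) this yields $\big((S\mu)\mu\big)(T(a))\lesssim\om(T(a))$. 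Taking the supremum over $a\in\Gamma(\z)$ shows $C_{q,\nu}(h)\lesssim1$, i.e. $\|h\|_{T^\infty_q}\lesssim1$, as required.

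It then remains to prove $\|g\|_{T^p_\infty}\lesssim\|f\|_{T^p_q}$; the matching lower bound $\|f\|_{T^p_q}\lesssim\|g\|_{T^p_\infty}\|h\|_{T^\infty_q}\lesssim\|g\|_{T^p_\infty}$ is just the first part applied to $f=gh$, giving $\|g\|_{T^p_\infty}\asymp\|f\|_{T^p_q}$. Since $g(z)^q$ is the $\om$-harmonic mean of $A_{q,\nu}(f)^q$ over $T(z)$, the harmonic--arithmetic mean inequality yields $g(z)^q\le\om(T(z))^{-1}\int_{T(z)}A_{q,\nu}(f)(\z)^q\om(\z)\,dA(\z)$, and because $\z_0\in T(z)\subseteq S(z)$ whenever $z\in\Gamma(\z_0)$, the right-hand side is at most $M_\om\!\big(A_{q,\nu}(f)^q\big)(\z_0)$. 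Thus $A_{\infty,\nu}(g)\lesssim[M_\om((\cdot)^q)]^{1/q}(A_{q,\nu}(f))$, and Theorem~\ref{co:maxbou} with $\alpha=1/q$ and $\mu=\om\,dA$ (so that $M_{\om,1}(\om\,dA)\equiv1\in L^\infty$) bounds this in $L^p_\om$ by $\|A_{q,\nu}(f)\|_{L^p_\om}=\|f\|_{T^p_q}$ --- but only under the hypothesis $p\alpha>1$, i.e. $p>q$. The hard part is the range $p\le q$: there the arithmetic-mean majorant is too crude, since for a weight with most of its mass near $\T$ the average of $A_{q,\nu}(f)^q$ over $T(z)$ can dwarf $A_{q,\nu}(f)(z)^q$, and one must genuinely use that $g$ is a \emph{harmonic} mean. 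For this I would perform a Whitney/stopping-time decomposition adapted to the level sets of $A_{q,\nu}(f)$, on each piece of which $A_{q,\nu}(f)$ and hence $g$ is essentially constant, and then sum the local estimates using the (separately established) independence of the $T^p_q$-quasinorm on the aperture of the lenses. This last decomposition, valid for every $0<p<\infty$, is where the main work of the theorem lies.
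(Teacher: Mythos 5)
Your overall architecture (product direction via the Carleson embedding \eqref{Eq:CMS-estimate-weak}, factorization via the Amar--Bonami balayage of Lemma~\ref{Lemma:AmarBonami}) matches the paper's, and your construction of $h$ is actually slightly cleaner: defining $g(z)^q=\om(T(z))/(S\mu)(z)$ makes $|g|^{-q}=(S\mu)$ an identity, so the bound $C_{q,\nu}(h)\lesssim1$ follows at once from Lemma~\ref{Lemma:AmarBonami}, whereas the paper has to prove the inequality \eqref{Eq:Balayage} separately. But there is a genuine gap exactly where you flag it: the estimate $\|g\|_{T^p_{\infty}(\nu,\om)}\lesssim\|f\|_{T^p_q(\nu,\om)}$ for $p\le q$. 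The Whitney/stopping-time decomposition you propose for that range is not carried out, and it is not needed. The missing idea is a free exponent: instead of majorizing the harmonic mean of $A^q_{q,\nu}(f)$ over $T(z)$ by the \emph{arithmetic} mean (which forces $\a=1/q$ and hence $p>q$ in Theorem~\ref{co:maxbou}), majorize it by the $s$-power mean with any fixed $0<s<p$. The power mean inequality gives
\begin{equation*}
g(z)^{s}=\left(\frac{1}{\om(T(z))}\int_{T(z)}\frac{\om(\z)\,dA(\z)}{A^q_{q,\nu}(f)(\z)}\right)^{-\frac{s}{q}}\le\frac{1}{\om(T(z))}\int_{T(z)}A^s_{q,\nu}(f)(\z)\om(\z)\,dA(\z)\le M_\om\bigl(A^s_{q,\nu}(f)\bigr)(\z_0)
\end{equation*}
for $z\in\Gamma(\z_0)$, and now Theorem~\ref{co:maxbou} applies with $\a=1/s$ and exponent $p/s>1$ for \emph{every} $0<p<\infty$, yielding $\|A_{\infty,\nu}(g)\|_{L^p_\om}\lesssim\|A^s_{q,\nu}(f)\|_{L^{p/s}_\om}^{1/s}=\|f\|_{T^p_q(\nu,\om)}$. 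This is precisely the role of the parameter $s<p$ in the paper, which defines $g^s$ directly as the tent average of $M_\om(A^s_{q,\nu}(f))$ and then applies Theorem~\ref{co:maxbou} twice; the lower bound $|g|^{-q}\lesssim S\mu$ for that larger $g$ is the "tricky H\"older" \eqref{Eq:tricky-Holder}, which is again just the power mean inequality.

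The forward direction of your proposal is also only a sketch, and your opening pointwise bound $A_{q,\nu}(gh)^q\le A_{\infty,\nu}(g)^qA_{q,\nu}(h)^q$ pulls all of $|g|$ out of the cone integral, after which the uncontrolled factor $A_{q,\nu}(h)$ cannot be absorbed. The working manipulation keeps part of $|g|$ inside: for $p<q$ extract only $A_{\infty,\nu}(g)^{q-p}$, apply H\"older with exponents $q/(q-p)$ and $q/p$, and then apply \eqref{Eq:CMS-estimate-weak} to $\int_\D|g|^p\,d\la$ with $d\la=|h|^q\om(T(\cdot))\,d\nu$ (whose Carleson constant is $\|h\|_{T^\infty_q(\nu,\om)}^q$); for $p>q$ one dualizes against $L^{\frac{p}{p-q}}_\om$ and uses Theorem~\ref{co:maxbou}. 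No new stopping time is required beyond the one already encoded in \eqref{Eq:CMS-estimate-weak}.
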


\begin{proof}
Let first $f=gh$, where $g\in T^p_{\infty}(\nu,\om)$ and $h\in T^{\infty}_q(\nu,\om)$, and write $d\mu(z)=|h(z)|^q\om(T(z))\,d\nu(z)$ for short. If $0<p<q<\infty$, then H\"older's inequality, Fubini's theorem and \eqref{Eq:CMS-estimate-weak} yield
    \begin{equation*}
    \begin{split}
    \|f\|_{T^p_q(\nu,\om)}^p
    &\le\int_\D A_{\infty,\nu}(g)^\frac{(q-p)p}{q}(\z)\left(\int_{\Gamma(\z)}|g(z)|^p|h(z)|^q\,d\nu(z)\right)^\frac{p}{q}\om(\z)\,dA(\z)\\
    &\le\|g\|_{T^p_{\infty}(\nu,\om)}^{\frac{q-p}{q}p}\left(\int_\D|g(z)|^p|h(z)|^q\om(T(z))\,d\nu(z)\right)^\frac{p}{q}\\
    &\lesssim\|g\|_{T^p_{\infty}(\nu,\om)}^{\frac{q-p}{q}p}\|g\|_{T^p_{\infty}(\nu,\om)}^{\frac{p^2}{q}}\|h\|_{T^\infty_q(\nu,\om)}^p
    =\|g\|_{T^p_{\infty}(\nu,\om)}^{p}\|h\|_{T^\infty_q(\nu,\om)}^p<\infty,
    \end{split}
    \end{equation*}
and so $f=gh\in T^p_q(\nu,\om)$ with $\|f\|_{T^p_q(\nu,\om)}\lesssim\|g\|_{T^p_{\infty}(\nu,\om)}\|h\|_{T^\infty_q(\nu,\om)}$. The reasoning in the case $q=p$ is similar but easier; just use Fubini's theorem and \eqref{Eq:CMS-estimate-weak}.

If $0<q<p<\infty$, then $(L^\frac{p}{q}_\om)^\star\simeq L^\frac{p}{p-q}_\om$. Let $\varphi\in L^\frac{p}{p-q}_\om$. Then Fubini's theorem, H\"older's inequality, \eqref{Eq:CMS-estimate-weak} and Theorem~\ref{co:maxbou} yield
    \begin{equation*}
    \begin{split}
    \left|\left\langle\int_{\Gamma(\cdot)}|g(z)|^q|h(z)|^q\,d\nu(z),\varphi\right\rangle_{L^2_\om}\right|
    &\lesssim\int_\D|g(z)|^qM_\om(|\varphi|)(z)\,d\mu(z)\\
    &\le\|g\|_{L^p(\mu)}^q\|M_\om(|\vp|)\|_{L^{\frac{p}{p-q}}(\mu)}\\
    &\lesssim\|g\|_{T^p_{\infty}(\nu,\om)}^{q}\|h\|_{T^\infty_q(\nu,\om)}^q\|\varphi\|_{L^\frac{p}{p-q}_\om},
    \end{split}
    \end{equation*}
and it follows that $f=gh\in T^p_q(\nu,\om)$ with $\|f\|_{T^p_q(\nu,\om)}\lesssim\|g\|_{T^p_{\infty}(\nu,\om)}\|h\|_{T^\infty_q(\nu,\om)}$.

To see the converse, let $f\in T^p_q(\nu,\om)$, and write $f=gh$, where
    $$
    g^s(z)=\frac1{\om(T(z))}\int_{T(z)}M_\om\left(A^s_{q,\nu}(f)\right)(\z)\om(\z)\,dA(\z),\quad 0<s<p.
    $$
Then two applications of Theorem~\ref{co:maxbou} yield
    \begin{equation*}
    \begin{split}
    \|g\|_{T^p_\infty(\nu,\om)}^p
    \lesssim\|M_\om(M_\om(A^s_{q,\nu}(f)))\|_{L^{\frac{p}{s}}_\om}^\frac{p}{s}
    \lesssim\|A^s_{q,\nu}(f)\|_{L^{\frac{p}{s}}_\om}^\frac{p}{s}
    =\|f\|_{T^p_q(\nu,\om)}^p.
    \end{split}
    \end{equation*}
Thus $g\in T^p_\infty(\nu,\om)$ with $\|g\|_{T^p_{\infty}(\nu,\om)}\lesssim\|f\|_{T^p_q(\nu,\om)}$. To complete the proof, we need to show that $h=f/g\in T^\infty_q(\nu,\om)$ with a norm bound independent of $f$ and $g$, because once this is proved, the first part of the theorem ensures $\|f\|_{T^p_q(\nu,\om)}\lesssim\|g\|_{T^p_{\infty}(\nu,\om)}$. Write $d\mu(z)=|f(z)|^q\om(T(z))\,d\nu(z)$, and assume for a moment that
    \begin{equation}\label{Eq:Balayage}
    \begin{split}
    |g(z)|^{-q}\lesssim(S\mu)(z)=\frac{1}{\om(T(z))}\int_{T(z)}\frac{\om(\z)}{B_\mu(\z)}\,dA(\z),\quad z\in\D\setminus\{0\}.
    \end{split}
    \end{equation}
This together with Lemma~\ref{Lemma:AmarBonami} yields
    \begin{equation*}
    \begin{split}
    \int_{T(b)}|h(z)|^q\om(T(z))\,d\nu(z)
    &=\int_{T(b)}|g(z)|^{-q}\,d\mu(z)\\
    &\lesssim\int_{T(b)}(S\mu)(z)\,d\mu(z)\lesssim\om(T(b)),\quad b\in\D\setminus\{0\},
    \end{split}
    \end{equation*}
which implies the desired conclusion.

It remains to prove \eqref{Eq:Balayage}, which is equivalent to
    \begin{equation}\label{Eq:tricky-Holder}
    \left(\int_\D\frac{d\eta(\z)}{A^q_{q,\nu}(f)(\z)}\right)^{-\frac1q}\lesssim\left(\int_\D M_\om(A^s_{q,\nu}(f))(\z)\,d\eta(\z)\right)^\frac1s,
    \end{equation}
where
    $$
    d\eta(\z)=\frac{\chi_{T(z)}(\z)\om(\z)\,dA(\z)}{\om(T(z))}
    $$
is a probability measure. The estimate \eqref{Eq:tricky-Holder} without the maximal function $M_\om$ follows by H\"older's inequality, and since a direct calculation gives
    $$
    \int_\D A^s_{q,\nu}(f)(\z)\,d\eta(\z)\lesssim\int_\D M_\om(A^s_{q,\nu}(f))(\z)\,d\eta(\z),
    $$
we obtain \eqref{Eq:Balayage}.
\end{proof}

\subsection{Atomic decomposition}

If $I\subset\T$ is an arc such that $|I|<1$, then there exists $a_I\in\D\setminus\{0\}$ such that $S(a_I)=S(I)$. The tent $T(I)$ associated with such $I$ is $T(a_I)$. If $|I|\ge1$, we set $T(I)=\cup_{J\subset I,\,|J|<1}T(a_J)\cup\{0\}$.

For $0<p<\infty$, a function $a$ is a $T^p_\infty(\nu,\om)$-atom, if it is supported in some tent $T=T(I)$ and $\nu\textrm{-ess}\sup_{u\in T}|a(u)|\le\om(T)^{-\frac1p}$. It is clear that $\|a\|_{T^p_\infty(\nu,\om)}\lesssim1$.

\begin{theorem}\label{Proposition:atoms}
Let $0<p<\infty$, $\om\in\DD$, and let $\nu$ be a positive Borel measure on~$\D$, finite on compact sets. Then each $f\in T^p_\infty(\nu,\om)$ can be represented in the form $f=\sum_j\lambda_ja_j$, where $a_j$ are $T^p_\infty(\nu,\om)$-atoms such that ${\rm supp}(a_j)\cap{\rm supp}(a_k)=\emptyset$ for $j\ne k$, and $\lambda_j\in\C$ such that $\sum_{j}|\lambda_j|^p\asymp\|f\|^p_{T^p_\infty(\nu,\om)}$.
\end{theorem}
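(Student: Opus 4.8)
The plan is to adapt the Calder\'on--Zygmund (stopping-time) decomposition of the level sets of a nontangential maximal function, in the spirit of Coifman, Meyer and Stein, to the weighted situation at hand. Write $M=A_{\infty,\nu}(f)$, so that $\|f\|_{T^p_\infty(\nu,\om)}^p=\|M\|_{L^p_\om}^p$, and for each integer $k$ set
$$
U_k=\{\z\in\D:M(\z)>2^k\}.
$$
The maximal function $M$ is lower semicontinuous (the lenses $\Gamma(\z)$ grow appropriately as $\z$ moves), so each $U_k$ is open; the $U_k$ decrease in $k$, and Chebyshev's inequality together with $M\in L^p_\om$ gives $\om(U_k)\le 2^{-kp}\|f\|_{T^p_\infty(\nu,\om)}^p<\infty$. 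To each $U_k$ I associate its tent region $\widehat{U_k}=\{z\in\D:T(z)\subset U_k\}$, which again decreases in $k$.

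The first real step is a pointwise bound tying $|f|$ to the level $2^k$ off $\widehat{U_k}$. Since $M(\z)=\nu\text{-}\mathrm{ess}\sup_{z\in\Gamma(\z)}|f(z)|$, a Fubini argument applied to the set $\{(z,\z):\z\in T(z),\ |f(z)|>M(\z)\}$, whose $\z$-slices are $\nu$-null by the very definition of the essential supremum, shows that for $\nu$-a.e.\ $z$ one has $|f(z)|\le M(\z)$ for every $\z\in T(z)$. Hence $\{z:|f(z)|>2^k\}\subset\widehat{U_k}$ up to a $\nu$-null set; in particular $|f|\le 2^{k+1}$ holds $\nu$-a.e.\ on $\D\setminus\widehat{U_{k+1}}$, while $f$ vanishes $\nu$-a.e.\ off $\bigcup_k\widehat{U_k}$ and $\bigcap_k\widehat{U_k}$ is $\nu$-null because $M<\infty$ $\om$-a.e. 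This produces the $\nu$-a.e.\ layer decomposition $f=\sum_k f\chi_{\widehat{U_k}\setminus\widehat{U_{k+1}}}$.

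Next I would take a Whitney decomposition of each open set $U_k$ into a family of Carleson boxes $\{S(I_i^k)\}_i$ of bounded overlap whose union is $U_k$ and whose tents cover the corresponding tent region, $\widehat{U_k}\subset\bigcup_i T(I_i^k)$. Because $\om\in\DD$, Lemma~\ref{Lemma:replacement-Lemma1.1-2.} gives $\om(T(I_i^k))\asymp\om(S(I_i^k))$, whence $\sum_i\om(T(I_i^k))\asymp\om(U_k)$ with constants independent of the scale of the boxes. Disjointifying the tents into sets $W_i^k\subset T(I_i^k)$ that partition $\widehat{U_k}$, I set
$$
\lambda_i^k=2^{k+1}\om(T(I_i^k))^{1/p},\qquad a_i^k=\frac{1}{\lambda_i^k}f\chi_{(\widehat{U_k}\setminus\widehat{U_{k+1}})\cap W_i^k}.
$$
The pointwise bound gives $\nu\text{-}\mathrm{ess}\sup|a_i^k|\le 2^{k+1}/\lambda_i^k=\om(T(I_i^k))^{-1/p}$, so each $a_i^k$ is a $T^p_\infty(\nu,\om)$-atom supported in $T(I_i^k)$ with pairwise disjoint supports, and relabelling $(k,i)\mapsto j$ gives $f=\sum_j\lambda_j a_j$ $\nu$-a.e. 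The coefficients obey
$$
\sum_j|\lambda_j|^p=\sum_k 2^{(k+1)p}\sum_i\om(T(I_i^k))\asymp\sum_k 2^{kp}\om(U_k)\asymp\|M\|_{L^p_\om}^p=\|f\|_{T^p_\infty(\nu,\om)}^p,
$$
where $\sum_k 2^{kp}\om(U_k)\asymp\|M\|_{L^p_\om}^p$ is the standard dyadic layer-cake estimate. Conversely, the disjointness of the supports forces $A_{\infty,\nu}(f)(\z)^p\le\sum_j|\lambda_j|^pA_{\infty,\nu}(a_j)(\z)^p$ (only one term is nonzero at each point), so integrating against $\om$ and using $\|a_j\|_{T^p_\infty(\nu,\om)}\lesssim1$ yields $\|f\|_{T^p_\infty(\nu,\om)}^p\lesssim\sum_j|\lambda_j|^p$.

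I expect the main obstacle to lie at the interface of the measure-theoretic and geometric parts. On the measure-theoretic side, the ess-sup Fubini lemma has to be made fully rigorous so that ``$|f|\le 2^{k+1}$ off $\widehat{U_{k+1}}$'' holds genuinely $\nu$-a.e. On the geometric side, one must verify that the tent region $\widehat{U_k}$ is indeed covered by the tents over the Whitney boxes---this typically requires a bounded dilation of the arcs $I_i^k$---and then manufacture the genuinely disjoint sets $W_i^k$ from the boundedly overlapping tents by a measurable selection. In both places it is the doubling hypothesis $\om\in\DD$, through the equivalences $\om(S(z))\asymp\om(T(z))\asymp\om^\star(z)$ of Lemma~\ref{Lemma:replacement-Lemma1.1-2.}, that keeps every comparison constant independent of the scale of the boxes.
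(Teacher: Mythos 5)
Your argument is essentially the paper's: both perform a Coifman--Meyer--Stein type stopping-time decomposition on the dyadic level sets of $A_{\infty,\nu}(f)$, cover them via a Whitney decomposition with dilated arcs, disjointify the resulting tents, normalize $f$ restricted to each piece into an atom, and obtain the two-sided $\ell^p$ estimate from the layer-cake formula $\sum_k 2^{kp}\om(U_k)\asymp\|A_{\infty,\nu}(f)\|_{L^p_\om}^p$. The only cosmetic differences are that the paper takes the Whitney covering of the radial projection $\widehat{O}_k(f)\subset\T$ rather than of $U_k\subset\D$ itself (and treats separately the degenerate case where that projection is all of $\T$), and the delicate points you flag---the $\nu$-a.e.\ pointwise bound $|f|\le 2^{k+1}$ off the next level set and the estimate $\sum_i\om(T(I^k_i))\lesssim\om(U_k)$---are exactly the steps the paper also leaves at the same level of informality.
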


\begin{proof}
For $k\in\mathbb{Z}$ and $f\in T^p_\infty(\nu,\om)$, let $O_k(f)=\{z\in\D:A^p_{\infty,\nu}(f)(z)>2^k\}\subset\D$, and consider the open sets $\widehat{O}_k(f)=\{z/|z|:z\in O_k(f)\}$. Then $O_{k+1}(f)\subset O_k(f)$ for all $k$, and $\cup_k O_k(f)$ contains the support of $f$. If $\widehat{O}_k(f)\ne\T$, then let $\widehat{O}_k(f)=\cup_j I^k_j$ be the Whitney covering of $\widehat{O}_k(f)$. If $\widehat{O}_k(f)=\T$, choose a point $\xi\in\T$ such that $\textrm{dist}(\xi,\D\setminus O_k(f))=\textrm{dist}(\T,\D\setminus O_k(f))$, and consider the unit circle as the set $\{e^{it}:\arg\xi\le t<\arg\xi+2\pi\}$. Divide it into two half circles and divide these in turn into two arcs of equal size. Now divide those two that terminate in $\xi$ again into two arcs of equal size, and continue in this fashion until the length of the two arcs of equal size terminating in $\xi$ is less than $\textrm{dist}(\xi,\D\setminus O_k(f))$. Then the length of the shortest arcs in the obtained covering of $\T$ is at least $\textrm{dist}(\xi,\D\setminus O_k(f))/2$. In this case we also denote the covering by $\widehat{O}_k(f)=\cup_j I^k_j$. Now, for each $k$ and $j$, let $J^k_j\subset\T$ be the arc, concentric with $I^k_j$, such that $|J^k_j|=c|I^k_j|$. Let $U(I^k_j)$ be the triangular set in $\D$ bounded by $I^k_j$ and the line segments joining the end points of $I^k_j$ to the origin. Choose now $c\ge2$ sufficiently large such that $O_k(f)\setminus O_{k+1}(f)=\cup_j\Delta^k_j$, where $\Delta^k_j=T(J^k_j)\cap U(I^k_j)\cap(O_k(f)\setminus O_{k+1}(f))$ are disjoint for a fixed $k$. Now, set $a^k_j=f\chi_{\Delta^k_j}2^{-\frac{k+1}{p}}\om(T(J^k_j))^{-\frac1p}$ and $\lambda^k_j=2^{\frac{k+1}{p}}(\om(T(J^k_j)))^\frac1p$. Then $f=\sum_kf\chi_{O_k(f)\setminus O_{k+1}(f)}=\sum_k\sum_jf\chi_{\Delta^k_j}=\sum_k\sum_j\lambda^k_ja^k_j$. This gives us the desired atomic decomposition if we can show that $a^k_j$ are $T^p_\infty(\nu,\om)$-atoms and $\{\lambda^k_j\}$ satisfies the $\ell^p$-norm estimate. It is clear, by the definition of $a^k_j$, that its support is contained in $T(J^k_j)$ and
    $$
    \nu\textrm{-ess}\sup_{z\in T(J^k_j)}|a^k_j(z)|\le\nu\textrm{-ess}\sup_{z\in T(J^k_j)}\left(\frac{|f(z)|\chi_{O_k(f)\setminus O_{k+1}(f)}(z)}{2^{\frac{k+1}{p}}(\om(T(J^k_j)))^\frac1p}\right)
    \le\frac{1}{(\om(T(J^k_j)))^\frac1p}
    $$
by the definition of the sets $O_k(f)$. Hence $a^k_j$ is a $T^p_\infty(\nu,\om)$-atom. Moreover,
    \begin{equation*}
    \begin{split}
    \sum_k\sum_j|\lambda^k_j|^p&=\sum_k2^{k+1}\sum_j\om(T(J^k_j))\asymp\sum_k2^{k+1}\sum_j\om(T(I^k_j))\\
    &\lesssim\sum_k2^{k+1}\om(O_k(f))\asymp\|f\|^p_{T^p_\infty(\nu,\om)},
    \end{split}
    \end{equation*}
and the desired upper bound for the $\ell^p$-norm of $\{\lambda^k_j\}$ follows. The reasoning above readily gives also the same lower bound.
\end{proof}

A function $a$ is a $T^p_q(\nu,\om)$-atom, if it is supported in some tent $T=T(I)$ and
    $$
    \int_T|a(z)|^q\om(T(z))\,d\nu(z)\le\om(T)^{\frac{p-q}{p}},\quad 0<p\le q<\infty.
    $$
By H\"older's inequality and Fubini's theorem, $\|a\|_{T^p_q(\nu,\om)}\lesssim1$.

\begin{theorem}\label{Theorem:atoms-T^1_r}
Let $0<p\le q<\infty$, $\om\in\DD$ and let $\nu$ be a positive Borel measure on~$\D$, finite on compact sets. Then each $f\in T^p_q(\nu,\om)$ can be represented in the form $f=\sum_j\lambda_ja_j$, where $a_j$ are $T^p_q(\nu,\om)$-atoms such that ${\rm supp}(a_j)\cap{\rm supp}(a_k)=\emptyset$ for $j\ne k$, and $\lambda_j\in\C$ such that $\sum_{j}|\lambda_j|^p\asymp\|f\|^p_{T^p_q}(\nu,\om)$.
\end{theorem}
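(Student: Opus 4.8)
The plan is to deduce the decomposition from the two results already established: the factorization $T^p_q(\nu,\om)=T^p_\infty(\nu,\om)\cdot T^\infty_q(\nu,\om)$ of Theorem~\ref{Thm:Factorization-tent-spaces} and the atomic decomposition of $T^p_\infty(\nu,\om)$ furnished by Theorem~\ref{Proposition:atoms}. The guiding idea is that a $T^p_q$-atom should be a $T^p_\infty$-atom multiplied by a fixed $T^\infty_q$-factor: the $T^\infty_q$-control then converts the pointwise size bound of a $T^p_\infty$-atom into the integral size bound demanded of a $T^p_q$-atom.

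First I would apply Theorem~\ref{Thm:Factorization-tent-spaces} to write $f=gh$ with $g\in T^p_\infty(\nu,\om)$, $h\in T^\infty_q(\nu,\om)$, $\|g\|_{T^p_\infty(\nu,\om)}\asymp\|f\|_{T^p_q(\nu,\om)}$ and $\|h\|_{T^\infty_q(\nu,\om)}\lesssim1$. Next I would decompose $g=\sum_j\lambda_jb_j$ via Theorem~\ref{Proposition:atoms}, where the $b_j$ are $T^p_\infty(\nu,\om)$-atoms with pairwise disjoint supports and $\sum_j|\lambda_j|^p\asymp\|g\|^p_{T^p_\infty(\nu,\om)}$. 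Setting $a_j=b_jh$, the disjointness of the supports of the $b_j$ passes to the $a_j$, and $f=gh=\sum_j\lambda_ja_j$ holds $\nu$-a.e.\ with no convergence difficulties, since at each point only one summand is nonzero.

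The crux is to show that each $a_j$ is, up to a universal constant, a $T^p_q(\nu,\om)$-atom. Say $b_j$ is supported in a tent $T=T(I)$ with $\nu\textrm{-ess}\sup_{z\in T}|b_j(z)|\le\om(T)^{-1/p}$. Then $a_j$ is supported in $T$ and
$$
\int_T|a_j(z)|^q\om(T(z))\,d\nu(z)\le\om(T)^{-q/p}\int_T|h(z)|^q\om(T(z))\,d\nu(z).
$$
To finish I would invoke the definition of $C_{q,\nu}(h)$: for a standard tent $T=T(a)$ one chooses $\z$ with $a\in\Gamma(\z)$ to obtain $\int_{T(a)}|h|^q\om(T(z))\,d\nu(z)\le\om(T(a))\,\|h\|^q_{T^\infty_q(\nu,\om)}\lesssim\om(T(a))$, and the case $|I|\ge1$ follows by covering $T(I)$ with boundedly many such tents together with a compact piece near the origin. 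Hence $\int_T|a_j|^q\om(T(z))\,d\nu(z)\lesssim\om(T)^{(p-q)/p}$, so that $C^{-1/q}a_j$ is a genuine $T^p_q(\nu,\om)$-atom for a uniform constant $C$; absorbing $C^{1/q}$ into $\lambda_j$ leaves the $\ell^p$-estimate intact.

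Finally, the norm equivalence $\sum_j|\lambda_j|^p\asymp\|f\|^p_{T^p_q(\nu,\om)}$ follows in one direction directly from the construction, as $\sum_j|\lambda_j|^p\asymp\|g\|^p_{T^p_\infty(\nu,\om)}\asymp\|f\|^p_{T^p_q(\nu,\om)}$. For the reverse inequality I would exploit that $p\le q$: by disjointness of supports, $A^q_{q,\nu}(f)=\sum_j|\lambda_j|^qA^q_{q,\nu}(a_j)$ pointwise, and the elementary inequality $(\sum_jx_j)^{p/q}\le\sum_jx_j^{p/q}$ gives $\|f\|^p_{T^p_q(\nu,\om)}\le\sum_j|\lambda_j|^p\|a_j\|^p_{T^p_q(\nu,\om)}\lesssim\sum_j|\lambda_j|^p$. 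I expect the main obstacle to be the bookkeeping in the atom estimate for the nonstandard tents $T(I)$ with $|I|\ge1$, where the support and normalization of the atoms produced by Theorem~\ref{Proposition:atoms} must be matched carefully against the $T^\infty_q$-bound on $h$; the remaining steps are routine.
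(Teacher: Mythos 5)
Your proposal is correct and follows essentially the same route as the paper: factorize $f=gh$ via Theorem~\ref{Thm:Factorization-tent-spaces}, atomically decompose the $T^p_\infty$-factor via Theorem~\ref{Proposition:atoms}, and multiply each $T^p_\infty$-atom by $h$ (suitably normalized) to produce the $T^p_q$-atoms, with the coefficient estimate inherited from $\sum_j|\lambda_j|^p\asymp\|g\|^p_{T^p_\infty(\nu,\om)}\asymp\|f\|^p_{T^p_q(\nu,\om)}$. Your separate verification of the reverse inequality via $(\sum_jx_j)^{p/q}\le\sum_jx_j^{p/q}$ is harmless but redundant, since the two-sided equivalence already follows from the factorization and the $T^p_\infty$ decomposition.
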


\begin{proof}
Let $f\in T^p_q(\nu,\om)$. By Theorems~\ref{Thm:Factorization-tent-spaces} and~\ref{Proposition:atoms}, we can write $f=gh$, where $g=\sum_j\lambda_j'a_j\in T^p_\infty(\nu,\om)$, the functions $a_j$ are $T^p_\infty(\nu,\om)$-atoms, $\sum_{j}|\lambda_j'|^p\asymp\|g\|^p_{T^p_\infty}(\nu,\om)\asymp\|f\|^p_{T^p_q(\nu,\om)}$ and $h\in T^\infty_q(\nu,\om)$ with $\|h\|_{T^\infty_q(\nu,\om)}\lesssim1$. It follows that $f=\sum_j\lambda_jb_j$, where $b_j=ha_j\|h\|_{T^\infty_q(\nu,\om)}^{-1}$ is supported in the same tent $T$ as $a_j$, and
    $$
    \int_T|b_j(z)|^q\om(T(z))\,d\nu(z)\le\frac{\|h\|_{T^\infty_q(\nu,\om)}^{-q}}{(\om(T))^\frac{q}{p}}\int_T|h(z)|^q\om(T(z))\,d\nu(z)
    \le\frac1{(\om(T))^{\frac{q-p}{p}}},
    $$
because $h\in T^\infty_q(\nu,\om)$. Thus $b_j$ are $T^p_q(\nu,\om)$-atoms and the assertion is proved.
\end{proof}

\subsection{Duality - Part 2}

We next identify the dual of $T^p_q(\nu,\om)$ for $0<q<1$ when $\nu$ is a certain discrete measure. To do this, we use the atomic decomposition obtained in the previous section, and two auxiliary results. The first one is stated as Lemma~\ref{LemmaGlobalCarleson} and it is implicit in the proof of \cite[Lemma~5.3]{PelRat}.

\begin{lemma}\label{LemmaGlobalCarleson}
Let $0<\a<\infty$, $\omega\in\DD$ and let $\mu$ be a
positive Borel measure on $\D$. Then there exists
$\la=\la(\omega,\a)>1$ such that
    \begin{equation*}
    \begin{split}
    M_{\om,\a}(\mu)(\z)
    \asymp\sup_{a\in\Gamma (\z)}\frac{\mu(S(a))}{(\omega(S(a)))^\a}
    \asymp\sup_{a\in\Gamma (\z)}\frac{1}{(\omega(S(a)))^\a}\int_\D\left(\frac{1-|a|}{|1-\overline{a}z|}\right)^\la d\mu(z),\quad \z\in\D.
    \end{split}
    \end{equation*}
\end{lemma}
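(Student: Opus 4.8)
The plan is to regard the three quantities in the statement as the suprema
\[
A(\z)=\sup_{\z\in S(a)}\frac{\mu(S(a))}{(\om(S(a)))^\a},\quad
B(\z)=\sup_{a\in\Gamma(\z)}\frac{\mu(S(a))}{(\om(S(a)))^\a},
\]
\[
C(\z)=\sup_{a\in\Gamma(\z)}\frac{1}{(\om(S(a)))^\a}\int_\D\left(\frac{1-|a|}{|1-\overline{a}z|}\right)^\la d\mu(z),
\]
and to prove $A\asymp B$ and $B\asymp C$ separately. Throughout I would use the doubling of $\widehat{\om}$ via Lemma~\ref{Lemma:replacement-Lemma1.1-2.}, in particular the growth estimate in part (ii) and the elementary comparison $\om(S(a))\asymp(1-|a|)\widehat{\om}(|a|)$ valid for $\om\in\DD$.

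First I would establish $A\asymp B$, which is purely geometric. The inequality $B\le A$ is immediate: if $a\in\Gamma(\z)$ then $\z\in T(a)\subset S(a)$, so every square entering the supremum defining $B(\z)$ already contains $\z$. For the reverse, given $\z\in S(a)$ I would dilate $S(a)$ to the square $S(a')$ sitting on the radius of $\z$, with $\arg a'=\arg\z$ and $1-|a'|=2(1-|a|)$. A short computation with the arcs $I_a$ and $I_{a'}$ shows $S(a)\subset S(a')$ and $a'\in\Gamma(\z)$, while doubling gives $\om(S(a'))\asymp\om(S(a))$. Hence $\frac{\mu(S(a))}{(\om(S(a)))^\a}\lesssim\frac{\mu(S(a'))}{(\om(S(a')))^\a}\le B(\z)$, and taking the supremum yields $A\lesssim B$. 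For $\z$ close to the origin the dilated radius may reach the origin and the choice of $a'$ must be modified, but the same idea applies with suitably adjusted constants.

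Next I would prove $B\asymp C$ for a value of $\la$ fixed below. Since $|1-\overline{a}z|\asymp 1-|a|$ on $S(a)$, the integrand is bounded below by a positive constant there, so $\int_\D\left(\frac{1-|a|}{|1-\overline{a}z|}\right)^\la d\mu\gtrsim\mu(S(a))$, which gives $B\lesssim C$. The real content is the reverse bound. Fixing $a\in\Gamma(\z)$, I would split $\D$ into the dyadic annuli $E_k=\{z:2^k(1-|a|)\le|1-\overline{a}z|<2^{k+1}(1-|a|)\}$, $k\ge0$, on which the integrand is $\asymp 2^{-k\la}$. Each $E_k$ lies in a Carleson square $S(b_k)$ with $\arg b_k=\arg a$ and $1-|b_k|\asymp 2^k(1-|a|)$; since $|b_k|<|a|<|\z|$ one checks $b_k\in\Gamma(\z)$, so $\mu(S(b_k))\le B(\z)\,(\om(S(b_k)))^\a$. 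Lemma~\ref{Lemma:replacement-Lemma1.1-2.}(ii) gives $\om(S(b_k))\lesssim 2^{k(1+\b)}\om(S(a))$, whence
\[
\int_\D\left(\frac{1-|a|}{|1-\overline{a}z|}\right)^\la d\mu
\lesssim\sum_{k\ge0}2^{-k\la}\mu(S(b_k))
\lesssim B(\z)\,(\om(S(a)))^\a\sum_{k\ge0}2^{-k(\la-(1+\b)\a)}.
\]
Choosing $\la>(1+\b_0)\a$ makes the series converge; dividing by $(\om(S(a)))^\a$ and taking the supremum over $a\in\Gamma(\z)$ gives $C\lesssim B$.

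The main obstacle is this last estimate: the smoothed integral gathers mass from squares of every scale around $a/|a|$, and pinning it to $B(\z)$ requires both the geometric fact that each annulus sits inside a dilate of $S(a)$ still lying in $\Gamma(\z)$ and, crucially, the polynomial growth of $\om(S(b_k))$ in the dilation factor supplied by doubling. This is precisely what forces the quantitative restriction $\la>(1+\b_0)\a$ and accounts for the dependence $\la=\la(\om,\a)$ in the statement. The only remaining care is the bookkeeping when $2^k(1-|a|)\ge1$, where $S(b_k)$ is read as the whole disc and the corresponding tail of the series is estimated directly.
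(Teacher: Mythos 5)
Your argument is correct, and it supplies a proof that the paper itself does not give: the paper simply remarks that Lemma~\ref{LemmaGlobalCarleson} ``is implicit in the proof of \cite[Lemma~5.3]{PelRat}'' and moves on. Your two steps are exactly the standard way to fill this in. The geometric comparison $A\asymp B$ via the dilated square $S(a')$ with $\arg a'=\arg\z$ and $1-|a'|=2(1-|a|)$ is sound: $S(a)\subset S(a')$ follows from $|\arg\z-\arg a|\le\frac{1-|a|}{2}$, the point $a'$ lies in $\Gamma(\z)$ because $|a'|<|a|\le|\z|$, and $\om(S(a'))\asymp\om(S(a))$ is precisely the doubling of $\widehat\om$. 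The dyadic annulus decomposition for $C\lesssim B$ is also the right mechanism, and the two facts you isolate are indeed the crux: (1) the containing squares $S(b_k)$ stay inside $\Gamma(\z)$ because shrinking $|b_k|$ along the ray $\arg b_k=\arg a$ only enlarges the admissible angular window $\frac12(1-|b_k|/|\z|)$; and (2) Lemma~\ref{Lemma:replacement-Lemma1.1-2.}(ii) gives $\om(S(b_k))\lesssim 2^{k(1+\b_0)}\om(S(a))$, which is what makes the geometric series converge for $\la>(1+\b_0)\a$ and explains the dependence $\la=\la(\om,\a)$. The only genuinely delicate point is the one you already flag: for $|a|$ (or $2^k(1-|a|)$) so large that the construction leaves the disc, one must adopt the convention that the corresponding ``square'' is all of $\D$ and use that $\Gamma(\z)$ contains the radial segment $(0,\z)$, so that $B(\z)\gtrsim\mu(\D)/(\om(\D))^\a$; without some such convention near the origin the comparison with the integral term can fail, but this is the same level of care the paper itself exercises elsewhere (e.g.\ in the proof of Theorem~\ref{Thm:tent-spaces-duality}, which is only carried out for $|z|\ge\frac56$). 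No gap beyond that bookkeeping.
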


The proofs of Theorems~\ref{Thm:tent-spaces-duality} and~\ref{Theorem:atoms-T^1_r} are independent of the aperture $\alpha\in(0,\pi)$ of the lens chosen in the definition of $T^p_q(\nu,\om)$-spaces. This fact will be needed in the proof of the next lemma.

\begin{lemma}\label{Lemma:cone-integral-nu}
Let $0<p<\infty$ and $\om\in\DD$, and let $\nu$ be a positive Borel measure on~$\D$, finite on compact sets. Then there exists $\lambda_0=\lambda_0(p,\om)\ge1$ such that
    \begin{equation}\label{Eq:cone-integral-nu}
    \int_\D\left(\int_\D\left(\frac{1-|z|}{|1-\overline{\z}z|}\right)^\lambda d\nu(z)\right)^p\om(\z)\,dA(\z)\asymp\int_\D\left(\nu(\Gamma(\z))\right)^p\om(\z)\,dA(\z)+\nu(\{0\})
    \end{equation}
for each $\lambda>\lambda_0$.
\end{lemma}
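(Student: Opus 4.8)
The plan is to prove the two one-sided estimates separately and to dispose of the point mass at the origin first. Since the kernel $\left(\frac{1-|z|}{|1-\overline{\z}z|}\right)^\lambda$ equals $1$ at $z=0$, splitting $\nu=\nu(\{0\})\delta_0+\nu'$ gives $\int_\D(\cdots)^\lambda\,d\nu=\nu(\{0\})+\int_\D(\cdots)^\lambda\,d\nu'$, and since $0\notin\Gamma(\z)$ for every $\z$ we have $\nu'(\Gamma(\z))=\nu(\Gamma(\z))$; the elementary inequality $(x+y)^p\asymp x^p+y^p$ then reduces matters to the case $\nu(\{0\})=0$, the origin contributing exactly the additive term on the right. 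The lower bound $\gtrsim$ is immediate: for $z\in\Gamma(\z)$ one has $|1-\overline{\z}z|\asymp1-|z|$, so the kernel is $\asymp1$ there, the inner integral dominates $\nu(\Gamma(\z))$ pointwise, and raising to the power $p$ and integrating against $\om$ yields the claim.

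For the upper bound when $p\ge1$ I would argue by duality on $L^p_\om$. Writing $I(\z)$ for the inner integral and testing against $0\le\phi\in L^{p'}_\om$, Fubini turns $\int_\D I\phi\,\om\,dA$ into $\int_\D G\,d\nu$, where $G(z)=(1-|z|)^\lambda\int_\D\frac{\phi(\z)\om(\z)}{|1-\overline{\z}z|^\lambda}\,dA(\z)$. Splitting this $\z$-integral into the dyadic annuli $|1-\overline{z}\z|\approx2^m(1-|z|)$, bounding the average of $\phi$ over each associated Carleson box $S_m$ by $M_\om(\phi)(\z)$ for any $\z\in T(z)$ (each $S_m$ contains $T(z)$), and invoking the radial doubling estimate $\om(S_m)\lesssim2^{m(1+\beta)}\om(T(z))$ from Lemma~\ref{Lemma:replacement-Lemma1.1-2.}, one obtains $G(z)\lesssim\om(T(z))\inf_{\z\in T(z)}M_\om(\phi)(\z)\lesssim\int_{T(z)}M_\om(\phi)\,\om\,dA$ as soon as $\lambda>1+\beta$. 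A second Fubini (using $\z\in T(z)\iff z\in\Gamma(\z)$) gives $\int_\D I\phi\,\om\,dA\lesssim\int_\D M_\om(\phi)(\z)\,\nu(\Gamma(\z))\,\om(\z)\,dA(\z)$, and Hölder together with the boundedness of $M_\om$ on $L^{p'}_\om$ (Theorem~\ref{co:maxbou} with $\alpha=1$) closes this case; here $\lambda_0$ depends only on $\om$ through $\beta$.

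The case $0<p<1$ is where duality is unavailable and constitutes the main difficulty. I would discretize the kernel, writing $I(\z)\asymp\sum_{j\ge0}2^{-j\lambda}\nu(\Omega_{2^{-j}}(\z))$ with $\Omega_t(\z)=\{z:\frac{1-|z|}{|1-\overline{\z}z|}>t\}$, and use subadditivity $(\sum a_j)^p\le\sum a_j^p$ to get $\int_\D I^p\om\,dA\lesssim\sum_j2^{-j\lambda p}\int_\D\nu(\Omega_{2^{-j}}(\z))^p\om\,dA$. The set $\Omega_t(\z)$ is a funnel that is angularly thin near $\T$ and opens to full aperture toward the origin; it is \emph{not} contained in any single cone, but a Whitney-type covering shows it is contained in the union of $O(t^{-1})$ base cones $\Gamma(w_i)$ whose vertices have modulus comparable to $|\z|$. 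Quantifying the aperture-independence recorded before the lemma into a polynomial comparison $\int_\D\nu(\Gamma_\alpha(\z))^p\om\,dA\lesssim\alpha^{c}\int_\D\nu(\Gamma(\z))^p\om\,dA$ with $c=c(p,\om)$ (from radial doubling), one bounds $\int_\D\nu(\Omega_{2^{-j}}(\z))^p\om\,dA$ by a constant times $2^{jc}\int_\D\nu(\Gamma(\z))^p\om\,dA$ and sums the geometric series $\sum_j2^{-j\lambda p+jc}$, whose convergence forces $\lambda>\lambda_0(p,\om)$.

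The hard part is exactly this last step: making the covering of $\Omega_t(\z)$ quantitative with the count $O(t^{-1})$, controlling the mild logarithmic loss that the dyadic-in-radius bookkeeping threatens to introduce, and upgrading the qualitative aperture-independence into an explicit polynomial-in-$\alpha$ comparison through the doubling estimates of Lemma~\ref{Lemma:replacement-Lemma1.1-2.}. Once these are secured, choosing $\lambda_0(p,\om)$ large enough to make both the $p\ge1$ threshold ($\lambda>1+\beta$) and the $p<1$ series converge completes the proof.
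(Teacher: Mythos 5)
Your reduction to $\nu(\{0\})=0$, the lower bound, and the case $p\ge1$ are sound and essentially coincide with the paper's argument (Fubini for $p=1$; duality, the dyadic-annuli estimate $\om(S_m)\lesssim 2^{m(1+\b)}\om(T(z))$, and the boundedness of $M_\om$ on $L^{p'}_\om$ for $p>1$). The gap is in the case $0<p<1$, which you correctly identify as the crux but do not actually close. Two of your key claims there fail. First, the level set $\Omega_t(\z)=\{z:\frac{1-|z|}{|1-\overline{\z}z|}>t\}$ is \emph{not} contained in a union of cones whose vertices have modulus comparable to $|\z|$: since $|1-\overline{\z}z|\asymp|\arg(z\overline{\z})|+(1-|z|)+(1-|\z|)$, the set $\Omega_t(\z)$ contains points with $1-|z|\asymp t(1-|\z|)$ (e.g.\ on the radius through $\z$), whereas every $\Gamma(w)$ with $1-|w|\asymp1-|\z|$ contains only points with $1-|z|>1-|w|$. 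Any correct covering must use cones reaching a factor $t$ closer to the boundary; the rotation-invariance of $\om\,dA$ then no longer converts the covering into a pure counting loss, and one is forced into a radial comparison of $\int\nu(\Gamma(w))^p\om\,dA$ across different radii --- precisely the doubling argument the paper carries out in \eqref{eq:j3} --- now with $t$-dependent constants and a count that is not $O(t^{-1})$. Second, the ``polynomial aperture comparison'' you invoke is not available at this point: the remark \emph{before} the lemma only says that the proofs of the duality and atomic-decomposition theorems do not depend on the aperture; the equivalence of the quantities $\int_\D\nu(\Gamma_\a(\z))^p\om\,dA$ for different apertures is stated \emph{after} the lemma as a corollary of it. For $p<1$ a direct covering proof of that comparison meets the same obstruction (finitely many unit-aperture cones with vertices on the circle $|w|=|\z|$ cannot cover $\Gamma_\a(\z)$ near $|z|=|\z|$, where they all pinch), so your argument reduces the hard case to an unproved statement of essentially the same depth.

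For comparison, the paper handles $0<p<1$ by a different mechanism: it smooths $\nu$ into a function $F(\cdot,\r,\nu)$ by averaging over pseudohyperbolic discs $D(z,\r(z))$, so that $\int h_\la\,d\nu\asymp\int F h_\la\,dh$; it then reduces the desired bound to the inequality \eqref{3.3}, which is a $T^1_q(h,\om)$-estimate with $q=1/p>1$, and verifies that inequality on $T^1_q(h,\om)$-atoms via the atomic decomposition (Theorem~\ref{Theorem:atoms-T^1_r}), splitting the outer integral over $2S$ and $\D\setminus2S$. If you want to salvage a covering-type argument, you would need to first prove a quantitative aperture/translation comparison for $p<1$ by an independent route (good-$\la$ or averaging), at which point you have essentially reproduced the paper's smoothing step.
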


\begin{proof} Without loss of generality we may assume that $\nu(\{0\})=0$. By integrating only over $\Gamma(\z)$ one sees that the right hand side is dominated by a sufficiently large constant, depending on $\lambda$ and $p$, times the left hand side. The opposite inequality will be proved in several steps.

The case $p=1$ follows by Fubini's theorem and Lemma~\ref{Lemma:replacement-Lemma1.1-2.}.

Let $1<p<\infty$ and $g\in L^{p'}_\om\simeq(L^p_\om)^\star$, and write $h_\lambda(z,\z)=\left(\frac{1-|z|}{|1-\overline{\z}z|}\right)^\lambda$ for short. Then, by two applications of Fubini's theorem, H\"older's inequality and Lemma~\ref{LemmaGlobalCarleson}, we get
    \begin{equation*}
    \begin{split}
    \left|\left\langle g,\int_\D h_\lambda(z,\cdot) d\nu(z)\right\rangle_{L^2_\om}\right|
    &\le\int_\D\sup_{z\in\Gamma(u)}\left(\frac{\int_\D|g(\z)|h_\lambda(z,\z) \om(\z)\,dA(\z)}{\om(T(z))}\right)\nu(\Gamma(u))\om(u)\,dA(u)\\
    &\lesssim\left(\int_\D\left(\nu(\Gamma(u))\right)^p\om(u)\,dA(u)\right)^\frac1{p}
    \|M_\om(g)\|_{L^{p'}_\om}.
    \end{split}
    \end{equation*}
Therefore Theorem~\ref{co:maxbou} yields
    \begin{equation*}
    \begin{split}
    \left|\left\langle g,\int_\D h_\lambda(z,\cdot) d\nu(z)\right\rangle_{L^2_\om}\right|
    &\lesssim\left(\int_\D\left(\nu(\Gamma(u))\right)^p\om(u)\,dA(u)\right)^\frac1{p}\|g\|_{L_\om^{p'}},
    \end{split}
    \end{equation*}
and the assertion for $1<p<\infty$ follows by the duality.

Let now $0<p<1$. Let $\r\in(0,1)$ to be fixed later and define $\r(z)=\r|z|(1-|z|)$ for all $z\in\D$.
Since $h_\lambda(z_1,\z)\asymp h_\lambda(z_2,\z)$ for all $\z\in \D$ whenever $z_1,z_2$ belong to a hyperbolically bounded region,
Fubini's theorem yields
  \begin{equation}\begin{split}\label{eq:j1}
    \int_\D h_\lambda(z,\z)\,d\nu(z) &=\int_\D\left(\int_\D\frac{h_\lambda(z,\z)}{|D(z,\r(z))|} \chi_{D(z,\r(z))}(u)d\nu(z) \right)\,dA(u)
    \\ &  \asymp  \int_\D F(u,\r,\nu)h_\lambda(u,\z)\,dh(u),
    \end{split}\end{equation}
where
    $$
    F(u,\r,\nu)=\int_\D\frac{\chi_{D(z,\r(z))}(u)}{\r^2|z|^2}\,d\nu(z).
    $$
Here $\chi_{D(z,\r(z))}(u)$ denotes the function that is equal to $1$ if $u\in D(z,\r(z))$ and is 0 otherwise.
Let now $\alpha\in\left(0,\frac12\right)$ be fixed and choose $\r$ small enough such that $\Gamma_\a(\z)\cap D(z,\r(z))=\emptyset$ for
all $z\in \D\setminus \Gamma\left(\frac{1+|\z|}{2}e^{i\arg\z}\right)$. Then
Fubini's theorem gives
    \begin{equation}\label{eq:j2}
    \begin{split}
   \int_{\Gamma_\a(\z)}F(u,\r,\nu)\,dh(u)
    \lesssim \nu\left(\Gamma\left(\frac{1+|\z|}{2}e^{i\arg\z}\right)\right).
    \end{split}
    \end{equation}
Assume without loss of generality that $\int_0^1\om(r)\,dr=1$. Let $C$ and $\b$ be those from Lemma~\ref{Lemma:replacement-Lemma1.1-2.}(ii), and define $r_n$ by $\widehat{\om}(r_n)=(2C)^{-n}$ for $n\in\N\cup\{0\}$. Then
    $$
    2C=\frac{\widehat{\om}(r_n)}{\widehat{\om}(r_{n+1})}\le C\left(\frac{1-r_n}{1-r_{n+1}}\right)^{\b}
    $$
for all $n$. Hence
$1-r_{n+1}\le \frac{1-r_n}{2^{\frac{1}{\b}}}$, and therefore for $N=\min\{k\in\N:k>\b+1\}$ we have
    $
    1-r_{n+N}\le2^{\frac{1-N}{\b}}(1-r_{n+1})\le (1-r_{n+1})/2.
    $
This implies $\frac{1+r_{n+1}}{2}\le r_{n+N}$ for $n\in\N\cup\{0\}$, 
and hence, by Lemma~\ref{Lemma:replacement-Lemma1.1-2.},
    \begin{equation}\label{eq:j3}
    \begin{split}
    &\int_\D\left(\nu\left(\Gamma\left(\frac{1+|\z|}{2}e^{i\arg\z}\right)\right)\right)^p\om(\z)\,dA(\z)\\
    &\lesssim\int_0^{2\pi}\sum_{n=0}^\infty\left(\nu\left(\Gamma\left(\frac{1+r_{n+1}}{2}e^{i\t}\right)\right)\right)^p\int_{r_n}^{r_{n+1}}\om(r)\,dr\,d\theta\\
   &\lesssim\int_0^{2\pi}\sum_{n=0}^\infty\left(\nu\left(\Gamma\left(r_{n+N}e^{i\t}\right)\right)\right)^p\int_{r_{n+N}}^{r_{n+N+1}}\om(r)\,dr\,d\theta
   \le\int_\D\nu\left(\Gamma\left(\z\right)\right)^p\om(\z)\,dA(\z).
    \end{split}
    \end{equation}
Therefore it suffices to show that
    \begin{equation}
    \begin{split}\label{eq:j4}
    &\int_\D\left(\int_\D F(z,\r,\nu)h_\lambda(z,\z)\,dh(z)\right)^p\om(\z)\,dA(\z)\\
    &\lesssim\int_\D\left(\int_{\Gamma_\alpha(\z)}F(z,\r,\nu)\,dh(z)\right)^p\om(\z)\,dA(\z),
    \end{split}
    \end{equation}
because by combining this with \eqref{eq:j1}, \eqref{eq:j2} and \eqref{eq:j3}, we deduce
    \begin{equation}\label{4}
    \begin{split}
    &\int_\D\left( \int_\D h_\lambda(z,\z) d\nu(z)\right)^p\om(\z)\,dA(\z)
    \\ & \asymp \int_\D\left(\int_\D F(z,\r,\nu)h_\lambda(z,\z)\,dh(z)\right)^p\om(\z)\,dA(\z)\\
    &\lesssim\int_\D\left(\int_{\Gamma_\alpha(\z)}F(z,\r,\nu)\,dh(z)\right)^p\om(\z)\,dA(\z)\\
    &\lesssim\int_\D\left(\nu\left(\Gamma\left(\frac{1+|\z|}{2}e^{i\arg\z}\right)\right)\right)^p\om(\z)\,dA(\z)
    \\  & \lesssim \int_\D\nu\left(\Gamma\left(\z\right)\right)^p\om(\z)\,dA(\z).
    \end{split}
    \end{equation}
Writing $g^q(z)=F(z,\r,\nu)$, where $q=1/p\in(1,\infty)$, \eqref{eq:j4} becomes
 \begin{equation}\label{3.3}
    \int_\D\left(\int_\D g^q(z)h_\lambda(z,\z)\,dh(z)\right)^{1/q}\om(\z)\,dA(\z)\lesssim\|g\|_{T^1_q\left(h,\om\right)},
    \end{equation}
where the norm in $T^1_q\left(h,\om\right)$ is taken with respect to lenses $\Gamma_\alpha(\cdot)$. We will show that \eqref{3.3} is actually valid for all measurable functions $g$.

As observed before, in Theorem~\ref{Theorem:atoms-T^1_r} the aperture $\a$ does not play any role, and hence it suffices to show that the left hand side of \eqref{3.3} is bounded for a $T^1_q(h,\om)$-atom $b$ supported in a tent $T=T(I)$. Let $S=S(I)\supset T(I)$.
We split the outer integral into two parts; $2S$ and $\D\setminus 2S$. By H\"{o}lder's inequality, Fubini's theorem, Lemma~\ref{Lemma:replacement-Lemma1.1-2.}, and the definition of $T^1_q(h,\om)$-atoms, we deduce
    \begin{equation*}
    \begin{split}
    &\int_{2S}\left(\int_{S} |b(z)|^qh_\lambda(z,\z)\,dh(z)\right)^{\frac1q}\om(\z)\,dA(\z)\\
    &\lesssim(\om(T))^{\frac1{q'}}\left(\int_{2S}\int_{S}|b(z)|^q h_\lambda(z,\z)\,dh(z)\om(\z)\,dA(\z)\right)^{\frac1{q}}\\
    &\le(\om(T))^{\frac1{q'}}\left(\int_{S}\left(\int_{\D} h_\lambda(z,\z) \om(\z)\,dA(\z)\right)|b(z)|^q\,dh(z)\right)^{\frac1{q}}\\
    &\lesssim(\om(T))^{\frac1{q'}}\left(\int_{T}|b(z)|^q\om(T(z))\,dh(z)\right)^{\frac1{q}}\lesssim1.
    \end{split}
    \end{equation*}
For the integral over $\D\setminus 2S$, take $\b=\log_2C$, where $C=C(\om)$ is the constant from the definition of $\DD$. By Lemma~\ref{Lemma:replacement-Lemma1.1-2.} we may choose $\lambda>(\beta+1)q$ large enough such that $\frac{(1-t)^\lambda}{\om(T(t))}$ is a essentially decreasing on $[0,1)$. This together with standard estimates and the definition of $T^1_q(h,\om)$-atoms gives
    \begin{equation*}
    \begin{split}
    &\int_{\D\setminus 2S}\left(\int_{S}|b(z)|^qh_\lambda(z,\z)\,dh(z)\right)^{\frac1q}\om(\z)\,dA(\z)\\
    &\lesssim\left(\frac{|I|^\lambda}{\om(T)}\right)^{\frac1q}
    \sum_{k=1}^\infty \int_{2^{k+1}S\setminus 2^kS}\left(\int_{T}\frac{|b(z)|^q}{|1-\overline{\z}z|^\lambda} \om(T(z))\,dh(z)\right)^{\frac1q}\om(\z)\,dA(\z)\\
    &\lesssim \frac{1}{\om(T)}
    \sum_{k=1}^\infty \frac{1}{2^{k\frac{\lambda}{q}}}\int_{2^{k+1}S\setminus 2^kS}\om(\z)\,dA(\z)
    \lesssim\sum_{k=1}^\infty 2^{k\left(\beta +1-\frac{\lambda}{q}\right)}<\infty,
    \end{split}
    \end{equation*}
and thus \eqref{3.3} is proved.
\end{proof}

The proof of Lemma~\ref{Lemma:cone-integral-nu} shows that we may replace $\Gamma(\z)$ by $\Gamma_\alpha(\z)$ for any $\alpha\in(0,\pi)$ in \eqref{Eq:cone-integral-nu}. It follows that the space $T^p_q(\nu,\om)$ is independent of the aperture of the lens appearing in the definition, and the quasi-norms obtained for different lenses are equivalent.

If $\nu=\sum_k \d_{z_k}$, where $\{z_k\}$ is a separated sequence, then we write $T^p_q(\nu,\om)=T^p_q(\{z_k\},\om)$.

\begin{proposition}\label{Proposition:duality-sequence-tent}
Let $0<q<1<p<\infty$, $\om\in\DD$ and $\{z_k\}$ be a separated sequence such that $z_k\ne0$ for all $k$. If $L$ is a continuous linear functional on $T^p_q(\{z_k\},\om)$, then there exists a unique $g\in T^{p'}_\infty(\{z_k\},\om)$ such that
    \begin{equation}\label{Eq:duality-pairing-tent-spaces-sequences}
    L(f)=\langle f,g\rangle_{T^2_2(\{z_k\},\om)}=\sum_k f(z_k)\overline{g(z_k)}\om(T(z_k)),\quad f\in T^p_q(\{z_k\},\om),
    \end{equation}
and $\|L\|\asymp\|g\|_{T^{p'}_{\infty}(\{z_k\},\om)}$.

Conversely, each $g\in T^{p'}_\infty(\{z_k\},\om)$ induces a continuous linear functional on $T^p_q(\{z_k\},\om)$ by means of \eqref{Eq:duality-pairing-tent-spaces-sequences}.
\end{proposition}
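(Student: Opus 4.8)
The plan is to prove the two inclusions separately, starting with the statement of the last sentence, namely that every $g\in T^{p'}_\infty(\{z_k\},\om)$ induces a bounded functional. For this direction I would unwind the pairing by Fubini's theorem, using $\om(T(z_k))=\int_{T(z_k)}\om\,dA$ together with the equivalence $z_k\in\Gamma(\z)\iff\z\in T(z_k)$, to write
\[
|L(f)|\le\int_\D\Big(\sum_{z_k\in\Gamma(\z)}|f(z_k)|\,|g(z_k)|\Big)\om(\z)\,dA(\z).
\]
Here the hypothesis $q<1$ is decisive: since $0<q<1$ one has $\sum_k|a_k|\le(\sum_k|a_k|^q)^{1/q}$, so the inner sum is dominated by $A_{\infty,\nu}(g)(\z)\,A_{q,\nu}(f)(\z)$, and H\"older's inequality with exponents $p,p'$ closes the estimate, giving $\|L\|\le\|g\|_{T^{p'}_\infty(\{z_k\},\om)}$. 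This half requires no separation and is valid for arbitrary $\nu$.

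For the converse I would first recover $g$ from $L$. The finitely supported sequences are dense in $T^p_q(\{z_k\},\om)$ (truncation converges by dominated convergence against $A_{q,\nu}(f)^p\in L^1_\om$), so putting $\overline{g(z_k)}=L(e_k)/\om(T(z_k))$, where $e_k$ is the unit mass at $z_k$, produces a candidate with $L(f)=\langle f,g\rangle_{T^2_2(\{z_k\},\om)}$ on a dense subspace; uniqueness is immediate by testing on the $e_k$. The substance is the bound $\|g\|_{T^{p'}_\infty}\lesssim\|L\|$, which I would obtain by duality. Since $(L^p_\om)^\star\simeq L^{p'}_\om$, it suffices to control $\int_\D A_{\infty,\nu}(g)\phi\,\om\,dA$ for nonnegative $\phi$ with $\|\phi\|_{L^p_\om}\le1$. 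Selecting (measurably) for each $\z$ a near-maximiser $z_{k(\z)}\in\Gamma(\z)$ with $|g(z_{k(\z)})|\ge\tfrac12 A_{\infty,\nu}(g)(\z)$, I set $E_k=\{\z:k(\z)=k\}$, which are disjoint and satisfy $E_k\subset T(z_k)$, and define the test sequence $f(z_k)=\frac{g(z_k)}{|g(z_k)|}v_k$ with $v_k=\om(T(z_k))^{-1}\int_{E_k}\phi\,\om\,dA$. This is arranged exactly so that $L(f)=\int_\D A_{\infty,\nu}(g)\phi\,\om\,dA$ up to the harmless factor $\tfrac12$ (a priori finiteness is ensured by first truncating $g$ to $\{|z_k|\le 1-1/N\}$ and letting $N\to\infty$).

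Everything then reduces to the single estimate $\|f\|_{T^p_q(\{z_k\},\om)}\lesssim\|\phi\|_{L^p_\om}$, which I expect to be the main obstacle. Writing $r=p/q>1$ and dualising the $L^r_\om$-norm of $\z\mapsto\sum_{z_k\in\Gamma(\z)}v_k^q$ against a nonnegative $\psi$ with $\|\psi\|_{L^{r'}_\om}\le1$, Fubini turns the pairing into $\sum_k v_k^q\int_{T(z_k)}\psi\,\om\,dA\lesssim\sum_k v_k^q\,\om(T(z_k))M_\om(\psi)(z_k)$. Inserting the definition of $v_k$, applying H\"older in $k$ with exponents $p/q$ and $p/(p-q)$, and crucially using the disjointness of the $E_k$ to sum $\sum_k\int_{E_k}\phi^p\,\om\,dA\le\|\phi\|^p_{L^p_\om}$, leaves the factor $\big(\sum_k\om(T(z_k))M_\om(\psi)(z_k)^{p/(p-q)}\big)^{(p-q)/p}$. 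The decisive point is that $\sigma=\sum_k\om(T(z_k))\delta_{z_k}$ is a Carleson measure, i.e. $\sigma(S(a))\lesssim\om(S(a))$; this is exactly where the separation of $\{z_k\}$ enters, since it bounds the number of points at each hyperbolic scale, and together with the doubling of $\widehat{\om}$ from Lemma~\ref{Lemma:replacement-Lemma1.1-2.} the scalewise geometric series converges. Granting this, Theorem~\ref{co:maxbou} (with $\alpha=1$ and the Carleson condition $M_\om(\sigma)\in L^\infty$) gives $\int_\D M_\om(\psi)^{p/(p-q)}\,d\sigma\lesssim\|\psi\|^{p/(p-q)}_{L^{p/(p-q)}_\om}\le1$, and collecting exponents yields $\|f\|_{T^p_q}\lesssim\|\phi\|_{L^p_\om}$. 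Taking the supremum over $\phi$ then gives $\|g\|_{T^{p'}_\infty}\lesssim\|L\|$, which combined with the first half yields $\|L\|\asymp\|g\|_{T^{p'}_\infty}$.
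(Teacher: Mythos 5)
The forward direction (each $g\in T^{p'}_\infty(\{z_k\},\om)$ induces a bounded functional, via Fubini, the inequality $\sum_k a_k\le(\sum_k a_k^q)^{1/q}$ for $0<q<1$, and H\"older) is correct and coincides with the paper's argument. The converse, however, has a genuine gap at the step you yourself call decisive: the claim that $\sigma=\sum_k\om(T(z_k))\delta_{z_k}$ satisfies $M_\om(\sigma)\in L^\infty$. Separation bounds the number of points $z_k\in S(a)$ with $1-|z_k|\asymp 2^{-j}(1-|a|)$ by $C2^j$, and each such point contributes $\om(T(z_k))\asymp 2^{-j}(1-|a|)\widehat{\om}(r_j)$ with $1-r_j=2^{-j}(1-|a|)$; for a full $\d$-lattice this gives $\sigma(S(a))\gtrsim(1-|a|)\sum_j\widehat{\om}(r_j)$, so you need $\sum_j\widehat{\om}(r_j)\lesssim\widehat{\om}(|a|)$. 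The doubling hypothesis $\widehat{\om}(r)\le C\widehat{\om}(\frac{1+r}{2})$ controls this ratio from the wrong side: it prevents $\widehat{\om}$ from decaying too fast, but it does not force the geometric decay your ``scalewise geometric series'' requires. For $\om(r)=(1-r)^{-1}\left(\log\frac{e}{1-r}\right)^{-2}\in\DD$ one has $\widehat{\om}(r)=\left(\log\frac{e}{1-r}\right)^{-1}$, hence $\widehat{\om}(r_j)\asymp\left(j+\log\frac{e}{1-|a|}\right)^{-1}$ and the series diverges; indeed $\sigma(\D)=\infty$ for a $\d$-lattice. So $M_\om(\sigma)\in L^\infty$ fails precisely for the rapidly increasing weights that motivate the class $\DD$ (it does hold for standard weights, where $\widehat{\om}(r_j)$ decays geometrically), and with it the application of Theorem~\ref{co:maxbou} and the bound on your second H\"older factor collapse.

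This is exactly the obstruction the paper's proof is built to avoid. Your reduction up to the inequality $\|f\|_{T^p_q(\{z_k\},\om)}\lesssim\|\phi\|_{L^p_\om}$ (recovering $g$ from the $e_k$, the disjoint sets $E_k\subset T(z_k)$, the test sequence $v_k$) is sensible and parallels Luecking's setup, but the paper then proceeds quite differently: it places the points at the centres of dyadic top halves, encodes the data in the level functions $\vp_k,\psi_k$, and invokes Luecking's stopping-time lemma with the choice $\vp_k=\psi_k^{p'-1}$ to extract, for each $\z$, a lacunary subsequence along which $\bigl(\sum_k\widetilde{\vp}_k^q(\z)\bigr)^{1/q}\asymp\max_k\widetilde{\vp}_k(\z)$. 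That pointwise comparison is what substitutes for the (false) Carleson property of $\sigma$; some argument of this type is needed to close your final estimate.
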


\begin{proof}
Let $0<q<1<p<\infty$ so that $p'>1$, and let $g\in T^{p'}_\infty(\{z_k\},\om)$. Then Fubini's theorem, the inequality $(\sum a_k)^q\le\sum a_k^q$ and H\"older's inequality yield $|\langle f,g\rangle_{T^2_2(\{z_k\},\om)}|\le\|g\|_{T^{p'}_{\infty}(\{z_k\},\om)}\|f\|_{T^{p}_{q}(\{z_k\},\om)}$.
Therefore each $g\in T^{p'}_\infty(\{z_k\},\om)$ induces a continuous linear functional $L$ on $T^p_q(\{z_k\},\om)$ by means of \eqref{Eq:duality-pairing-tent-spaces-sequences} and $\|L\|\le\|g\|_{T^{p'}_\infty(\{z_k\},\om)}$.

Let now $L$ be a linear continuous  functional on $T^p_q(\{z_k\},\om)$, and consider the sequence $\{b_k\}$ such that $b_k=L(e_k)/\om(T(z_k))$, where $e_k$ is the function with $e_k(z_j)=\d_{jk}$. Let $g$ be such that $\overline{g(z_k)}=b_k$ for all $k$. Write $f\in T^p_q(\{z_k\},\om)$ as a linear combination of $e_k$'s. Then Fubini's theorem gives
    $$
    L(f)=\sum_kf(z_k)L(e_k)=\sum_kf(z_k)b_k\om(T(z_k))=\int_\D\left(\sum_{z_k\in\Gamma(\z)}f(z_k)b_k\right)\om(\z)\,dA(\z).
    $$
We need to show that $g\in T^{p'}_\infty(\{z_k\},\om)$, that is,
    \begin{equation}\label{Eq-1}
    \int_\D\left(\sup_{z_k\in\Gamma(\z)}|b_k|\right)^{p'}\om(\z)\,dA(\z)<\infty.
    \end{equation}
Let $\mathcal{Y}$ denote the set of standard dyadic intervals of $\T$, and consider the family $\{R(I):I\in\mathcal{Y}\}$ of dyadic Carleson top halves. Since $\{z_k\}$ is separated by the assumption, there exists $M\in\N$ such that each dyadic top half $R$ contains at most $M$ points $\{z_k\}$. We may assume $M=1$, for otherwise we just have to multiply our estimates by $M$. Lemma~\ref{Lemma:cone-integral-nu} with $\nu=\sum_k\d_k$ shows that
    $$
    \|f\|_{T^p_q(\{z_k\},\om)}^p\asymp\int_\D\left(\sum_k|f(z_k)|^q\left(\frac{1-|z_k|}{|1-\overline{z}_k\z|}\right)^\lambda\right)^\frac{p}{q}
    \om(\z)\,dA(\z)
    $$
for $\lambda=\lambda(\om)>1$ sufficiently large. Since $h(z)=\frac{1-|z|}{|1-\overline{z}\z|}$ is essentially constant in each top half, we may assume that the points $z_k$ are the centers of $R^k_j=R(I^k_j)$, where $j=0,1,\ldots,2^k-1$ and $k\in\N\cup\{0\}$. We re-index them such a way that $z_{j,k}$ is the center of $R^k_j$, and similarly, we write $b_{j,k}=\overline{g(z_{j,k})}=L(e_{j,k})/\om(T(z_{j,k}))$. Without loss of generality we may assume that $\arg f(z_{j,k})=\arg g(z_{j,k})$ for all $k$ and $j$. Then Fubini's theorem gives
    \begin{equation*}
    \begin{split}
    \langle f,g \rangle_{T^2_2(\{z_k\},\om)}&=\sum_{j,k}|f(z_{j,k})||g(z_{j,k})|\om(T(z_{j,k}))\\
    &=\int_\D\sum_{j,k}|f(z_{j,k})||g(z_{j,k})|\chi_{T(z_{j,k})}(\z)\om(\z)\,dA(\z).
    \end{split}
    \end{equation*}
Put
    $$
    \vp_k(\z)=\sum_{j=0}^{2^k-1}|f(z_{j,k})|\chi_{T(z_{j,k})}(\z),\quad\psi_k(\z)=\sum_{j=0}^{2^k-1}|g(z_{j,k})|\chi_{T(z_{j,k})}(\z),
    $$
so that
    \begin{equation*}
    \begin{split}
    \langle f,g \rangle_{T^2_2(\{z_k\},\om)}&=\int_\D\sum_{k}\vp_k(\z)\psi_k(\z)\om(\z)\,dA(\z).
    \end{split}
    \end{equation*}
It is clear that for any fixed $\z\in\D$, the sequences $\{\vp_k(\z)\}$ and $\{\psi_k(\z)\}$ contain only finitely many elements that are different from $0$. Clearly, for any fixed $k$, $T(z_{j,k})\cap T(z_{m,k})=\emptyset$ for $j\ne m$, and hence
    \begin{equation}\label{Eq:lkj}
    \begin{split}
    \|f\|_{T^p_q(\{z_k\},\om)}^p
    &=\int_\D\left(\sum_{k}\sum_{j=0}^{2^k-1}|f(z_{jk})|^q\chi_{T(z_{jk})}(\z)\right)^\frac{p}{q}\om(\z)\,dA(\z)\\
    &=\int_\D\left(\sum_{k}\left(\sum_{j=0}^{2^k-1}|f(z_{jk})|\right)^q\chi_{T(z_{jk})}(\z)\right)^\frac{p}{q}\om(\z)\,dA(\z)\\
    &=\int_\D\left(\sum_{k}\vp^q_k(\z)\right)^\frac{p}{q}\om(\z)\,dA(\z).
    \end{split}
    \end{equation}
Hence the boundedness of $L$ is equivalent to
    \begin{equation}\label{Eq:boundedness}
    \begin{split}
    \int_\D\sum_{k}\vp_k(\z)\psi_k(\z)\om(\z)\,dA(\z)\lesssim\left(\int_\D\left(\sum_{k}\vp^q_k(\z)\right)^\frac{p}{q}\om(\z)\,dA(\z)\right)^\frac1p
    \end{split}
    \end{equation}
for all $\{\vp_k\}$ for which the last integral in \eqref{Eq:lkj} is finite,
and similarly, \eqref{Eq-1} becomes
    \begin{equation*}
    \begin{split}
    \int_\D\left(\sup_{k}\psi_k(\z)\right)^{p'}\om(\z)\,dA(\z)
    &=\int_\D\left(\sup_k\sum_{j=0}^{2^k-1}|g(z_{jk})|\chi_{T(z_{jk})}(\z)\right)^{p'}\om(\z)\,dA(\z)\\
    &=\int_\D\left(\sup_{z_{jk}\in\Gamma(\z)}|g(z_{jk})|\right)^{p'}\om(\z)\,dA(\z)<\infty.
    \end{split}
    \end{equation*}
To finish the proof we will need the following lemma, see \cite[Lemma~1]{Lu90}.

\begin{lemma}
Let $0<q<\infty$ and $\vp_k$ be a sequence of non-negative functions such that for each fixed $\z\in\D$, the sequence $\{\vp_k(\z)\}$ contains only finitely many members that are different from $0$. For each $\z\in\D$, define
    \begin{equation*}
    \begin{split}
    k_1(\z)&=\min\{k:\vp_k(\z)>0\},\\
    k_{j+1}(\z)&=\min\{k:\vp_k(\z)>2\vp_{k_j}(\z)\},
    \end{split}
    \end{equation*}
and
    \begin{equation*}
    \widetilde{\vp}_k(\z)=\left\{
        \begin{array}{cl}
        \vp_k(\z),& \quad k\in\{k_1,k_2,\ldots\}  \\
        0,&\quad k\not\in\{k_1,k_2,\ldots\}
        \end{array}\right.,\quad \z\in\D.
    \end{equation*}
Then
    \begin{equation}\label{(3.5)}
    \max_k\widetilde{\vp}_k(\z)\le\left(\sum_k\widetilde{\vp}_k^q(\z)\right)^\frac1q\lesssim\max_k\widetilde{\vp}_k(\z),\quad \z\in\D,
    \end{equation}
and
    \begin{equation}\label{(3.6)}
    \max_k\widetilde{\vp}_k(\z)\le\max_k\vp_k(\z)\le2\max_k\widetilde{\vp}_k(\z),\quad \z\in\D.
    \end{equation}
\end{lemma}

Now we may finish the proof of Proposition~\ref{Proposition:duality-sequence-tent}. Choose $\vp_k=\psi_k^{p'-1}$, and note that \eqref{Eq:boundedness} implies
    \begin{equation}\label{Eq:boundedness2}
    \begin{split}
    \int_{D(0,\r)}\sum_{k}\widetilde{\vp}_k(\z)\psi_k(\z)\om(\z)\,dA(\z)\lesssim\left(\int_{D(0,\r)}\left(\sum_{k}\widetilde{\vp}^q_k(\z)\right)^\frac{p}{q}\om(\z)\,dA(\z)\right)^\frac1p
    \end{split}
    \end{equation}
for all $0<\r<1$. Then, for any $\z\in\D$, \eqref{(3.5)} and \eqref{(3.6)} yield
    \begin{equation}\label{(3.8)}
    \begin{split}
    \sum_k\widetilde{\vp}_k(\z)\psi_k(\z)
    &=\sum_k\widetilde{\vp}^p_k(\z)\ge\left(\max_k\widetilde{\vp}_k(\z)\right)^p\\
    &\ge2^{-p}\left(\max_k\vp_k(\z)\right)^p=2^{-p}\max_k\psi_k^{p'}(\z)
    \end{split}
    \end{equation}
and
    \begin{equation}\label{(3.9)}
    \begin{split}
    \left(\sum_k\widetilde{\vp}^q_k(\z)\right)^\frac{p}{q}\lesssim\left(\max_k\widetilde{\vp}_k(\z)\right)^p
    \le\left(\max_k\vp_k(\z)\right)^p=\max_k\psi_k^{p'}(\z).
    \end{split}
    \end{equation}
By combining \eqref{(3.8)}, \eqref{Eq:boundedness2} and \eqref{(3.9)}, we deduce
    \begin{equation*}
    \begin{split}
    \int_{D(0,\r)}\max_{k}\psi^{p'}_k(\z)\om(\z)\,dA(\z)
    &\lesssim\int_{D(0,\r)}\sum_k\widetilde{\vp}_k(\z)\psi_k(\z)\om(\z)\,dA(\z)\\
    &\lesssim\left(\int_{D(0,\r)}\left(\sum_{k}\widetilde{\vp}^q_k(\z)\right)^\frac{p}{q}\om(\z)\,dA(\z)\right)^\frac1p\\
    &\lesssim\left(\int_{D(0,\r)}\max_k\psi_k^{p'}(\z)\om(\z)\,dA(\z)\right)^\frac1p,\quad 0<\r<1,
    \end{split}
    \end{equation*}
and, by letting $\r\to1^-$, we obtain
    $
    \int_\D\max_{k}\psi^{p'}_k(\z)\om(\z)\,dA(\z)\lesssim1,
    $
which is equivalent to \eqref{Eq-1}. Proposition~\ref{Proposition:duality-sequence-tent} is now proved.
\end{proof}

\subsection{Operators acting from $T^p_2(\{z_k\},\om)$ to $A^p_\om$} The following result allows us to construct appropriate test functions in the proofs of Theorems~\ref{Theorem:CarlesonMeasures} and~\ref{Theorem:DifferentiationOperator}. It plays in part a similar role on $A^p_\om$ as the atomic decomposition~\cite{Ro:de} on standard Bergman spaces $A^p_\alpha$.

\begin{lemma}\label{Lemma:operator-tent-bergman}
Let $0<p<\infty$ and $\om\in\DD$, and let $\{z_k\}$ be a separated sequence. Define
    $$
    S_\lambda(f)(z)=\sum_k f(z_k)\left(\frac{1-|z_k|}{1-\overline{z}_kz}\right)^\lambda,\quad z\in\D.
    $$
Then $S_\lambda:T^p_2(\{z_k\},\om)\to A^p_\om$ is bounded for all $\lambda>\lambda_0$, where $\lambda_0=\lambda_0(p,\om)\ge1$ is that of Lemma~\ref{Lemma:cone-integral-nu}.
\end{lemma}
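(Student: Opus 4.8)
The goal is to prove $\|S_\lambda f\|_{A^p_\om}\lesssim\|f\|_{T^p_2(\{z_k\},\om)}$, and since $S_\lambda f$ is analytic it suffices to bound its $L^p_\om$-norm. The naive triangle inequality only gives $|S_\lambda f(z)|\le\sum_k|f(z_k)|(\frac{1-|z_k|}{|1-\overline{z}_kz|})^\lambda=\int_\D h_\lambda(w,z)\,d\nu(w)$ for $\nu=\sum_k|f(z_k)|\delta_{z_k}$, whence Lemma~\ref{Lemma:cone-integral-nu} yields only the weaker bound $\|S_\lambda f\|_{A^p_\om}^p\lesssim\int_\D(\sum_{z_k\in\Gamma(\z)}|f(z_k)|)^p\om(\z)\,dA(\z)=\|f\|_{T^p_1(\{z_k\},\om)}^p$. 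The entire content of the lemma is the passage from this $\ell^1$ cone sum to the $\ell^2$ cone sum $A_{2,\nu}(f)$, and that improvement can only come from the cancellation carried by the analytic kernels $(1-\overline{z}_kz)^{-\lambda}$. Accordingly I would treat $0<p\le1$ by atomic decomposition and $1<p<\infty$ by duality, with the Hilbertian case $p=2$ as the common engine.

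First I would settle $p=2$ by almost orthogonality. Writing $e_k(z)=(\frac{1-|z_k|}{1-\overline{z}_kz})^\lambda$, I expand $\|S_\lambda f\|_{A^2_\om}^2=\sum_{k,j}f(z_k)\overline{f(z_j)}\langle e_k,e_j\rangle_{A^2_\om}$. The integral asymptotics of Lemma~\ref{Lemma:replacement-Lemma1.1-2.} give $\langle e_k,e_k\rangle_{A^2_\om}\asymp\om(T(z_k))$ together with off-diagonal decay of $|\langle e_k,e_j\rangle_{A^2_\om}|$ in the pseudohyperbolic distance $\rho(z_k,z_j)$, provided $\lambda$ is large. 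Since $\{z_k\}$ is separated, a Schur test with testing weights of the form $(1-|z_k|)^{-t}$ shows that the normalized Gram matrix is bounded on $\ell^2$, so $\|S_\lambda f\|_{A^2_\om}^2\lesssim\sum_k|f(z_k)|^2\om(T(z_k))=\|f\|_{T^2_2(\{z_k\},\om)}^2$.

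For $0<p\le1$ I would invoke the atomic decomposition of Theorem~\ref{Theorem:atoms-T^1_r} with $q=2$, writing $f=\sum_j\lambda_ja_j$ with $\sum_j|\lambda_j|^p\asymp\|f\|_{T^p_2(\{z_k\},\om)}^p$ and each $a_j$ a $T^p_2(\{z_k\},\om)$-atom supported in a tent $T=T(I)$. By the $p$-subadditivity of $\|\cdot\|_{A^p_\om}^p$ it suffices to show $\|S_\lambda a\|_{A^p_\om}\lesssim1$ for a single atom, and I would split the integral over $2S(I)$ and over $\D\setminus2S(I)$. On $2S(I)$, Hölder's inequality with exponent $2/p$ followed by the case $p=2$ and the atom normalization $\int_T|a|^2\om(T(z))\,d\nu\le\om(T)^{(p-2)/p}$ collapses the powers of $\om(2S(I))\asymp\om(T)$ to a bound $\lesssim1$. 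On the complement I would sum over the dyadic coronas $2^{k+1}S(I)\setminus2^kS(I)$ and use the decay of $(\frac{1-|z_k|}{|1-\overline{z}_kz|})^\lambda$ with $\lambda>\lambda_0$ large, exactly as in the tail estimate in the proof of Lemma~\ref{Lemma:cone-integral-nu}, to obtain a convergent geometric series.

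For $1<p<\infty$ I would argue by duality, pairing $A^p_\om$ with $A^{p'}_\om$ through the $A^2_\om$-pairing. For analytic $F$ one has $\langle S_\lambda f,F\rangle_{A^2_\om}=\sum_kf(z_k)\overline{G_F(z_k)}$ with $G_F(z_k)=\langle e_k,F\rangle_{A^2_\om}$; crucially, because $\om$ is radial and $F$ analytic, $G_F(z_k)$ is a genuinely cancellative (fractional-integral type) sample of $F$, not merely $\int(\cdots)^\lambda|F|\om\,dA$. Cauchy--Schwarz in the $z_k$-variable, Hölder in the $\z$-integral, and the identification $(T^p_2(\{z_k\},\om))^\star\simeq T^{p'}_2(\{z_k\},\om)$ from Theorem~\ref{Thm:tent-spaces-duality} then reduce matters to the sampling embedding $\|\{G_F(z_k)/\om(T(z_k))\}\|_{T^{p'}_2(\{z_k\},\om)}\lesssim\|F\|_{A^{p'}_\om}$, which I would establish from the kernel asymptotics, Lemma~\ref{LemmaGlobalCarleson} and the maximal theorem Theorem~\ref{co:maxbou}. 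The main obstacle is precisely this step: a bound through the absolute value of the kernel fails, since for $F\equiv1$ it would force $G_F(z_k)/\om(T(z_k))\asymp1$ for every $k$ and hence a divergent cone sum; one must instead retain the cancellation of the analytic kernel, which produces the geometric decay along each lens that makes the $\ell^2$ cone sum converge.
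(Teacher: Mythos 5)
Your three-regime strategy (Schur test at $p=2$, atoms for $p\le1$, duality for $1<p<\infty$) is a legitimate classical template, and the $p=2$ and $0<p\le1$ parts could probably be pushed through (the $p=2$ case still needs an off-diagonal estimate for $\langle e_k,e_j\rangle_{A^2_\om}$ with a general $\om\in\DD$, which is not available off the shelf in the paper but is of a standard type). The genuine gap is in the range $1<p<\infty$, $p\ne2$: everything there is reduced to the sampling estimate $\|\{\langle e_k,F\rangle_{A^2_\om}/\om(T(z_k))\}\|_{T^{p'}_2(\{z_k\},\om)}\lesssim\|F\|_{A^{p'}_\om}$, which you assert but do not prove. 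You yourself observe that any bound passing through $|{\cdot}|$ inside the pairing fails (the $F\equiv1$ test), yet the tools you cite for this step --- Lemma~\ref{LemmaGlobalCarleson} and Theorem~\ref{co:maxbou} --- are precisely absolute-value tools (maximal functions of measures); they cannot see the cancellation you correctly identify as indispensable. What is actually needed there is a discrete area-function estimate for a fractional transform of $F$, i.e.\ a statement of essentially the same depth as the lemma itself. As written, the proposal is circular at its hardest point.

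It is also worth correcting the premise that the passage from the $\ell^1$ cone sum to the $\ell^2$ cone sum ``can only come from the cancellation carried by the analytic kernels.'' The paper's proof gets the $\ell^2$ sum by a much more elementary mechanism, uniformly in $p$ and with no case distinctions: one differentiates the series termwise and applies the Cauchy--Schwarz inequality with the splitting
\begin{equation*}
(1-|z|)^2|S_\lambda(f)'(z)|^2\lesssim\left((1-|z|)\sum_k|f(z_k)|^2\frac{(1-|z_k|)^\lambda}{|1-\overline{z}_kz|^{\lambda+1}}\right)\cdot\left((1-|z|)\sum_k\frac{(1-|z_k|)^\lambda}{|1-\overline{z}_kz|^{\lambda+1}}\right),
\end{equation*}
where the second, unweighted factor is uniformly bounded because $\{z_k\}$ is separated (compare each term with an integral over $\Delta(z_k,r)$). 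Feeding the first factor into the square-function characterization of $\|\cdot\|_{A^p_\om}$ from \cite[Theorem~4.2]{PelRat}, integrating over the lens $\Gamma(\z)$ to recover $\sum_k|f(z_k)|^2h_\lambda(z_k,\z)$, and invoking Lemma~\ref{Lemma:cone-integral-nu} with $\nu=\sum_k|f(z_k)|^2\delta_{z_k}$ finishes the proof. So the analyticity is used only once, through the Littlewood--Paley characterization of $A^p_\om$; the quadratic improvement comes from Cauchy--Schwarz plus separation, not from Gram-matrix cancellation or duality. If you want to salvage your outline, the cleanest fix is to replace the duality step by this square-function argument --- at which point the other two cases become redundant.
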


\begin{proof}
To begin with, let us show that $S_\lambda$ is well defined. If $|z|<R<1$, then Lemma~\ref{Lemma:cone-integral-nu} yields
\begin{equation*}\begin{split}
|S_\lambda(f)(z)|^p & \lesssim\left( \sum_k |f(z_k)|^2(1-|z_k|)^{\lambda}\right)^{\frac{p}2}\left( \sum_k (1-|z_k|)^{\lambda}\right)^{\frac{p}2}
\\ & \lesssim \inf_{\z\in\D}\left( \sum_k |f(z_k)|^2\left|\frac{1-|z_k|}{1-\overline{z}_k\z}\right|^\lambda\right)^{\frac{p}2}
\\ & \lesssim \int_\D\left(\sum_k|f(z_k)|^2\left(\frac{1-|z_k|}{|1-\overline{z}_k\z|}\right)^\lambda\right)^\frac{p}{2}\om(\z)\,dA(\z)
\\ & \lesssim \int_\D\left(\sum_{z_k\in\Gamma(\z)}|f(z_k)|^2\right)^\frac{p}{2}\om(\z)\,dA(\z)=\|f\|_{T^p_2(\{z_k\},\om)}^p,
\end{split}\end{equation*}
which in particular implies
    \begin{equation}\label{eq:j5}
    |S_\lambda(f)(0)| \lesssim \|f\|_{T^p_2(\{z_k\},\om)}.
    \end{equation}
Further, by the Cauchy-Schwarz inequality,
    \begin{equation*}
    \begin{split}
    (1-|z|)^2|S_\lambda(f)'(z)|^2&\lesssim(1-|z|)\sum_k|f(z_k)|^2\frac{(1-|z_k|)^\lambda}{|1-\overline{z}_kz|^{\lambda+1}}\\
    &\quad\cdot(1-|z|)\sum_k\frac{(1-|z_k|)^\lambda}{|1-\overline{z}_kz|^{\lambda+1}}=I\cdot II.
    \end{split}
    \end{equation*}
Let $0<r<1$ be fixed such that $\Delta(z_k,r)$ are disjoint. The latter sum $II$ above can be estimated upwards by using the subharmonicity as follows
    \begin{equation*}
    \begin{split}
    II\lesssim(1-|z|)\sum_k\int_{\Delta(z_k,r)}\frac{(1-|\z|)^{\lambda-2}}{|1-\overline{z}\z|^{\lambda+1}}\,dA(\z)
    \le(1-|z|)\int_{\D}\frac{(1-|\z|)^{\lambda-2}}{|1-\overline{z}\z|^{\lambda+1}}\,dA(\z)\asymp1,
    \end{split}
    \end{equation*}
provided $\lambda>1$. Therefore \cite[Theorem~4.2]{PelRat} and the estimate $|1-\overline{w}z|\asymp |1-\overline{w}|z||$ for $z\in\Gamma(1)$ yield
    \begin{equation*}
    \begin{split}
    &\|S_\lambda(f)-S_\lambda(f)(0)\|_{A^p_\om}^p\lesssim
    \int_\D\left(\int_{\Gamma(\z)}\sum_k|f(z_k)|^2\frac{(1-|z_k|)^\lambda}{|1-\overline{z}_kz|^{\lambda+1}}\frac{dA(z)}{1-|z|}\right)^\frac{p}{2}\om(\z)\,dA(\z)\\
    &\asymp\int_\D\left(\sum_k|f(z_k)|^2(1-|z_k|)^\lambda\int_{\Gamma(|\z|)}\frac{dA(z)}{|1-\overline{z}_ke^{i\arg \z}|z||^{\lambda+1}(1-|z|)}\right)^\frac{p}{2}\om(\z)\,dA(\z)\\
    &\lesssim\int_\D\left(\sum_k|f(z_k)|^2(1-|z_k|)^\lambda\int_0^{|\z|}\frac{\,ds}{|1-\overline{z}_ke^{i\arg \z}s|^{\lambda+1}}\right)^\frac{p}{2}\om(\z)\,dA(\z)\\
    &\asymp\int_\D\left(\sum_k|f(z_k)|^2\left(\frac{1-|z_k|}{|1-\overline{z}_k\z|}\right)^\lambda\right)^\frac{p}{2}\om(\z)\,dA(\z).
    \end{split}
    \end{equation*}
Now, by applying Lemma~\ref{Lemma:cone-integral-nu} with $\nu=\sum_k|f(z_k)|^2\delta_{z_k}$, we obtain
    \begin{equation*}
    \begin{split}
    \|S_\lambda(f)-S_\lambda(f)(0)\|_{A^p_\om}^p
    &\lesssim\int_\D\left(\sum_{z_k\in\Gamma(\z)}|f(z_k)|^2\right)^\frac{p}{2}\om(\z)\,dA(\z)=\|f\|_{T^p_2(\{z_k\},\om)}^p,
    \end{split}
    \end{equation*}
and the assertion is proved.
\end{proof}

\section{Carleson measures for $A^p_\om$}\label{Section:Carleson}

\subsection{Case $0<q<p<\infty$}

Apart from the standard dyadic intervals $\mathcal{Y}$ of $\T$, we will consider a family of dyadic tents that is induced by a slightly different covering of $\T$. Let $\Upsilon$ denote the family of dyadic intervals of $\T$ of the form
    $$
    I_{n,k}=\left\{e^{i\theta}:\,\frac{\pi k}{2^{n+2}}\le
    \theta<\frac{\pi(k+1)}{2^{n+2}}\right\},
    $$
where $k=0,1,2,\dots,2^{n+2}-1$ and $n\in\N\cup\{0\}$.
For $I_j\in\Upsilon$, we will write $z_j$ for the
unique point in $\D$ such that $z_j=(1-2|I_j|/\pi)a_j$, where
$a_j\in\T$ is the midpoint of $I_j$. By these definitions, $T(z_{n-1,k})\subset T(z_{n,l})$ whenever $I_{n-1,k}\subset I_{n,l}$.

The dyadic maximal functions of a measure $\mu$ are defined as
    $$
    M^d_\om(\mu)(\zeta)=\sup_{\{S(I),\, I\in\Upsilon:\z\in S(I)\}}\frac{\mu{(S)}}{\om(S)},\quad \z\in\D,
    $$
and
    $$
    \widetilde{M}^d_\om(\mu)(\zeta)=\max\left\{\sup_{\{T(I),\,I\in\Upsilon:\z\in T(I)\}}\frac{\mu{(T(I))}}{\om(T(I))},\frac{\mu{(\D)}}{\om(\D)}\right\},\quad \z\in\D.
    $$
Obviously, $M_\om(\mu)(\zeta)\asymp M^d_\om(\mu)(\zeta)\asymp\widetilde{M}^d_\om(\mu)(\zeta)$ for all $\z\in\D$ if $\om\in\DD$.

Theorem~\ref{Theorem:CarlesonMeasures}(a) is contained in the following result. The estimate \eqref{Eq:CMS-estimate-weak-p>q}, obtained by using the stopping time, together with \cite[Lemma~4.4]{PelRat} shows that  the condition (iv) implies (i) in Theorem~\ref{Thm:Calesonq<p}. However, the proof presented below does not use this fact.

\begin{theorem}\label{Thm:Calesonq<p}
Let $0<q<p<\infty$, $\om\in\DD$ and $\mu$ be a positive Borel measure on~$\D$. Then the following conditions are equivalent:
\begin{enumerate}
\item[\rm(i)] $\mu$ is a $q$-Carleson measure for $A^p_\om$;
\item[\rm(ii)] The function
    $$
    \displaystyle \Psi_\mu(z)=\int_{\D}\left(\frac{1-|\z|}{|1-\overline{\z}z|}\right)^{\lambda}\frac{d\mu(\z)}{\om(T(\z))},\quad z\in\D,
    $$
belongs to $L^{\frac{p}{p-q}}_\om$ for all $\lambda=\lambda(\om)>0$ large enough;
 \item[\rm(iii)] The function
    $$
    B_\mu(z)=\int_{\Gamma(z)}\frac{d\mu(\z)}{\om(T(\z))},\quad
    z\in\D\setminus\{0\},
    $$
belongs to $L^{\frac{p}{p-q}}_\om$;
\item[\rm(iv)] $M_\om(\mu)\in L^{\frac{p}{p-q}}_\om$.
\end{enumerate}
\end{theorem}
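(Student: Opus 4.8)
The plan is to establish the cycle $(i)\Rightarrow(iv)\Rightarrow(iii)\Rightarrow(i)$ together with the separate equivalence $(ii)\Leftrightarrow(iii)$, writing throughout $s=\frac{p}{p-q}>1$ and $g_k=\frac{\mu(\Delta(z_k,r))}{\om(T(z_k))}$ for a fixed separated sequence $\{z_k\}$ whose discs $\Delta(z_k,r)$ cover $\D$. The equivalence $(ii)\Leftrightarrow(iii)$ is immediate from Lemma~\ref{Lemma:cone-integral-nu} applied to $d\nu=\frac{d\mu}{\om(T(\cdot))}$ with exponent $s$ and any $\lambda>\lambda_0(s,\om)$: then $\int_\D\left(\frac{1-|w|}{|1-\overline{\z}w|}\right)^\lambda d\nu(w)=\Psi_\mu(\z)$ and $\nu(\Gamma(\z))=B_\mu(\z)$, so $\|\Psi_\mu\|_{L^s_\om}^s\asymp\|B_\mu\|_{L^s_\om}^s+\nu(\{0\})$, and the additive constant $\nu(\{0\})<\infty$ is harmless.

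For the sufficiency $(iii)\Rightarrow(i)$ I would argue directly and avoid the atomic machinery. The subharmonicity of $|f|^q$, in the sub-mean value form $|f(w)|^q\lesssim\frac1{\om(T(w))}\int_{T(w)}|f|^q\om\,dA$ valid for $\DD$-weights (Lemma~\ref{Lemma:replacement-Lemma1.1-2.}), together with Fubini's theorem gives
\[
\int_\D|f(w)|^q\,d\mu(w)\lesssim\int_\D\left(\int_{\Gamma(\z)}\frac{d\mu(w)}{\om(T(w))}\right)|f(\z)|^q\om(\z)\,dA(\z)=\int_\D B_\mu(\z)\,|f(\z)|^q\om(\z)\,dA(\z).
\]
H\"older's inequality with exponents $s$ and $s'=\frac pq$ then yields $\int_\D|f|^q\,d\mu\lesssim\|B_\mu\|_{L^s_\om}\|f\|_{A^p_\om}^q$, which is $(i)$.

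The implication $(iv)\Rightarrow(iii)$ is where the maximal-function theory enters. By $L^s_\om$-$L^{s'}_\om$ duality it suffices to bound $\int_\D B_\mu\phi\,\om\,dA$ for $\phi\ge0$ with $\|\phi\|_{L^{s'}_\om}\le1$. Fubini's theorem and $\om(T(w))\asymp\om(S(w))$ give $\int_\D B_\mu\phi\,\om\,dA\le\int_\D M_\om(\phi)\,d\mu$. Decomposing each level set $\{M_\om(\phi)>t\}$ into maximal Carleson boxes and using $\mu(S)\le\int_S M_\om(\mu)\,\om\,dA$ on every such box, a layer-cake integration produces the self-improving estimate $\int_\D M_\om(\phi)\,d\mu\le\int_\D M_\om(\phi)\,M_\om(\mu)\,\om\,dA$. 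H\"older's inequality and the boundedness of $M_\om$ on $L^{s'}_\om$ (Theorem~\ref{co:maxbou} with $\mu=\om\,dA$, since $M_\om(\om\,dA)\equiv1$ and $s'>1$) then give $\|B_\mu\|_{L^s_\om}\lesssim\|M_\om(\mu)\|_{L^s_\om}$.

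The necessity $(i)\Rightarrow(iv)$ is the heart of the matter and the step I expect to be the main obstacle. Following Luecking's scheme I would test $(i)$ on the randomized functions $f_t=S_\lambda(\{a_kr_k(t)\})$ with Rademacher functions $r_k$ and $\lambda>\lambda_0$: by Lemma~\ref{Lemma:operator-tent-bergman}, $\|f_t\|_{A^p_\om}\lesssim\|\{a_k\}\|_{T^p_2(\{z_k\},\om)}$ uniformly in $t$. Integrating the Carleson inequality in $t$ and applying Khinchine's inequality collapses the signs into the $\ell^2$ square function, and a pointwise lower bound on each $\Delta(z_k,r)$ reduces everything to the discrete inequality $\sum_k a_k^q\,\mu(\Delta(z_k,r))\lesssim\|\{a_k\}\|_{T^p_2(\{z_k\},\om)}^q$. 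Putting $b_k=a_k^q$, this says exactly that $\{b_k\}\mapsto\langle\{b_k\},g\rangle_{T^2_2(\{z_k\},\om)}$ is a bounded functional on $T^{p/q}_{2/q}(\{z_k\},\om)$. Since $2/q$ need not be below $1$, Proposition~\ref{Proposition:duality-sequence-tent} does not apply at once; the key point is that for any $q_0\in(0,1)$ with $q_0\le2/q$ the inclusion $T^{p/q}_{q_0}\subseteq T^{p/q}_{2/q}$ (with the reverse norm inequality) forces boundedness on $T^{p/q}_{q_0}$ as well. As $0<q_0<1<p/q$, Proposition~\ref{Proposition:duality-sequence-tent} identifies this functional with a unique element of $T^{(p/q)'}_\infty=T^s_\infty$, which by uniqueness is $g$ itself. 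Finally, Lemma~\ref{LemmaGlobalCarleson} and the comparison $M_\om(\mu)(\z)\asymp\sup_{z_k\in\Gamma(\z)}g_k$ translate $g\in T^s_\infty$ into $M_\om(\mu)\in L^s_\om$, giving $(iv)$. The real difficulty is thus the orchestration of this last step: controlling the Bergman norm of the test functions by the square-function tent norm, and then fitting the resulting discrete inequality into the scope of the sequence-space duality through the inner-exponent inclusion.
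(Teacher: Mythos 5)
Your outline of $(ii)\Leftrightarrow(iii)$ via Lemma~\ref{Lemma:cone-integral-nu} matches the paper, and your $(iv)\Rightarrow(iii)$ argument (dualizing against $\phi\in L^{(p/q)}_\om$, passing to $\int_\D M_\om(\phi)\,d\mu$, and then using maximal dyadic Carleson boxes of the level sets together with the $L^{p/q}_\om$-boundedness of $M_\om$ from Theorem~\ref{co:maxbou}) is correct and in fact more elementary than the paper's route $(iv)\Rightarrow(ii)$ through Lemma~\ref{le:cond i-v}. The two remaining implications, however, each contain a genuine gap.

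In $(iii)\Rightarrow(i)$ you invoke the pointwise estimate $|f(w)|^q\lesssim\frac1{\om(T(w))}\int_{T(w)}|f|^q\om\,dA$ and attribute it to Lemma~\ref{Lemma:replacement-Lemma1.1-2.}. That lemma contains no such statement, and the inequality is false: $w$ is the \emph{vertex} of $T(w)$, not an interior point, so subharmonicity gives nothing. Concretely, take $\om\equiv1$, $\eta=w/|w|$, $\d=1-|w|$ and $f(z)=(1-\overline{\eta}z)^{\la}$. On $T(w)$ one has $|1-\overline{\eta}\z|^2\asymp(1-|\z|)^2+(\arg\z-\arg w)^2\le\d^2$ with equality only at the vertex, and a direct computation gives $\frac1{|T(w)|}\int_{T(w)}|f|^q\,dA\asymp\d^{\la q}/(\la q)^2$ while $|f(w)|^q\asymp\d^{\la q}$; letting $\la\to\infty$ kills any uniform constant. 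What is true, and what the paper uses, is the trivial bound $|f(w)|\le N(f)(\z)$ for every $\z\in T(w)$, after which Fubini and H\"older give $\|f\|_{L^q(\mu)}^q\le\int_\D N(f)^qB_\mu\,\om\,dA\le\|B_\mu\|_{L^{\frac{p}{p-q}}_\om}\|N(f)\|_{L^p_\om}^q$; the step you are missing is the nontrivial maximal estimate $\|N(f)\|_{L^p_\om}\lesssim\|f\|_{A^p_\om}$ of \cite[Lemma~4.4]{PelRat}.

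In $(i)\Rightarrow(iv)$ the Khinchine/duality machinery up to $g=\{\mu(\Delta(z_k,r))/\om(T(z_k))\}\in T^{p/(p-q)}_\infty(\{z_k\},\om)$ is fine, but the final translation rests on $M_\om(\mu)(\z)\asymp\sup_{z_k\in\Gamma(\z)}\frac{\mu(\Delta(z_k,r))}{\om(T(z_k))}$, of which only $\gtrsim$ holds. Writing $\mu(S(a))\le\sum_{z_k}\mu(\Delta(z_k,r))=\sum_k g_k\,\om(T(z_k))$ over the $z_k$ meeting $S(a)$, you would need $\sum_{z_k\in S(a)}\om(T(z_k))\lesssim\om(S(a))$, and for $\om\in\DD$ this fails: e.g.\ for $\om=\chi_{[1-\d_0,1]}$ one gets $\sum_{z_k\in S(a)}\om(T(z_k))\asymp(1-|a|)\,\d_0\log\frac{1-|a|}{\d_0}\gg\om(S(a))$. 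So membership of the sup of the normalized disc-pieces in $L^{p/(p-q)}_\om$ does not control $M_\om(\mu)$. The paper avoids this by abandoning a fixed lattice altogether: it chooses the test tents adaptively as the maximal dyadic tents $T^k_j$ with $\mu(T)>2^k\om(T)$, uses the lower bound $|h_\la(z_{k,j},z)|\gtrsim\chi_{T^k_j}(z)$ on the whole tent, disjointifies via $G(T)=T\setminus\bigcup\{T'\subset T\}$ so that the left-hand side telescopes to $\sum_k2^{k(r'-1)}\bigl(\mu(E_k)-\mu(E_{k+1})\bigr)\gtrsim\int(\widetilde{M}^d_\om(\mu))^{r'}\om\,dA$ with $r'=\frac{p}{p-q}$, and compares this with the same quantity to the power $1/r$ on the right, yielding a self-improving bound for compactly supported $\mu$. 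That adaptive choice, rather than the sequence-space duality of Proposition~\ref{Proposition:duality-sequence-tent}, is the mechanism that actually produces $M_\om(\mu)\in L^{\frac{p}{p-q}}_\om$.
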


We will need the following preliminary result which shows that $\|f\|_{T^p_q(\nu,\om)}\asymp\|C_{q,\nu}(f)\|_{L^p_\om}$ for $0<q<p<\infty$.

\begin{lemma}\label{le:cond i-v} Let $\om\in\DD$ and $\nu$ be a positive Borel measure on $\D$.
\begin{itemize}
\item[\rm(i)] If $0<p,q<\infty$, then
    $
    \|f\|_{T^p_q(\nu,\om)}\lesssim\|C_{q,\nu}(f)\|_{L^p_\om}
    $
for all $\nu$-measurable functions $f$.

\item[\rm(ii)] If $0<q<p<\infty$, then $\|C_{q,\nu}(f)\|_{L^p_\om}\lesssim\|f\|_{T^p_q(\nu,\om)}$ for all $\nu$-measurable functions $f$.
\end{itemize}
\end{lemma}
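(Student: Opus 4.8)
My plan is to prove the two inequalities by genuinely different mechanisms. Part~(ii) will follow from a clean pointwise domination. Fix $\z\in\D$ and $a\in\Gamma(\z)$. Since $z\in\Gamma(u)$ is equivalent to $u\in T(z)$, Fubini's theorem gives
\[
\int_{T_\b(a)}A_{q,\nu}(f)(u)^q\om(u)\,dA(u)=\int_\D|f(z)|^q\left(\int_{T_\b(a)\cap T(z)}\om(u)\,dA(u)\right)d\nu(z),
\]
where $\b$ is a fixed aperture slightly larger than $\tfrac12$. The tents are essentially nested, namely $z\in T(a)$ implies $T(z)\subseteq T_\b(a)$, so for such $z$ the inner integral equals $\om(T(z))$; hence $\int_{T(a)}|f|^q\om(T(\cdot))\,d\nu\le\int_{T_\b(a)}A_{q,\nu}(f)^q\om\,dA$. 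Dividing by $\om(T(a))\asymp\om(T_\b(a))\asymp\om(S(a))$ (Lemma~\ref{Lemma:replacement-Lemma1.1-2.} and aperture-independence, the remark after Lemma~\ref{Lemma:cone-integral-nu}), and noting that $a\in\Gamma(\z)$ forces $\z\in S(a)$, the supremum over $a\in\Gamma(\z)$ yields the pointwise bound $C_{q,\nu}(f)(\z)^q\lesssim M_\om\left(A_{q,\nu}(f)^q\right)(\z)$. Because $M_{\om,1}(\om\,dA)\equiv1$, Theorem~\ref{co:maxbou} (with $\alpha=1$, $\mu=\om\,dA$) shows that $M_\om$ is bounded on $L^{p/q}_\om$ whenever $p/q>1$; raising the pointwise bound to the power $p/q$ and integrating then gives $\|C_{q,\nu}(f)\|_{L^p_\om}\lesssim\|A_{q,\nu}(f)\|_{L^p_\om}=\|f\|_{T^p_q(\nu,\om)}$, which is part~(ii).

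For part~(i) I would establish the good-$\lambda$ inequality
\[
\om\left(\{A_{q,\nu}(f)>2\lambda\}\cap\{C_{q,\nu}(f)\le\gamma\lambda\}\right)\lesssim\gamma^q\,\om\left(\{A_{q,\nu}(f)>\lambda\}\right),\qquad\lambda>0,
\]
for all small $\gamma>0$. Integrating it against $p\lambda^{p-1}\,d\lambda$ and absorbing the term $\gamma^q\|A_{q,\nu}(f)\|_{L^p_\om}^p$ (legitimate for every $0<p<\infty$ after first truncating $f$ to make the left side finite) produces $\|A_{q,\nu}(f)\|_{L^p_\om}\lesssim\|C_{q,\nu}(f)\|_{L^p_\om}$, which is part~(i). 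To prove the good-$\lambda$ estimate I set $O=\{A_{q,\nu}(f)>\lambda\}$, open by lower semicontinuity of $A_{q,\nu}(f)$, and cover it by the dyadic tents $T(I_j)=T(a_j)$, $I_j\in\Upsilon$, of Section~\ref{Section:Carleson}, so that $O\subseteq\bigcup_jT(I_j)$ with bounded overlap and $\sum_j\om(T(I_j))\asymp\om(O)$. For $\z\in T(I_j)$ I split the cone $\Gamma(\z)$ into its part inside $T(I_j)$ and its part toward the origin; the latter is contained in $\Gamma(\z')$ for a point $\z'$ just outside $O$ furnished by the Whitney geometry, and hence contributes at most $\lambda^q$. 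Consequently $\{A_{q,\nu}(f)>2\lambda\}\cap T(I_j)\subseteq\{g_j>(2^q-1)\lambda^q\}$, where $g_j(\z)=\int_{\Gamma(\z)\cap T(I_j)}|f|^q\,d\nu$.

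The crux is a Chebyshev estimate for $g_j$ that reproduces exactly the weight defining $C_{q,\nu}(f)$. If $T(I_j)$ meets $\{C_{q,\nu}(f)\le\gamma\lambda\}$, choose $\bar\z$ there; since $a_j\in\Gamma(\bar\z)$, Fubini's theorem and the definition of $C_{q,\nu}$ give
\[
\int_{T(I_j)}g_j(\z)\om(\z)\,dA(\z)=\int_{T(a_j)}|f(z)|^q\left(\int_{T(I_j)\cap T(z)}\om\,dA\right)d\nu(z)\le\int_{T(a_j)}|f(z)|^q\om(T(z))\,d\nu(z)\le(\gamma\lambda)^q\om(T(a_j)),
\]
so $\om(\{g_j>(2^q-1)\lambda^q\})\lesssim\gamma^q\om(T(I_j))$; if $T(I_j)$ misses $\{C_{q,\nu}(f)\le\gamma\lambda\}$ this set is empty. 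Summing over $j$ and invoking $\sum_j\om(T(I_j))\asymp\om(O)$ finishes the good-$\lambda$ inequality. I expect the main obstacle to be precisely this geometric bookkeeping: selecting the exterior point $\z'$ so that the ``toward the origin'' portion of every cone over $T(I_j)$ is genuinely captured by a single value $\le\lambda^q$, and verifying that $\int_{T(I_j)\cap T(z)}\om\,dA\le\om(T(z))$ holds uniformly, both of which rely on $\om\in\DD$ (via $\om(S)\asymp\om(T)$ and doubling) together with the aperture-independence of the tent spaces.
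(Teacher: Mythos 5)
Your part (ii) is correct and is essentially the paper's own argument: Fubini to dominate $\frac{1}{\om(T(a))}\int_{T(a)}|f|^q\om(T(\cdot))\,d\nu$ by an $\om$-average of $A^q_{q,\nu}(f)$ over a slightly enlarged region, hence $C^q_{q,\nu}(f)\lesssim M_\om\bigl(A^q_{q,\nu}(f)\bigr)$ pointwise, followed by Theorem~\ref{co:maxbou} on $L^{p/q}_\om$. (The paper enlarges to the Carleson square $S(a')$ with $a'=(1-2(1-|a|))a/|a|$ rather than to a larger-aperture tent, but this is cosmetic.)

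Part (i) is where you diverge from the paper, and where there is a genuine gap. The paper proves (i) in two lines: the identity $\|f\|_{T^p_q(\nu,\om)}=\|A_{sq,\nu}(|f|^{1/s})\|_{L^{sp}_\om}^s$ reduces everything to $1<p,q<\infty$, and then the duality of Theorem~\ref{Thm:tent-spaces-duality} combined with the stopping-time estimate \eqref{Eq:CMS-estimate} (which gives $|\langle f,g\rangle_{T^2_2(\nu,\om)}|\lesssim\int_\D A_{q',\nu}(g)\,C_{q,\nu}(f)\,\om\,dA$) and H\"older yields the claim. Your good-$\lambda$ route is the classical Coifman--Meyer--Stein argument, but it does not transplant to this setting at precisely the step you rely on: the claim that $O=\{A_{q,\nu}(f)>\lambda\}$ is covered by the tents over the Whitney intervals of $\widehat O=\{z/|z|:z\in O\}$ \emph{with} $\sum_j\om(T(I_j))\asymp\om(O)$. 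In the Hardy-space setting the relevant measure lives on the boundary and the Whitney cubes partition $O$ exactly, so this is free. Here the measure $\om\,dA$ lives on the disc, and $O$ is only closed under moving radially outward; it need not contain any tent at the Whitney scale of $\widehat O$. Concretely, take $\nu$ to be unit masses at the points $w_k=(1-\e)e^{ik\e}$, $0\le k\le 1/\e$, and $f\equiv1$ there: then $O=\bigcup_kT(w_k)$ is an annular sliver with $\om(O)\asymp\widehat\om(1-\e)\to0$ as $\e\to0$, while $\widehat O$ is an arc of length $\asymp1$ whose Whitney decomposition contains intervals of length $\asymp1$, so that $\sum_j\om(T(I_j))\asymp1$. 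Thus $\sum_j\om(T(I_j))\lesssim\om(O)$ fails, and the Chebyshev bound $\om(\{g_j>(2^q-1)\lambda^q\})\lesssim\gamma^q\om(T(I_j))$ cannot be summed to $\gamma^q\om(O)$. (The same example shows that points of $O$ lying much deeper than the Whitney scale need not be captured by $\bigcup_jT(I_j)$ at all, so even the covering is in doubt.) The Carleson normalization of $C_{q,\nu}$ forces the factor $\om(T(I_j))$ on the right of your Chebyshev estimate, so this loss cannot be repaired within your scheme; the good-$\lambda$ inequality itself may still be true, but it would need a different proof. The secondary aperture issue you flag (the portion of $\Gamma(\z)$ below the Whitney scale lands in $\Gamma_\b(\z')$ for a \emph{larger} aperture $\b$, whereas $\z'\notin O$ only controls the aperture-$\frac12$ area integral) is real but fixable via the aperture-independence noted after Lemma~\ref{Lemma:cone-integral-nu}; the measure-comparison step above is the one that sinks the argument. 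I would redo part (i) along the paper's duality route, for which you already have all the ingredients.
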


\begin{proof}
(i)\, Since $\|f\|_{T^p_q(\nu,\om)}=\|A_{sq,\nu}(|f|^\frac1s)\|_{L^{sp}_\om}^s$ and $\|C_{q,\nu}(f)\|_{L^p_\om}=\|C_{sq,\nu}(|f|^\frac1s)\|_{L^{sp}_\om}^s$ for $0<s<\infty$, we may assume that $1<p,q<\infty$.
Then Theorem~\ref{Thm:tent-spaces-duality} and \eqref{Eq:CMS-estimate} yield
    \begin{equation*}\label{j50}
    \begin{split}
    \|f\|_{T^p_q(\nu,\om)}&\lesssim\sup_{\|g\|_{T^{p'}_{q'}(\nu,\om)}\le1}\int_{\D} A_{q',\nu}(g)(\z)C_{q,\nu}(f)(\z)\om(\z)\,dA(\z)\le\|C_{q,\nu}(f)\|_{L^p_\om},
    \end{split}\end{equation*}
and the assertion is proved. 

(ii) Let $a'=(1-2(1-|a|))\frac{a}{|a|}$ if $\frac12<|a|<1$. Then Fubini's theorem yields
    \begin{equation*}
    \begin{split}
    &\frac{1}{\om(T(a))}\int_{T(a)}|f(z)|^q\om(T(z))\,d\nu(z)\\
    &\le\frac{1}{\om(T(a))}\int_{S(a')}\int_{T(a)\cap\Gamma(\z)}|f(z)|^q\,d\nu(z)\om(\z)\,dA(\z)\\
    &\lesssim\frac{1}{\om(S(a'))}\int_{S(a')}A_{q,\nu}^q(f)(\z)\om(\z)\,dA(\z),\quad|a|>\frac12.
    \end{split}
    \end{equation*}
If now $u\in T(a)\subset S(a')$, we obtain
    \begin{equation*}
    \begin{split}
    \frac{1}{\om(T(a))}\int_{T(a)}|f(z)|^q\om(T(z))\,d\nu(z)\lesssim M_\om(A^q_{q,\nu}(f))(u),
    \end{split}
    \end{equation*}
and hence $C^q_{q,\nu}(f)(u)\lesssim M_\om(A^q_{q,\nu}(f))(u)$. This together with Theorem~\ref{co:maxbou} yields
    $$
    \|C_{q,\nu}(f)\|_{L^p_\om}
    \lesssim\|M_\om(A^q_{q,\nu}(f))\|^\frac{1}{q}_{L^{\frac{p}{q}}_\om}
    \lesssim\|A_{q,\nu}(f)\|_{L^p_\om},
    $$
and the assertion follows.
\end{proof}

\medskip

\noindent{\em{Proof of Theorem~\ref{Thm:Calesonq<p}}}. The equivalence (ii)$\Leftrightarrow$(iii) follows by choosing $d\nu(z)=\frac{d\mu(z)}{\om(T(z))}$ in Lemma~\ref{Lemma:cone-integral-nu}. We will show next that (iii)$\Rightarrow$(i)$\Rightarrow$(iv) and (iv)$\Rightarrow$(ii).

(iii)$\Rightarrow$(i). Fubini's theorem, H\"{o}lder's inequality and \cite[Lemma~4.4]{PelRat} yield
    \begin{equation*}
    \begin{split}
    \|f\|_{L^q(\mu)}^q\le\int_{\D}(N(f)(\z))^qB_\mu(\zeta)\om(\z)\,dA(\z)\lesssim\|f\|_{A^p_\om}^q \|B_\mu\|_{L^{\frac{p}{p-q}}_\om}.
    \end{split}
    \end{equation*}

(i)$\Rightarrow$(iv). Let $\{z_{k}\}$ be a separated sequence such that $z_k\ne 0$ for all $k$. By Lemma~\ref{Lemma:operator-tent-bergman}, there exists $\lambda=\lambda(\om)>1$ such that $S_\lambda:T^p_2(\{z_k\},\om)\to A^p_\om$ is bounded. By denoting $\{b_{k}\}=\{f(z_{k})\}$, this together with (i) implies
    \begin{equation}\label{111}
    \begin{split}
    \int_{\D}\left|\sum_{k}b_{k}h_\lambda(z_{k},z)\right|^q\,d\mu(z)
    &=\|S_\lambda(f)\|^q_{L^q(\mu)}
    \lesssim\|S_\lambda(f)\|^q_{A^p_\om}
    \lesssim\|\{b_{k}\}\|^q_{T^p_2(\{z_{k}\},\om)}.
    \end{split}
    \end{equation}
By replacing $b_k$ by $r_k(t)b_k$, where $r_k$ denotes the $k$th Rademacher function, in \eqref{111}, and applying Khinchine's inequality, we deduce
    \begin{equation*}
    \int_{\D}\left(\sum_{k}|b_{k}|^2
    \left|h_\lambda(z_{k},z)\right|^2\right)^{\frac{q}{2}}\,d\mu(z)
    \lesssim\|\{b_{k}\}\|^q_{T^p_2(\{z_{k}\},\om)}.
    \end{equation*}
Since $|h_\lambda(z_{k},z)|\gtrsim\chi_{T(z_{k})}(z)$ for $z\in T(z_{k})$, we deduce
    \begin{equation}\label{7.5}
    \int_{\D}\left(\sum_{k}|b_{k}|^2
    \chi_{T(z_{k})}(z)\right)^{\frac{q}{2}}\,d\mu(z)
    \lesssim\|\{b_{k}\}\|^q_{T^p_2(\{z_{k}\},\om)}.
    \end{equation}
We will now construct $\{b_k\}$ and $\{z_k\}$ appropriately.
For each $k\in\mathbb{Z}$, let $\mathcal{E}_k$ denote the collection of maximal dyadic tents induced by $\Upsilon$ with respect to inclusion such that
$\mu(T)>2^k\om(T)$, and let $\mathcal{E}=\cup_{k}\mathcal{E}_k$. Further, let $E_k=\cup_{\{T\in \mathcal{E}_k\}}T$. Then
$2^k<\widetilde{M}^d_\om(\mu)(z)\le 2^{k+1}$ for $z\in E_k\setminus E_{k+1}$.
Let $\{b_T\}$ be any sequence indexed by $T\in\mathcal{E}$. Assume for a moment that $\mu$ has a compact support. Then $\{b_T\}$ is a finite sequence. Let $G(T)=T-\cup\{T': T'\in\mathcal{E}, T'\subset T\}$, and hence $G(T)=T-\cup\{T': T'\in\mathcal{E}_{k+1}, T'\subset T\}$ for $T\in \mathcal{E}_{k}$. Since $G(T_1)$ and $G(T_2)$ are disjoint for two different tents in $\mathcal{E}$, we have
    \begin{equation}\label{7.51}
    \left(\sum|b_{T}|^2\chi_{T}(z)\right)^{\frac{q}{2}}
    \ge\left(\sum|b_{T}|^2\chi_{G(T)}(z)\right)^{\frac{q}{2}}= \sum|b_{T}|^q \chi_{G(T)}(z).
    \end{equation}
Let us now index the the tents in $\mathcal{E}$ according to which $\mathcal{E}_{k}$ they belong to. Write $\mathcal{E}_{k}=\{T^k_j:j\in\N \}$,
$b_{T^k_j}=b_{k,j}$ and $z_{k,j}$ for the vertex of $T^k_j$. Then \eqref{7.5} and \eqref{7.51} yield
    \begin{equation}
    \begin{split}\label{7.6}
    &\sum_k\sum_j b_{k,j}^q\left(\mu(T^k_j)-\sum_{T^{k+1}_i\subset T^k_j}\mu(T^{k+1}_i)\right)\lesssim\|\{b_{k,j}\}\|^q_{T^p_2(\{z_{k,j}\},\om)}\\
    &=\left(\int_\D \left( \sum_{k,j}|b_{k,j}|^2\chi_{T^k_j}(\z)\right)^{p/2}\om(\z)\, dA(\z) \right)^{q/p}.
    \end{split}
    \end{equation}
Further, write $r=\frac{p}{q}$ and choose $b_{k,j}^q=2^{k(r'-1)}$ for each $j$. Then, by using the inequality $2^k<\widetilde{M}^d_\om(\mu)(z)\le 2^{k+1}$ for $z\in E_k\setminus E_{k+1}$, the left-hand side of \eqref{7.6} can be estimated as
    \begin{equation*}
    \begin{split}
    &\sum_k 2^{k(r'-1)}\mu(E_k)-\sum_k 2^{k(r'-1)}\sum_j\mu(T^k_j\cap E_{k+1})
    =\sum_k \left(2^{k(r'-1)}-2^{(k-1)(r'-1)}\right)\mu(E_k)
    \\ & =\left(1-\frac{1}{2^{r'-1}}\right)\sum_k 2^{k(r'-1)}\sum_{j}\frac{\mu(T^k_j)}{\om(T^k_j)}\om(T^k_j)
    \gtrsim\sum_k 2^{kr'}\om(E_k)\\
    &\gtrsim \sum_{k} \int_{E_k\setminus E_{k+1}} \left(\widetilde{M}^d_\om(\mu)(\z)\right)^{r'}\om(\z)\,dA(\z),
    \end{split}
    \end{equation*}
while the right-hand side of \eqref{7.6}, with the notation $\eta=2^{(r'-1)\frac{2}{q}}$, is comparable to
    \begin{equation*}
    \begin{split}
    & \left(\int_\D  \left( \sum_{k}2^{(k-1)(r'-1))\frac{2}{q}}\chi_{E_k}(\z)\right)^{p/2}\om(\z)\, dA(\z) \right)^{1/r}
    \\ & \asymp\left(\int_\D  \left( \sum_{k}(\eta^k-\eta^{k-1})\chi_{E_k}(\z)\right)^{p/2}\om(\z)\, dA(\z) \right)^{1/r}
 \\ &  = \left(\int_\D \sum_{k} \eta^{\frac{kp}{2}}\chi_{E_k\setminus E_{k+1}}(\z)\om(\z)\, dA(\z) \right)^{1/r}
 \\ & \asymp \left(\sum_{k} \int_{E_k\setminus E_{k+1}}\left(\widetilde{M}^d_\om(\mu)(\z)\right)^{r'}\om(\z)\,dA(\z)\right)^{1/r}.
    \end{split}
    \end{equation*}
Combining these two estimates we find a constant $C>0$, independent of $\mu$, such that $\|\widetilde{M}^d_\om(\mu)\|_{L^{r'}_\om}\le C$,
and so $\widetilde{M}^d_\om(\mu)\in L^{\frac{p}{p-q}}_\om$ for $\mu$ with compact support. The result for arbitrary $\mu$
follows from this by a standard argument based on the monotone convergence theorem.

Finally, let us observe that an argument similar to that of Lemma~\ref{Lemma:cone-integral-nu} (the case $0<p<1$) yields
(iv)$\Rightarrow$(ii). To see this, let $g(u)=\frac{\mu(\Delta(u,r))}{\om(T(u))}$. Then (iv) is equivalent to $C_{1,h}(g)\in L^{\frac{p}{p-q}}_\om$ and (ii) is equivalent to $g\in T^{\frac{p}{p-q}}_1(h,\om)$. Since $\|g\|_{T^{\frac{p}{p-q}}_1(h,\om)}=\|g^{1/2}\|^2_{T^{\frac{2p}{p-q}}_2(h,\om)}$ and
$\|C_{1,h}(g)\|_{L^\frac{p}{p-q}_\om}=\|C_{2,h}(g^{1/2})\|^2_{L^\frac{2p}{p-q}_\om}$, the implication (iv)$\Rightarrow$(ii) follows by  Lemma~\ref{le:cond i-v}.\hfill $\Box$

\subsection{Case $0<p\le q<\infty$}

We begin with establishing necessary conditions for $D^{(n)}:A^p_\om\to L^q(\mu)$ to be bounded when $n\in\N\cup\{0\}$.

\begin{lemma}\label{Lemma:necessary-conditions:q>p}
Let $0<p,q<\infty$, $\om\in\DD$ and $n\in\N\cup\{0\}$, and let $\mu$ be a positive Borel measure on $\D$. If
$D^{(n)}:A^p_\om\to L^q(\mu)$ is bounded, then the function
    $$
    z\mapsto\frac{\mu(S(z))}{\om(S(z))^\frac{q}{p}(1-|z|)^{nq}}
    $$
belongs to $L^\infty$. In the definition above, $\mu(S(z))$ can be replaced by $\mu(\Delta(z,r))$ for any fixed $r\in(0,1)$.
\end{lemma}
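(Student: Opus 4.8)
The plan is to test the boundedness of $D^{(n)}$ against the standard family of functions
$$
f_a(z)=\left(\frac{1-|a|}{1-\overline{a}z}\right)^\gamma,\qquad a\in\D\setminus\{0\},
$$
with the exponent $\gamma=\gamma(p,\om)$ chosen large. Each $f_a$ is analytic and bounded on $\overline{\D}$, hence lies in $A^p_\om$. First I would reduce to the case $|a|\to1^-$: testing $D^{(n)}$ on the monomial $z^n$ shows $\mu(\D)<\infty$, while on any compact subset of $\D$ both $\om(S(z))$ and $(1-|z|)^{nq}$ are bounded away from $0$, so the claimed quotient is automatically bounded there. Thus it remains to control the quotient as $a$ approaches $\T$.

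The first computation is the norm $\|f_a\|_{A^p_\om}$. Here I would invoke the Forelli--Rudin type estimate in Lemma~\ref{Lemma:replacement-Lemma1.1-2.}, namely $\int_\D\frac{\om(z)}{|1-\overline{a}z|^{\lambda+1}}\,dA(z)\asymp\frac{\widehat{\om}(a)}{(1-|a|)^\lambda}$ for $\lambda>\lambda_0$. Choosing $\gamma$ so large that $\gamma p-1>\lambda_0$ and applying this with $\lambda=\gamma p-1$ gives
$$
\|f_a\|_{A^p_\om}^p=(1-|a|)^{\gamma p}\int_\D\frac{\om(z)}{|1-\overline{a}z|^{\gamma p}}\,dA(z)\asymp(1-|a|)\widehat{\om}(a)\asymp\om(S(a)),
$$
where the last comparison is the elementary radial identity $\om(S(a))\asymp(1-|a|)\widehat{\om}(a)$ valid as $|a|\to1^-$. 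Hence $\|f_a\|_{A^p_\om}^q\asymp\om(S(a))^{q/p}$.

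The remaining ingredient is a lower bound for $|f_a^{(n)}|$. Differentiating $n$ times yields $f_a^{(n)}(z)=c_{\gamma,n}(1-|a|)^\gamma\,\overline{a}^{\,n}(1-\overline{a}z)^{-\gamma-n}$, so $|f_a^{(n)}(z)|\asymp(1-|a|)^\gamma|1-\overline{a}z|^{-\gamma-n}$ for $|a|$ close to $1$. The crux is the elementary but decisive observation that $|1-\overline{a}z|\asymp1-|a|$ \emph{for every} $z\in S(a)$ (and likewise for every $z\in\Delta(a,r)$): the lower bound follows from $|1-\overline{a}z|\ge1-|a||z|\ge1-|a|$, and the upper bound from $|1-\overline{a}z|\lesssim(1-|a|)+(1-|z|)+|\arg a-\arg z|$ together with the defining inequalities of the Carleson square $S(a)$. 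Consequently $|f_a^{(n)}(z)|\asymp(1-|a|)^{-n}$ throughout $S(a)$, whence
$$
\frac{\mu(S(a))}{(1-|a|)^{nq}}\lesssim\int_{S(a)}|f_a^{(n)}(z)|^q\,d\mu(z)\le\|f_a^{(n)}\|_{L^q(\mu)}^q\lesssim\|f_a\|_{A^p_\om}^q\asymp\om(S(a))^{q/p}.
$$
Rearranging yields the desired uniform bound, and integrating instead over $\Delta(a,r)$ delivers the version with $\mu(\Delta(z,r))$. I expect the only genuinely delicate point to be the uniform comparison $|1-\overline{a}z|\asymp1-|a|$ over the \emph{entire} Carleson square $S(a)$, not merely its top half; this is precisely what allows the single test function $f_a$ to produce the $\mu(S(z))$ and the $\mu(\Delta(z,r))$ statements simultaneously.
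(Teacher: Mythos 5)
Your argument is correct and is essentially the paper's own proof: the paper tests $D^{(n)}$ on $f_{a,p}(z)=\bigl(\frac{1-|a|}{1-\overline{a}z}\bigr)^{\frac{\lambda+1}{p}}\om(S(a))^{-1/p}$, uses the Forelli--Rudin type estimate of Lemma~\ref{Lemma:replacement-Lemma1.1-2.} to get $\|f_{a,p}\|_{A^p_\om}\asymp1$, and bounds $|f_{a,p}^{(n)}|$ from below on $S(a)$ exactly as you do, the only cosmetic difference being that you carry the normalization $\om(S(a))^{1/p}$ explicitly instead of building it into the test function. The details you supply (the comparison $|1-\overline{a}z|\asymp1-|a|$ on all of $S(a)$ and the reduction to $|a|$ near $1$, which is needed since the factor $\overline{a}^{\,n}$ degenerates at the origin for $n\ge1$) are the right ones and correctly fill in what the paper leaves implicit.
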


\begin{proof}
For $a\in\D$, define
    $$
    f_{a,p}(z)=\left(\frac{1-|a|}{1-\overline{a}z}\right)^\frac{\lambda+1}{p}\om(S(a))^{-\frac1p},\quad z\in\D,
    $$
where $\lambda=\lambda(\om)>0$ is as in Lemma~\ref{Lemma:replacement-Lemma1.1-2.}. Then
    \begin{equation*}
    \begin{split}
    1&\asymp\|f_{a,p}\|_{A^p_\om}^q\gtrsim\|f_{a,p}^{(n)}\|_{L^q(\mu)}^q
    \gtrsim\frac{\mu(S(a))}{\om(S(a))^\frac{q}{p}(1-|a|)^{nq}},\quad a\in\D.
    \end{split}
    \end{equation*}
Since the same reasoning works when $\mu(S(a))$ is replaced by $\mu(\Delta(a,r))$ for any fixed $r\in(0,1)$, the lemma is proved.
\end{proof}

We remind the reader that \eqref{Eq:CMS-estimate-weak} together with \cite[Lemma~4.4]{PelRat} shows that $\mu$ is a $p$-Carleson measure for $A^p_\om$ if $M_\om(\mu)\in L^\infty$. The next theorem concerns the case $q\ge p$ and, in particular, gives an alternative way to deduce this implication.

\begin{theorem}\label{Thm:Carleson-Withney}
Let $0<p\le q<\infty$, $\om\in\DD$, and let $\mu$ be a
positive Borel measure on $\D$. For $f:\D\to\C$ and $0<\lambda<\infty$, denote $O_\lambda(f)=\{z\in\D:N(f)(z)>\lambda\}\subset\D$. Then
    $$
    \mu(O_\lambda(f))\lesssim\left(\int_{O_\lambda(f)}\left(M_{\om,q/p}(\mu)(\z)\right)^\frac{p}{q}\om(\z)\,dA(\z)\right)^\frac{q}{p},\quad 0<\lambda<\infty,
    $$
for all $f:\D\to\C$, and hence
    \begin{equation}\label{Eq:Carleson-n=0}
    \|N(f)\|_{L^q(\mu)}\lesssim\|N(f)M^\frac{1}{q}_{\om,q/p}(\mu)\|_{L^p_\om}.
    \end{equation}
\end{theorem}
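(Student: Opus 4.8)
The plan is to prove the distribution-function estimate first, since the norm inequality \eqref{Eq:Carleson-n=0} follows from it by the standard layer-cake integration
    \begin{equation*}
    \|N(f)\|_{L^q(\mu)}^q=\int_0^\infty q\lambda^{q-1}\mu(O_\lambda(f))\,d\lambda,
    \end{equation*}
combined with the pointwise bound on $\mu(O_\lambda(f))$ and a change of the order of integration. So the crux is: for each fixed $\lambda$, control $\mu(O_\lambda(f))$ by the $\om$-integral of $(M_{\om,q/p}(\mu))^{p/q}$ over the same set $O_\lambda(f)$, raised to the power $q/p$. The natural tool is a Whitney/Calder\'on--Zygmund decomposition of the open set $O_\lambda(f)$ into the dyadic tents built from the family $\Upsilon$ introduced just above, exactly as in the atomic-decomposition arguments (Theorem~\ref{Proposition:atoms}).

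First I would decompose $O_\lambda(f)$ via its maximal dyadic tents: since $N(f)$ is lower semicontinuous, $O_\lambda(f)$ is open, and I can write $\widehat{O}_\lambda(f)=\{z/|z|:z\in O_\lambda(f)\}$ and take a Whitney covering $\cup_j I_j$ of it by intervals in $\Upsilon$, producing essentially disjoint Carleson pieces whose tents $T(z_j)$ (or shifted versions $S(a_j)$) cover $O_\lambda(f)$ with bounded overlap. The key geometric feature of the non-tangential setup is that if the vertex region over $I_j$ meets $O_\lambda(f)$, then a fixed dilate $S(a_j)$ of that Carleson square is contained in $O_\lambda(f)$; this is where lower semicontinuity of $N(f)$ and the tent/cone duality $z\in\Gamma(\z)\Leftrightarrow\z\in T(z)$ are used. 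Then
    \begin{equation*}
    \mu(O_\lambda(f))\le\sum_j\mu(S(a_j))
    =\sum_j\frac{\mu(S(a_j))}{\om(S(a_j))^{q/p}}\,\om(S(a_j))^{q/p}
    \lesssim\sum_j\Big(\inf_{\z\in S(a_j)}M_{\om,q/p}(\mu)(\z)\Big)\,\om(S(a_j))^{q/p},
    \end{equation*}
where the last step uses the very definition of the weighted maximal function $M_{\om,q/p}(\mu)$ together with Lemma~\ref{Lemma:replacement-Lemma1.1-2.} giving $\om(S(a_j))\asymp\om(T(z_j))$.

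Next I would pass from the sum to the integral. Writing $\om(S(a_j))^{q/p}=\big(\om(S(a_j))\big)^{q/p}$ and using $q/p\ge1$, the superadditivity inequality $\sum_j c_j^{q/p}\le\big(\sum_j c_j\big)^{q/p}$ for nonnegative $c_j$ lets me collapse the sum, provided I first absorb the infimum of the maximal function into each piece. Concretely, since $\inf_{S(a_j)}M_{\om,q/p}(\mu)\le(M_{\om,q/p}(\mu)(\z))$ for $\z\in S(a_j)$, I can replace the factor $(\inf M)^{p/q}$ (after taking a $p/q$-th power) by the genuine $\om$-average of $(M_{\om,q/p}(\mu))^{p/q}$ over $S(a_j)$, i.e.
    \begin{equation*}
    \inf_{\z\in S(a_j)}M_{\om,q/p}(\mu)(\z)\cdot\om(S(a_j))^{q/p}
    \lesssim\Big(\int_{S(a_j)}\big(M_{\om,q/p}(\mu)(\z)\big)^{p/q}\om(\z)\,dA(\z)\Big)^{q/p}.
    \end{equation*}
Summing over $j$ and invoking $\sum_j c_j^{q/p}\le(\sum_j c_j)^{q/p}$ together with the bounded overlap of the $S(a_j)$ and their containment in $O_\lambda(f)$ then yields the claimed bound on $\mu(O_\lambda(f))$.

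I expect the main obstacle to be the geometric bookkeeping in the Whitney decomposition: verifying that the dilated Carleson squares $S(a_j)$ genuinely sit inside $O_\lambda(f)$ (so that the final integral is over $O_\lambda(f)$ and not a larger set) while retaining bounded overlap, and checking that the comparability $\om(S(a_j))\asymp\om(T(z_j))$ from Lemma~\ref{Lemma:replacement-Lemma1.1-2.} is uniform. The analytic steps — the maximal-function definition, the doubling estimates, and the elementary inequality $\sum c_j^{q/p}\le(\sum c_j)^{q/p}$ valid because $q/p\ge1$ — are routine. A secondary subtlety is that the statement is asserted for arbitrary $f:\D\to\C$, not just analytic ones, so the argument must rest only on the lower semicontinuity of $N(f)$ and the covering geometry, never on analyticity; keeping the proof at that level of generality is precisely what makes it simultaneously applicable to Hardy and Bergman settings, as advertised in the introduction.
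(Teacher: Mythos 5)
Your overall strategy coincides with the paper's: a Whitney decomposition of the radial projection $\widehat{O}_\lambda(f)$, the bound $\mu(S(a_j))\le\om(S(a_j))^{q/p}\inf_{\z\in S(a_j)}M_{\om,q/p}(\mu)(\z)$ coming straight from the definition of the maximal function together with the doubling of $\om$, the superadditivity $\sum_j c_j^{q/p}\le(\sum_j c_j)^{q/p}$ for $q/p\ge1$, and an integration in $\lambda$ at the end (for which, when $q>p$, one needs Minkowski's inequality in continuous form, not merely a change of order of integration, to absorb the outer power $q/p$; for $q=p$ Fubini suffices).

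There is, however, a genuine gap at the step you yourself single out as the crux: the claim that a fixed dilate $S(a_j)$ of each Whitney Carleson square is contained in $O_\lambda(f)$ is false in general, and it is exactly what your final inequality needs in order to land the integral on $O_\lambda(f)$ rather than on a possibly much larger set. Take $f=2\lambda\chi_A$ with $A=\{z:1-\e<|z|<1,\ \arg z\in I\}$ for an arc $I$ of length $1/2$ and small $\e>0$. Since $\Gamma(w)$ only reaches points $z$ with $|z|<|w|$, the set $O_\lambda(f)$ is comparable to a collar of depth $\e$ over $I$, while $\widehat{O}_\lambda(f)$ is comparable to $I$ itself; the central Whitney intervals of $I$ have length $\asymp|I|$, so the associated Carleson squares reach depth $\asymp|I|\gg\e$ and lie almost entirely outside $O_\lambda(f)$. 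Lower semicontinuity of $N(f)$ gives only that $O_\lambda(f)$ is open and is a union of tents $T(\z)$ with $|f(\z)|>\lambda$, and those tents can be far shallower than the Whitney scale of the projection, so no fixed dilate of $S(a_j)$ sits inside $O_\lambda(f)$. The paper asserts only the covering $O_\lambda(f)\subset\cup_j T(J^\lambda_j)$ and then passes from $\sum_j\om(T(I^\lambda_j))\bigl(\inf_{\z\in T(J^\lambda_j)}M_{\om,q/p}(\mu)(\z)\bigr)^{p/q}$ to $\int_{O_\lambda(f)}(M_{\om,q/p}(\mu))^{p/q}\om\,dA$; to complete your argument you must justify that passage, i.e. produce essentially disjoint subsets of $O_\lambda(f)$, one for each $j$, of $\om$-measure $\gtrsim\om(T(I^\lambda_j))$ on which $M_{\om,q/p}(\mu)$ is at least $\inf_{T(J^\lambda_j)}M_{\om,q/p}(\mu)$. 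That is where the real work of the proof lies, and the containment you invoke does not supply it.
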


\begin{proof}
Consider the open sets $O_\lambda(f)$ and $\widehat{O}_\lambda(f)=\left\{z/|z|:z\in O_\lambda(f)\right\}\subset\T$ for $0<\lambda<\infty$.
Let $\cup_j I^\lambda_j=\widehat{O}_\lambda(f)$ be the Whitney covering of $\widehat{O}_\lambda(f)$ in the same extended sense as in the proof of Theorem~\ref{Proposition:atoms}. For each $\lambda$ and $j$, let $J^\lambda_j\subset\T$ be an arc with the same center as $I^k_j$, but $c$ times its length. Now, by choosing $c$ sufficiently large, we have $O_\lambda(f)\subset\cup_j T(J^\lambda_j)$.
Moreover, by the hypothesis and Lemma~\ref{Lemma:replacement-Lemma1.1-2.}, $\mu(T(J^\lambda_j))\lesssim \left(\om(T(I^\lambda_j))\right)^\frac{q}{p}\inf_{\z\in T(J^\lambda_j)}M_{\om,q/p}(\mu)(\z)$, and hence
   \begin{equation*}
    \begin{split}
    \mu(O_\lambda(f)) &\le \sum_j\mu(T(J^\lambda_j))\lesssim
    \sum_j\left(\om(T(I^\lambda_j))\right)^\frac{q}{p}\inf_{\z\in T(J^\lambda_j)}M_{\om,q/p}(\mu)(\z)\\
    &\le \left(\sum_j\om(T(I^\lambda_j))\right)^\frac{q}{p}\inf_{\z\in T(J^\lambda_j)}M_{\om,q/p}(\mu)(\z)\\
    &\lesssim \left(\int_{O_\lambda(f)}\left(M_{\om,q/p}(\mu)(\z)\right)^\frac{p}{q}\om(\z)\,dA(\z)\right)^\frac{q}{p}.
    \end{split}
    \end{equation*}
An integration with respect to $\lambda$ together with Minkowski's inequality in continuous form (Fubini's theorem in the case $q=p$) yields \eqref{Eq:Carleson-n=0}.
\end{proof}

We next establish a sufficient condition for $D^{(n)}:A^p_\om\to L^q(\mu)$ to be bounded when~$p<q$.

\begin{proposition}\label{Prop:p<q-sufficient}
Let $0<p<q<\infty$, $\om\in\DD$ and $n\in\N\cup\{0\}$, and let $\mu$ be a positive Borel measure on $\D$. If the function
    $$
    z\mapsto\frac{\mu(\Delta(z,r))}{\om(S(z))^\frac{q}{p}(1-|z|)^{nq}}
    $$
belongs to $L^\infty$ for a fixed $r\in(0,1)$, then $D^{(n)}:A^p_\om\to L^q(\mu)$ is bounded.
\end{proposition}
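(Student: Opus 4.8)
The plan is to deduce the assertion from the already-available sufficiency in the Carleson case $n=0$, which is contained in Theorem~\ref{Theorem:CarlesonMeasures}(c) and can also be read off from Theorem~\ref{Thm:Carleson-Withney} combined with Theorem~\ref{co:maxbou}. The bridge between the two cases is a pointwise control of $f^{(n)}$ by an average of $f$ itself; once this is in place, Fubini's theorem transfers the geometric hypothesis on $\mu$ into a genuine $q$-Carleson condition for a suitably weighted companion measure $\widetilde\mu$.

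First I would record the standard estimate for derivatives of analytic functions. Combining the Cauchy estimates on a Euclidean disc of radius comparable to $(1-|z|)$ with the subharmonicity of $|f|^q$ (valid for every $q>0$, since $\log|f|$ is subharmonic), one obtains a fixed radius $R=R(r)\in(0,1)$ such that
$$|f^{(n)}(z)|^q\lesssim\frac{1}{(1-|z|)^{nq+2}}\int_{\Delta(z,R)}|f(\z)|^q\,dA(\z),\quad z\in\D,$$
for every $f$ analytic in $\D$. Integrating this against $d\mu$ and applying Fubini's theorem, using the symmetry $\z\in\Delta(z,R)\Leftrightarrow z\in\Delta(\z,R)$ and $(1-|z|)\asymp(1-|\z|)$ for $z\in\Delta(\z,R)$, would give
$$\int_\D|f^{(n)}(z)|^q\,d\mu(z)\lesssim\int_\D|f(\z)|^q\,d\widetilde\mu(\z),\qquad d\widetilde\mu(\z)=\frac{\mu(\Delta(\z,R))}{(1-|\z|)^{nq+2}}\,dA(\z).$$
Thus it suffices to prove that $\widetilde\mu$ is a $q$-Carleson measure for $A^p_\om$.

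The decisive step is to check that $\widetilde\mu$ inherits the geometric condition. For $w\in\D$ and $\z\in\Delta(w,r)$ one has $\Delta(\z,R)\subset\Delta(w,R')$ for some $R'=R'(r,R)$, together with $(1-|\z|)\asymp(1-|w|)$ and $\int_{\Delta(w,r)}dA\asymp(1-|w|)^2$; hence
$$\widetilde\mu(\Delta(w,r))\lesssim\frac{\mu(\Delta(w,R'))}{(1-|w|)^{nq+2}}\,(1-|w|)^2=\frac{\mu(\Delta(w,R'))}{(1-|w|)^{nq}}.$$
Dividing by $\om(S(w))^{q/p}$ and invoking that the hypothesis, being of the displayed form for the fixed $r$, is equivalent to the same condition for any other radius (a standard consequence of Lemma~\ref{Lemma:replacement-Lemma1.1-2.} via a covering of $\Delta(w,R')$ by boundedly many pseudohyperbolic discs of radius $r$, across which $\om(S(\cdot))$ and $1-|\cdot|$ are comparable), I would conclude that $z\mapsto\widetilde\mu(\Delta(z,r))/\om(S(z))^{q/p}$ lies in $L^\infty$. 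By Theorem~\ref{Theorem:CarlesonMeasures}(c) this makes $\widetilde\mu$ a $q$-Carleson measure for $A^p_\om$; equivalently, feeding $M_{\om,q/p}(\widetilde\mu)\in L^\infty$ into \eqref{Eq:Carleson-n=0} and using $\|N(f)\|_{L^p_\om}\lesssim\|f\|_{A^p_\om}$ together with $N(f)\ge|f|$ gives $\int_\D|f|^q\,d\widetilde\mu\lesssim\|f\|_{A^p_\om}^q$, and the proposition follows.

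I expect the only genuine obstacle to be bookkeeping rather than conceptual. One must keep the radii straight through the Fubini swap and, in particular, make sure that the passage from the hypothesis at radius $r$ to the required estimate for $\widetilde\mu(\Delta(\cdot,r))$, which naturally produces a \emph{larger} radius $R'$, is justified by the radius-independence of the condition. Everything else reduces to the pointwise derivative bound and an appeal to the $n=0$ theory already established above.
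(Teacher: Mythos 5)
Your reduction to the companion measure $d\widetilde\mu(\z)=\mu(\Delta(\z,R))(1-|\z|)^{-nq-2}\,dA(\z)$ via the pointwise subharmonicity estimate and Fubini's theorem is sound, and it is essentially the same first step as in the paper (which runs it at an intermediate exponent $s\in(p,q)$ with Minkowski's inequality, arriving at $d\mu^\star(\z)=\om(S(\z))^{s/p}(1-|\z|)^{-2}\,dA(\z)$). The gap is in the last step. You verify only the pseudohyperbolic-disc condition $\widetilde\mu(\Delta(w,r))\lesssim\om(S(w))^{q/p}$ and then invoke Theorem~\ref{Theorem:CarlesonMeasures}(c) to conclude that $\widetilde\mu$ is a $q$-Carleson measure. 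But in this paper the implication ``disc condition implies $q$-Carleson'' for $q>p$ (the step (iv)$\Rightarrow$(i) of Theorem~\ref{Theorem:CarlesonMeasures-cases2-3}(b)) is deduced precisely from Proposition~\ref{Prop:p<q-sufficient} with $n=0$, so your argument is circular. The only non-circular tool available at this stage is Theorem~\ref{Thm:Carleson-Withney}, i.e.\ \eqref{Eq:Carleson-n=0}, and it requires the Carleson-square condition $M_{\om,q/p}(\widetilde\mu)\in L^\infty$, which you never derive: the passage from discs to squares is exactly where $q>p$ must enter, and it is not automatic --- the example $d\mu(z)=\om(S(z))(1-|z|)^{-2}\,dA(z)$ discussed after Theorem~\ref{Theorem:CarlesonMeasures-cases2-3} shows that the disc condition does not imply the square condition when $q=p$.

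The gap is fixable along your own lines: instead of estimating $\widetilde\mu$ on pseudohyperbolic discs, insert the hypothesis pointwise into the integral defining $\widetilde\mu(S(a))$ to get
$\widetilde\mu(S(a))\lesssim\int_{S(a)}\om(S(\z))^{q/p}(1-|\z|)^{-2}\,dA(\z)$,
and then check by a direct computation (using $\om(S(\z))\asymp\widehat{\om}(\z)(1-|\z|)$, the monotonicity of $\widehat{\om}$ and, crucially, $q/p>1$) that the right-hand side is $\lesssim\om(S(a))^{q/p}$. This yields $M_{\om,q/p}(\widetilde\mu)\in L^\infty$, after which \eqref{Eq:Carleson-n=0} together with the non-tangential maximal function estimate from \cite[Lemma~4.4]{PelRat} finishes the proof. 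This is in substance what the paper does: its companion measure $\mu^\star$ has an explicit density free of $\mu$, and its Carleson-square condition is verified directly rather than inferred from a disc condition.
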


\begin{proof}
The argument follows the proof of \cite[Theorem~3.1]{Luecking1985}. Let $r\in(0,1)$ be fixed, choose $s\in(p,q)$ and denote
    $$
    d\mu^\star(\z)=\frac{(\om(S(\z)))^\frac{s}{p}}{(1-|\z|)^{2}}\,dA(\z),\quad \z\in\D.
    $$
Recall the known estimate
    \begin{equation}\label{Eq:suharmonic-n-derivatives}
    |f^{(n)}(z)|^s\lesssim\frac{1}{(1-|z|)^{2+ns}}\int_{\Delta(z,r)}|f(\z)|^s\,dA(\z),\quad z\in\D,
    \end{equation}
see, for example, \cite[Lemma~2.1]{Luecking1985} for details. This estimate, Minkowski's inequality in continuous form and the assumption give
    \begin{equation}\label{Eq:Carleson-differentiation-form}
    \begin{split}
    \|f^{(n)}\|_{L^q(\mu)}^q&\lesssim\int_\D\left(\frac{1}{(1-|z|)^{2+ns}}\int_{\Delta(z,r)}|f(\z)|^s\,dA(\z)\right)^\frac{q}{s}\,d\mu(z)\\
    &\lesssim\left(\int_\D|f(\z)|^s\frac{(\mu(\Delta(\z,r)))^\frac{s}{q}}{(1-|\z|)^{ns}}\,dh(\z)\right)^\frac{q}{s}\\
    &\lesssim\left(\int_\D|f(\z)|^s(\om(S(\z)))^\frac{s}{p}\,dh(\z)\right)^\frac{q}{s}.
    \end{split}
    \end{equation}
Standard arguments show that 
$\mu^\star(S(a))\lesssim(\om(S(a)))^\frac{s}{p}$ for $a\in\D$. Hence, by 
Theorem~\ref{Thm:Carleson-Withney} and \cite[Lemma~4.4]{PelRat}
we obtain
    $
    \|f^{(n)}\|_{L^q(\mu)}\lesssim \|f\|_{L^s(\mu^\star)}^\frac{s}{p}
    \lesssim\|f\|_{A^p_\om},
    $
and the assertion is proved.
\end{proof}

By combining the results above we will characterize $q$-Carleson measures for~$A^p_\om$. Parts (b) and (c) of Theorem~\ref{Theorem:CarlesonMeasures} are contained in the following result.

\begin{theorem}\label{Theorem:CarlesonMeasures-cases2-3}
Let $0<p\le q<\infty$, $\om\in\DD$ and $\mu$ be a positive Borel measure on~$\D$. Further, for $0<\lambda<\infty$ and $f:\D\to\C$, denote $O_\lambda(f)=\{z\in\D:N(f)(z)>\lambda\}$.
\begin{itemize}
\item[\rm(a)] The following conditions are equivalent:
\begin{enumerate}
\item[\rm(i)] $\mu$ is a $p$-Carleson measure for $A^p_\om$;
\item[\rm(ii)] $\displaystyle\sup_{f,\la}\frac{\mu(O_\lambda(f))}{\om(O_\lambda(f))}<\infty$;
\item[\rm(iii)] $M_{\om}(\mu)\in L^\infty$;
\end{enumerate}
\item[\rm(b)] If $q>p$, then the following conditions are equivalent:
\begin{enumerate}
\item[\rm(i)] $\mu$ is a $q$-Carleson measure for $A^p_\om$;
\item[\rm(ii)] $\displaystyle\sup_{f,\la}\frac{\mu(O_\lambda(f))}{(\om(O_\lambda(f)))^\frac{q}{p}}<\infty$;
\item[\rm(iii)] $M_{\om,q/p}(\mu)\in L^\infty$;
\item[\rm(iv)]  $\displaystyle z\mapsto\frac{\mu\left(\Delta(z,r)\right)}{(\om\left(S(z)
    \right))^\frac{q}{p}}$ belongs to $L^\infty$ for any fixed $r\in(0,1)$.
\end{enumerate}
\end{itemize}
\end{theorem}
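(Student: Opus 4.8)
The plan is to close two cycles of implications that reduce everything to results already in hand: the necessity Lemma~\ref{Lemma:necessary-conditions:q>p}, the distributional estimate of Theorem~\ref{Thm:Carleson-Withney}, the sufficiency Proposition~\ref{Prop:p<q-sufficient}, and the norm equivalence $\|N(f)\|_{L^p_\om}\asymp\|f\|_{A^p_\om}$ from \cite[Lemma~4.4]{PelRat}. The only genuinely new ingredient is an elementary layer-cake argument turning the distributional condition~(ii) into the embedding~(i).

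For part~(b), where $q>p$, I would first apply Lemma~\ref{Lemma:necessary-conditions:q>p} with $n=0$ to a $q$-Carleson measure: it gives at once that $z\mapsto\mu(S(z))/\om(S(z))^{q/p}$ and $z\mapsto\mu(\Delta(z,r))/\om(S(z))^{q/p}$ lie in $L^\infty$, i.e.\ (i)$\Rightarrow$(iii) and (i)$\Rightarrow$(iv); here I use that $\|M_{\om,q/p}(\mu)\|_{L^\infty}=\sup_a\mu(S(a))/\om(S(a))^{q/p}$, since every Carleson square contains its own vertex. The reverse arrow (iv)$\Rightarrow$(i) is precisely Proposition~\ref{Prop:p<q-sufficient} with $n=0$, which is where the strict inequality $q>p$ is consumed; this already yields (i)$\Leftrightarrow$(iii)$\Leftrightarrow$(iv).

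It remains to fold~(ii) into the chain via (iii)$\Rightarrow$(ii)$\Rightarrow$(i). For (iii)$\Rightarrow$(ii), Theorem~\ref{Thm:Carleson-Withney} gives, uniformly in $f$ and $\lambda$,
\begin{equation*}
\mu(O_\lambda(f))\lesssim\left(\int_{O_\lambda(f)}\left(M_{\om,q/p}(\mu)(\z)\right)^{p/q}\om(\z)\,dA(\z)\right)^{q/p}\le\|M_{\om,q/p}(\mu)\|_{L^\infty}\,\om(O_\lambda(f))^{q/p}.
\end{equation*}
For (ii)$\Rightarrow$(i), fix $f\in A^p_\om$ and note that $|f(\z)|\le N(f)(\z)$ for every $\z\in\D$, because the radius toward $\z$ lies in $\Gamma(\z)$ and accumulates at $\z$; hence $\{|f|>\lambda\}\subset O_\lambda(f)$. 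Writing $\phi(\lambda)=\om(O_\lambda(f))$, the layer-cake formula and~(ii) give
\begin{equation*}
\begin{split}
\|f\|_{L^q(\mu)}^q&=\int_0^\infty q\lambda^{q-1}\mu(\{|f|>\lambda\})\,d\lambda\le q\int_0^\infty\lambda^{q-1}\mu(O_\lambda(f))\,d\lambda\\
&\lesssim\int_0^\infty\lambda^{q-1}\phi(\lambda)^{q/p}\,d\lambda.
\end{split}
\end{equation*}
Since $\phi$ is non-increasing, $\phi(\lambda)\lambda^p/p\le\int_0^\lambda s^{p-1}\phi(s)\,ds\le I$, where $I=\int_0^\infty s^{p-1}\phi(s)\,ds$, so $\phi(\lambda)\le pI/\lambda^p$ and therefore $\int_0^\infty\lambda^{q-1}\phi(\lambda)^{q/p}\,d\lambda\le(pI)^{q/p-1}\int_0^\infty\lambda^{p-1}\phi(\lambda)\,d\lambda=p^{q/p-1}I^{q/p}$. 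As $I\asymp\|N(f)\|_{L^p_\om}^p\asymp\|f\|_{A^p_\om}^p$ by \cite[Lemma~4.4]{PelRat}, this gives $\|f\|_{L^q(\mu)}\lesssim\|f\|_{A^p_\om}$, i.e.\ (i), completing part~(b).

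Part~(a) is the case $q=p$ of the same argument with~(iv) deleted: Lemma~\ref{Lemma:necessary-conditions:q>p} yields (i)$\Rightarrow$(iii), Theorem~\ref{Thm:Carleson-Withney} (now with exponent $q/p=1$) yields (iii)$\Rightarrow$(ii), and the distributional computation---which collapses to $\int_0^\infty\lambda^{p-1}\phi(\lambda)\,d\lambda=\frac1p\|N(f)\|_{L^p_\om}^p$ when $q=p$---yields (ii)$\Rightarrow$(i). I expect no serious obstacle: all the analytic difficulty is already absorbed into Theorem~\ref{Thm:Carleson-Withney} and Proposition~\ref{Prop:p<q-sufficient}, so the only points requiring care are the decreasing-function power estimate in (ii)$\Rightarrow$(i) and the verification that (ii), (iii) and (iv) are truly equivalent reformulations, each used in the direction that keeps the cycle closed.
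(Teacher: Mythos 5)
Your proposal is correct and follows essentially the same route as the paper: necessity of (iii) and (iv) via Lemma~\ref{Lemma:necessary-conditions:q>p}, the chain (iii)$\Rightarrow$(ii)$\Rightarrow$(i) via Theorem~\ref{Thm:Carleson-Withney} together with \cite[Lemma~4.4]{PelRat}, and (iv)$\Rightarrow$(i) via Proposition~\ref{Prop:p<q-sufficient}. The layer-cake computation you supply for (ii)$\Rightarrow$(i), using that $\lambda\mapsto\om(O_\lambda(f))$ is non-increasing, is precisely the step the paper leaves implicit, and you carry it out correctly.
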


\noindent{\em{Proof of Theorem~\ref{Theorem:CarlesonMeasures-cases2-3}}}. The fact that (iii) and (iv) are necessary conditions for $\mu$ to be Carleson follows by Lemma~\ref{Lemma:necessary-conditions:q>p}. The implications (iii)$\Rightarrow$(ii)$\Rightarrow$(i) for $q\ge p$ follow by Theorem~\ref{Thm:Carleson-Withney} and \cite[Lemma~4.4]{PelRat}. Proposition~\ref{Prop:p<q-sufficient} implies (iv)$\Rightarrow$(i).
\hfill $\Box$
\medskip

By choosing $d\mu(z)=\om(S(z))(1-|z|)^{-2}\,dA(z)$ (this measure is even  not necessarily finite), we see that $\mu(\Delta(a,r))\lesssim\om(S(a))$, but $\mu(S(a))/\om(S(a))\to\infty$, as $|a|\to1^-$, if $\frac{\widehat{\om}(r)}{\om(r)(1-r)}\to\infty$, as $r\to1^-$. See \cite[Chapter~1]{PelRat} for more information on the Bergman spaces induced by the weights satisfying this limit condition. Therefore the condition (iv) in Theorem~\ref{Theorem:CarlesonMeasures-cases2-3}(b) does not characterize $p$-Carleson measures for $A^p_\om$.

\section{Differentiation operators from $A^p_\om$ to $L^q(\mu)$}

The following result contains Theorem~\ref{Theorem:DifferentiationOperator}(b).

\begin{theorem}
Let either $0<p<q<\infty$ or $2\le p=q<\infty$, $\om\in\DD$ and $n\in\N$, and let $\mu$ be a positive Borel measure on $\D$. Then the following conditions are equivalent:
\begin{enumerate}
\item[\rm(i)] $D^{(n)}:A^p_\om\to L^q(\mu)$ is bounded;
\item[\rm(ii)] $\displaystyle z\mapsto\frac{\mu\left(S(z)\right)}{\om\left(S(z)\right)^\frac{q}{p}(1-|z|)^{nq}}$ belongs to $L^\infty$;
\item[\rm(iii)] $\displaystyle z\mapsto\frac{\mu\left(\Delta(z,r)\right)}{\om\left(S(z)\right)^\frac{q}{p}(1-|z|)^{nq}}$ belongs to $L^\infty$ for any fixed $r\in(0,1)$.
\end{enumerate}
\end{theorem}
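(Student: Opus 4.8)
The plan is to prove the cycle $\mathrm{(i)}\Rightarrow\mathrm{(ii)}\Rightarrow\mathrm{(iii)}\Rightarrow\mathrm{(i)}$, where only the last implication splits according to whether $p<q$ or $p=q\ge2$.

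Two of the implications are routine. The implication $\mathrm{(i)}\Rightarrow\mathrm{(ii)}$ is precisely Lemma~\ref{Lemma:necessary-conditions:q>p}, obtained by testing the boundedness of $D^{(n)}$ against the normalized functions $f_{a,p}$. For $\mathrm{(ii)}\Rightarrow\mathrm{(iii)}$ I would use that each pseudohyperbolic disc $\Delta(z,r)$ sits inside a Carleson square $S(\widetilde z)$ with $1-|\widetilde z|\asymp 1-|z|$, so that $\mu(\Delta(z,r))\le\mu(S(\widetilde z))$; feeding this into $\mathrm{(ii)}$ and invoking the comparisons $\om(S(\widetilde z))\asymp\om(S(z))$ granted by $\om\in\DD$ (Lemma~\ref{Lemma:replacement-Lemma1.1-2.}) yields $\mathrm{(iii)}$. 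In the range $p<q$ the remaining implication $\mathrm{(iii)}\Rightarrow\mathrm{(i)}$ is exactly Proposition~\ref{Prop:p<q-sufficient}, so nothing new is needed there.

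The real work is $\mathrm{(iii)}\Rightarrow\mathrm{(i)}$ on the diagonal $p=q\ge2$, where the Luecking-type argument of Proposition~\ref{Prop:p<q-sufficient} breaks down because there is no auxiliary exponent $s\in(p,q)$. Here I would exploit $p\ge2$ directly. Applying the subharmonicity estimate \eqref{Eq:suharmonic-n-derivatives} with exponent $2$ to the analytic function $f^{(n)}$ itself, so as to preserve the cancellation carried by the derivative, gives $|f^{(n)}(z)|^2\lesssim(1-|z|)^{-2}\int_{\Delta(z,r)}|f^{(n)}|^2\,dA$. Raising this to the power $p/2\ge1$, writing the inner average as $\int_\D|f^{(n)}(\zeta)|^2 K(z,\zeta)\,dA(\zeta)$ with $K(z,\zeta)=(1-|z|)^{-2}\chi_{\Delta(z,r)}(\zeta)$, and applying Minkowski's integral inequality in $L^{p/2}(\mu)$ gives
\[
\int_\D|f^{(n)}|^p\,d\mu\lesssim\left(\int_\D|f^{(n)}(\zeta)|^2\,W(\zeta)\,dA(\zeta)\right)^{p/2},\qquad W(\zeta)=\left(\int_{\Delta(\zeta,r)}\frac{d\mu(z)}{(1-|z|)^{p}}\right)^{2/p}.
\]
Since $\Delta$ is symmetric and $1-|z|\asymp1-|\zeta|$ on $\Delta(\zeta,r)$, condition $\mathrm{(iii)}$ (with $q/p=1$) yields $W(\zeta)\lesssim\om(S(\zeta))^{2/p}(1-|\zeta|)^{2(n-1)}$, and the whole matter is reduced to the single weighted $L^2$ estimate
\[
\int_\D|f^{(n)}(z)|^2(1-|z|)^{2(n-1)}\om(S(z))^{2/p}\,dA(z)\lesssim\|f\|_{A^p_\om}^2 .
\]

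This last estimate is the main obstacle. It asserts the boundedness of $D^{(n)}\colon A^p_\om\to L^2(\lambda)$ with $d\lambda=\om(S)^{2/p}(1-|\cdot|)^{2(n-1)}\,dA$, so it is itself a differentiation inequality in the range $q=2\le p$, i.e.\ outside the scope of Proposition~\ref{Prop:p<q-sufficient}. Written through the balanced derivative $\Psi(z)=f^{(n)}(z)(1-|z|)^{n}$, its left-hand side equals $\int_\D|\Psi|^2\om(S)^{2/p}\,dh$, which at $p=2$ is exactly $\|\Psi\|_{T^2_2(h,\om)}^2$ by Fubini's theorem and $\om(T(z))\asymp\om(S(z))$. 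Thus the crux is a one-sided square-function (Littlewood--Paley type) bound controlling the area integral of $\Psi$ by $\|f\|_{A^p_\om}$, and I expect this to be the hardest step: since no exact Littlewood--Paley identity is available for $\om\in\DD$, it must be obtained from the tent-space duality (Theorem~\ref{Thm:tent-spaces-duality}), the factorization (Theorem~\ref{Thm:Factorization-tent-spaces}) and the maximal inequality (Theorem~\ref{co:maxbou}). The hypothesis $p=q\ge2$ enters precisely here: with $q\ge2$ the tent exponent $\tfrac{2}{2-q}$ governing the sub-diagonal cases degenerates, so that the characterizing condition collapses to the plain $L^\infty$ bound $\mathrm{(iii)}$ rather than to a genuine tent-space membership.
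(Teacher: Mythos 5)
The implications (i)$\Rightarrow$(ii)$\Rightarrow$(iii) and the case $p<q$ of (iii)$\Rightarrow$(i) in your plan coincide with the paper's. The problem is the diagonal case $p=q\ge2$. Your Minkowski reduction is a valid inequality --- $\|f^{(n)}\|_{L^p(\mu)}^p\lesssim\bigl(\int_\D|f^{(n)}|^2W\,dA\bigr)^{p/2}$ with $W(\z)\lesssim\om(S(\z))^{2/p}(1-|\z|)^{2(n-1)}$ --- but it is too lossy: the estimate you reduce the theorem to, namely
$$
\int_\D|f^{(n)}(z)|^2(1-|z|)^{2(n-1)}\om(S(z))^{2/p}\,dA(z)\lesssim\|f\|_{A^p_\om}^2,
$$
is false for every $p>2$. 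Take $\om\equiv1$ and $n=1$: the claim becomes the boundedness of $D^{(1)}:A^p\to L^2\bigl((1-|z|)^{4/p}dA\bigr)$, and by the very characterization this paper establishes for the range $2\le q<p$ (classically, Luecking), that holds if and only if the exponent exceeds $1+2/p$, whereas $4/p<1+2/p$ whenever $p>2$. Concretely, for $p\ge4$ the lacunary function $f(z)=\sum_k z^{2^k}$ belongs to $A^p$ while $\int_\D|f'(z)|^2(1-|z|)^{4/p}\,dA(z)\asymp\sum_j2^{j(1-4/p)}=\infty$. Only at $p=2$ does your target collapse to the true Littlewood--Paley identity. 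So the route cannot be completed, quite apart from the fact that you also leave the final square-function bound unproved and only indicate which tools you ``expect'' to yield it.

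The loss happens exactly where you separate $|f^{(n)}|^2$ from the lower-order factor $|f|^{p-2}$. The paper's proof keeps them together: for $q\ge2$ the function $\triangle|f|^q\asymp|f|^{q-2}|f'|^2$ is subharmonic, which gives $|f'(z)|^q(1-|z|^2)^q\lesssim\int_{\Delta(z,r)}\triangle|f|^q\,dA$; after Fubini and hypothesis (iii) one lands on $\int_\D\triangle|f|^q(\z)\,\om(S(\z))^{q/p}\,dA(\z)$, which for $q=p$ is controlled by the exact Green/Littlewood--Paley formula $\int_\D\triangle|f|^p\,\om^\star\,dA\lesssim\|f\|_{A^p_\om}^p$ together with $\om^\star\asymp\om(S(\cdot))$ (and for $q>p$ by additionally inserting the pointwise bound $|f(z)|^{q-p}\lesssim\|f\|_{A^p_\om}^{q-p}\om(S(z))^{-(q-p)/p}$). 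The case $n\ge2$ then follows by one more application of the subharmonicity estimate, this time to $f'$. No tent-space duality or factorization is needed for this implication. I suggest you rework the diagonal case along these lines.
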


\begin{proof}
By Lemma~\ref{Lemma:necessary-conditions:q>p} and Proposition~\ref{Prop:p<q-sufficient}, it suffices to show that (iii)$\Rightarrow$(i) for $q=p\ge2$. We will prove this implication under the weaker assumption $q\ge\max\{2,p\}$. Let $r\in(0,1)$ be fixed, and let first $n=1$. The standard Paley-Littlewood formula and the fact that the Laplacian $\triangle|f|^q$ is subharmonic when $q\ge2$ give
    \begin{equation*}
    \begin{split}
    |f'(0)|^q\le\frac{q^2}{4}\int_{\D}\triangle|f|^q(z)\log\frac{1}{|z|}\,dA(z)\asymp\int_{\D}\triangle|f|^q(z)(1-|z|)\,dA(z).
    \end{split}
    \end{equation*}
An application of this inequality to the function $f(rz)$ gives
    $$
    |f'(0)|^q\lesssim\int_{\De(0,r)}\triangle|f|^q(z)\left(1-\frac{|z|}{r}\right)\,dA(z).
    $$
Replace now $f$ by $f\circ\vp_z$ to obtain
    \begin{equation}\label{Eq:Laplacian-first-derivative}
    \begin{split}
    |f'(z)|^q(1-|z|^2)^q&\lesssim\int_{\Delta(z,r)}\triangle|f|^q(\z)\left(1-\frac{|\vp_z(\z)|}{r}\right)\,dA(\z)\\
    &\le\int_{\Delta(z,r)}\triangle|f|^q(\z)\,dA(\z).
    \end{split}
    \end{equation}
This estimate, Fubini's theorem, the assumption (iii), the standard estimate $|f(z)|\lesssim\|f\|_{A^p_\om}\om(S(z))^{-p^{-1}}$
and \cite[Theorem~4.2]{PelRat}, together with Lemma~\ref{Lemma:replacement-Lemma1.1-2.}, yield
    \begin{equation}\label{iiiii}
    \begin{split}
    \|f'\|_{L^q(\mu)}^q
    &\lesssim\int_\D\frac{\triangle|f|^q(\z)}{(1-|\z|)^q}\mu(\Delta(\z,r))\,dA(\z)
    \lesssim\int_\D\triangle|f|^q(\z)\om(S(\z))^\frac{q}{p}\,dA(\z)\\
    &\lesssim\|f\|_{A^p_\om}^{q-p}\int_\D\triangle|f|^p(\z)\om(S(\z))\,dA(\z)\lesssim\|f\|_{A^p_\om}^q,
    \end{split}
    \end{equation}
and thus the case $n=1$ is proved.

To prove the general case, apply \eqref{Eq:suharmonic-n-derivatives}, \eqref{Eq:Laplacian-first-derivative}, Fubini's theorem and \eqref{iiiii}, to deduce
    \begin{equation*}
    \begin{split}
    \|f^{(n)}\|_{L^q(\mu)}^q
    &\lesssim\int_\D\frac{1}{(1-|z|)^{2+(n-1)q}}\left(\int_{\Delta(z,\r)}|f'(\z)|^q\,dA(\z)\right)\,d\mu(z)\\
    &\lesssim\int_\D\frac{1}{(1-|z|)^{2+nq}}\left(\int_{\Delta(z,\r)}\left(\int_{\Delta(\z,s)}\triangle|f|^q(u)\,dA(u)\right)\,dA(\z)\right)\,d\mu(z)\\
    &\lesssim\int_\D\frac{1}{(1-|z|)^{nq}}\left(\int_{\Delta(z,r)}\triangle|f|^q(u)\,dA(u)\right)\,d\mu(z)\\
    &\asymp\int_\D\frac{\triangle|f|^q(u)}{(1-|u|)^{nq}}\mu(\Delta(u,r))\,dA(u)\\
    &\lesssim\int_\D\triangle|f|^q(u)\om(S(u))^\frac{q}{p}\,dA(u)\lesssim\|f\|_{A^p_\om}^q,
    \end{split}
    \end{equation*}
where $\r,s\in(0,1)$ are chosen sufficiently small depending only on $r$.
\end{proof}

\begin{theorem}
Let $0<q\le p<\infty$, $\om\in\DD$ and $n\in\N$, and let $\mu$ be a positive Borel measure on $\D$. Then $D^{(n)}:A^p_\om\to L^q(\mu)$ is bounded if and only if, for any fixed $r\in(0,1)$, the function
    $$
    \Phi_\mu(z)=\frac{\mu(\Delta(z,r))}{\om(S(z))(1-|z|)^{qn}}
    $$
belongs to
    \begin{enumerate}
    \item[\rm(i)] $T^\frac{p}{p-q}_{\frac{2}{2-q}}\left(h,\om\right)$, if $q<\min\{2,p\}$;
    \item[\rm(ii)] $T^\infty_{\frac{2}{2-p}}\left(h,\om\right)$, if $q=p<2$;
    \item[\rm(iii)] $T^{\frac{p}{p-q}}_\infty\left(h,\om\right)$, if $2\le q<p$.
    \end{enumerate}
\end{theorem}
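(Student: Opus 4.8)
The plan is to \emph{linearize} the problem so that the tent spaces enter through a single duality pairing; the exponent $2$ in all three cases will come from the Hilbert–space square function describing $\|\cdot\|_{A^p_\om}$. Set $g_n(z)=(1-|z|)^nf^{(n)}(z)$. First I would record the two–sided estimate
\[
\|f^{(n)}\|_{L^q(\mu)}^q\asymp\langle|g_n|^q,\Phi_\mu\rangle_{T^2_2(h,\om)}=\int_\D|g_n(\z)|^q\Phi_\mu(\z)\om(T(\z))\,dh(\z).
\]
The upper bound follows from the subharmonicity estimate \eqref{Eq:suharmonic-n-derivatives}, Fubini's theorem and the definition of $\Phi_\mu$ (after writing $\frac{\mu(\Delta(\z,r))}{(1-|\z|)^{2+nq}}=\Phi_\mu(\z)\om(S(\z))(1-|\z|)^{-2}$ and $\frac{dA}{(1-|\z|)^2}\asymp dh$); the reverse bound holds because $|f^{(n)}|^q$ is subharmonic, so its $h$–average over $\Delta(\z,r)$ dominates its value. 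Here $\om(S(\z))\asymp\om(T(\z))$ by Lemma~\ref{Lemma:replacement-Lemma1.1-2.}. Next I would invoke the order–$n$ square–function description of the Bergman norm, i.e.\ \cite[Theorem~4.2]{PelRat} iterated, which gives $\|f\|_{A^p_\om}\asymp\sum_{j<n}|f^{(j)}(0)|+\|g_n\|_{T^p_2(h,\om)}$. Thus the boundedness of $D^{(n)}$ is equivalent to the pairing bound $\langle|g_n|^q,\Phi_\mu\rangle_{T^2_2(h,\om)}\lesssim\|g_n\|_{T^p_2(h,\om)}^q$, ranging over the $g_n$ arising from $f\in A^p_\om$.

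For sufficiency I would estimate the pairing by tent–space Hölder duality, using the identity $\||g_n|^q\|_{T^{a}_{b}(h,\om)}=\|g_n\|_{T^{qa}_{qb}(h,\om)}^q$. When $q<\min\{2,p\}$ one has $p/q>1$, $2/q>1$ and $(p+2)/q>2$, so the easy (Hölder) direction of Theorem~\ref{Thm:tent-spaces-duality} yields $\langle|g_n|^q,\Phi_\mu\rangle\lesssim\||g_n|^q\|_{T^{p/q}_{2/q}}\|\Phi_\mu\|_{T^{p/(p-q)}_{2/(2-q)}}=\|g_n\|_{T^p_2}^q\,\|\Phi_\mu\|_{T^{p/(p-q)}_{2/(2-q)}}$, which is case (i). For $q=p<2$ the outer exponent degenerates to $1$, and I would instead use the stopping–time estimate \eqref{Eq:CMS-estimate} to pair $T^1_{2/p}$ with $T^\infty_{2/(2-p)}$, giving case (ii). For $2\le q<p$ the inner exponent $2/q\le1$ forces a preliminary upgrade: from $A_{q,h}(g_n)\le A_{\infty,h}(g_n)^{1-2/q}A_{2,h}(g_n)^{2/q}$, the Cauchy estimate $A_{\infty,h}(g_n)\lesssim N(f)$, Hölder in $L^p_\om$ and \cite[Lemma~4.4]{PelRat}, I would deduce $\|g_n\|_{T^p_q(h,\om)}\lesssim\|f\|_{A^p_\om}$; pairing $T^{p/q}_1$ with $T^{p/(p-q)}_\infty$ then gives case (iii).

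For necessity I would test with the functions $f=S_\lambda(\{c_k\})$ from Lemma~\ref{Lemma:operator-tent-bergman}, so that $\|f\|_{A^p_\om}\lesssim\|\{c_k\}\|_{T^p_2(\{z_k\},\om)}$. Replacing $c_k$ by $r_k(t)c_k$ and applying Khinchine's inequality exactly as in the proof of Theorem~\ref{Thm:Calesonq<p} turns the boundedness hypothesis into
\[
\int_\D\left(\sum_k|c_k|^2(1-|z_k|)^{-2n}\chi_{T(z_k)}(z)\right)^{q/2}d\mu(z)\lesssim\|\{c_k\}\|_{T^p_2(\{z_k\},\om)}^q.
\]
Discretizing the $\mu$–integral over the dyadic tents (using that $\Phi_\mu$ is slowly varying, so that the $(1-|z_k|)^{-2n}$ weight matches the $(1-|z|)^{nq}$ in $\Phi_\mu$) rewrites the left side as a discrete pairing of $\{|c_k|^q\}$ against tent–averages of $\Phi_\mu$. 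Writing $d_k=|c_k|^q$, this says that $\Phi_\mu$ induces a bounded functional on $T^{p/q}_{2/q}(\{z_k\},\om)$, and I would read off its membership in the dual space: Theorem~\ref{Thm:tent-spaces-duality} gives $T^{p/(p-q)}_{2/(2-q)}$ in case (i) and, through its $p=1$ instance, $T^\infty_{2/(2-p)}$ in case (ii); for $2<q<p$ the inner exponent $2/q<1$ is precisely the range of Proposition~\ref{Proposition:duality-sequence-tent}, yielding $T^{p/(p-q)}_\infty$, with the value $q=2$ being the inner–index–one case of Theorem~\ref{Thm:tent-spaces-duality}. A final passage from the discrete to the continuous tent norm of $\Phi_\mu$ closes the argument.

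The main obstacle I anticipate is the necessity direction: one must show that the special test functions $S_\lambda(\{c_k\})$ already saturate the dual tent–space norm of $\Phi_\mu$. This needs the Khinchine randomization to replace $|f^{(n)}|^q$ by the square sum, a careful discretization aligning the weights, and the correct duality in each index regime — in particular the nonstandard duality of Proposition~\ref{Proposition:duality-sequence-tent} when $2<q<p$. Extra care is required at the transition $q=2$ and at the endpoint $q=p$, where an outer or inner exponent degenerates and the plain Hölder pairing must be replaced by the stopping–time estimate.
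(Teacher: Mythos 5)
Your proposal is correct in substance and follows the same overall strategy as the paper: linearize $\|f^{(n)}\|_{L^q(\mu)}^q$ as a $T^2_2(h,\om)$-pairing against $\Phi_\mu$, prove sufficiency by the square-function description of $\|\cdot\|_{A^p_\om}$ together with tent-space duality, and prove necessity by testing with $S_\lambda$, randomizing with Rademacher functions and Khinchine's inequality, and reading off membership of $\Phi_\mu$ in the dual via Theorem~\ref{Thm:tent-spaces-duality} (cases (i), (ii) and $q=2$) and Proposition~\ref{Proposition:duality-sequence-tent} (for $2<q<p$), followed by a discrete-to-continuous passage. Two local choices differ. First, you work with $g_n=(1-|z|)^nf^{(n)}$ and an order-$n$ square function, whereas the paper applies \eqref{Eq:suharmonic-n-derivatives} to $f'$ with order $n-1$ and only ever needs the first-order statement of \cite[Theorem~4.2]{PelRat}; your route is fine provided the iterated characterization is available for $\om\in\DD$, but the paper's reduction avoids having to justify it. Second, and more substantially, for $2\le q<p$ you obtain $\|g_n\|_{T^p_q(h,\om)}\lesssim\|f\|_{A^p_\om}$ from the interpolation $A_{q,h}(g_n)\le A_{\infty,h}(g_n)^{1-2/q}A_{2,h}(g_n)^{2/q}$, H\"older's inequality and the non-tangential maximal function estimate \cite[Lemma~4.4]{PelRat}; the paper instead goes through the Laplacian estimate \eqref{Eq:Laplacian-first-derivative} and Calder\'on's area theorem \cite[Theorem~1.3]{Pavlovic2013}. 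Your version is the classical Hardy-space device and is arguably more elementary; both yield the $T^{p/q}_1$ membership needed to pair with $\Phi_\mu\in T^{\frac{p}{p-q}}_\infty(h,\om)$.

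One claim should be removed: the pairing $\langle|g_n|^q,\Phi_\mu\rangle_{T^2_2(h,\om)}$ is \emph{not} two-sidedly comparable to $\|f^{(n)}\|^q_{L^q(\mu)}$. Subharmonicity gives $\int_{\Delta(z,r)}|f^{(n)}|^q\,dh\gtrsim|f^{(n)}(z)|^q$, which after Fubini yields $\langle|g_n|^q,\Phi_\mu\rangle\gtrsim\|f^{(n)}\|^q_{L^q(\mu)}$ --- the same direction as your ``upper bound'', not the reverse; the reverse inequality fails in general (take $\mu$ a point mass at a zero of $f^{(n)}$). Consequently, boundedness of $D^{(n)}$ is not literally equivalent to the pairing bound $\langle|g_n|^q,\Phi_\mu\rangle\lesssim\|g_n\|^q_{T^p_2(h,\om)}$. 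This does not damage your proof, since sufficiency only uses the direction $\|f^{(n)}\|^q_{L^q(\mu)}\lesssim\langle|g_n|^q,\Phi_\mu\rangle$ and your necessity argument proceeds through the test functions rather than through the claimed equivalence, but the framing should be corrected.
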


\begin{proof} We begin with showing that conditions (i)-(iii) are sufficient.

Let $r\in(0,1)$ be fixed and denote $F=|f'(w)|^q(1-|w|)^q $. Then \eqref{Eq:suharmonic-n-derivatives} and Fubini's theorem give
    \begin{equation}\label{eq:sufdual}
    \begin{split}
    \|f^{(n)}\|_{L^q(\mu)}^q&\lesssim\int_\D |f'(w)|^q\frac{\mu(\Delta(w,r))}{\om(S(w))(1-|w|)^{2+(n-1)q}}\om(T(w))\,dA(w)\\
    &= \langle F,\Phi_\mu\rangle_{T^2_2(h,\om)}.
    \end{split}
    \end{equation}

Under the hypotheses of (i) or (ii), $f\in A^p_\om$ if and only if $F\in T^{p/q}_{2/q}(h,\om)$ with $\|F\|_{T^{p/q}_{2/q}(h,\om)}\lesssim\|f\|^q_{A^p_\om}$ by \cite[Theorem~4.2]{PelRat}. Since $\Phi_\mu\in T^{\left(\frac{p}{q}\right)'}_{\left(\frac{2}{q}\right)'}(h,\om)$ by the assumption, and $T^{\left(\frac{p}{q}\right)'}_{\left(\frac{2}{p}\right)'}(h,\om)\simeq\left(T^{p/q}_{2/q}(h,\om)\right)^\star$ by Theorem~\ref{Thm:tent-spaces-duality}, it follows that $D^{(n)}:A^p_\om\to L^q(\mu)$ is bounded.

If $2\le q<p<\infty$, then \eqref{Eq:Laplacian-first-derivative} and Fubini's theorem give
   \begin{equation*}
    \begin{split}
    \|F\|^{\frac{q}{p}}_{T^{\frac{p}{q}}_{1}(h,\om)}
    &\le\int_\D\left(\int_{\Gamma(\z)}\int_{\Delta(w,r)}\triangle|f|^q(z)\,dA(z)\,dh(w)\right)^{\frac{p}{q}}\om(\z)\,dA(\z)\\
    &\le \int_\D\left(\int_{\Gamma'(\z)}\triangle|f|^q(z)\,dA(z)\right)^{\frac{p}{q}} \om(\z)\,dA(\z),
    \end{split}
    \end{equation*}
where $\Gamma'(\z)=\{z:\Gamma(\z)\cap\Delta(z,r)\ne\emptyset\}$. Arguing as in \eqref{eq:j3}, and using a result by Calder\'on~\cite[Theorem~1.3]{Pavlovic2013}, we get $F\in T^{\frac{p}{q}}_{1}(h,\om)$ with $\|F\|_{T^{\frac{p}{q}}_{1}(h,\om)}\lesssim\|f\|^q_{A^p_\om}$.
Since $\Phi_\mu\in T^{\left(\frac{p}{q}\right)'}_{\infty}(h,\om)$ by the assumption, and $T^{\left(\frac{p}{q}\right)'}_{\infty}(h,\om)\simeq\left(T^{p/q}_{1}(h,\om)\right)^\star$ by Theorem~\ref{Thm:tent-spaces-duality}, it follows
by \eqref{eq:sufdual} that $D^{(n)}:A^p_\om\to L^q(\mu)$ is bounded.

To see the necessity of conditions (i)-(iii), let $\{z_k\}$ be a $\d$-lattice such that $z_k\ne0$ for all $k$, and consider the operator
    $$
    S_\lambda(f)(z)=\sum_k f(z_k)\left(\frac{1-|z_k|}{1-\overline{z}_kz}\right)^\lambda,\quad z\in\D,
    $$
defined in Lemma~\ref{Lemma:operator-tent-bergman}. Then the assumption and Lemma~\ref{Lemma:operator-tent-bergman} yield
    \begin{equation*}
    \int_\D\left|\sum_kf(z_k)\frac{(1-|z_k|)^{\lambda}}{(1-\overline{z}_kz)^{\lambda+n}}\right|^q\,d\mu(z)
    \lesssim\|S_\lambda(f)\|_{A^p_\om}^q\lesssim\|f\|_{T^p_2(\{z_k\},\om)}^q.
    \end{equation*}
Replace now $f(z_k)$ by $f(z_k)r_k(t)$ and integrate with respect to $t$ to obtain
    $$
    \int_0^1\int_\D\left|\sum_kf(z_k)\frac{(1-|z_k|)^{\lambda}}{(1-\overline{z}_kz)^{\lambda+n}}r_k(t)\right|^qd\mu(z)\,dt\lesssim\|f\|_{T^p_2(\{z_k\},\om)}^q,
    $$
from which Fubini's theorem and a standard application of Khinchine's inequality yield
    \begin{equation*}
    I=\int_\D\left(\sum_k|f(z_k)|^2\frac{(1-|z_k|)^{2\lambda}}{|1-\overline{z}_kz|^{2\lambda+2n}}\right)^\frac{q}{2}\,d\mu(z)\lesssim\|f\|_{T^p_2(\{z_k\},\om)}^q.
    \end{equation*}
Now, for any fixed $r\in(0,1)$,
    \begin{equation*}
    \begin{split}
    I\gtrsim\sum_j\frac{|f(z_j)|^q}{(1-|z_j|)^{qn}}\mu(\Delta(z_j,r)),
    \end{split}
    \end{equation*}
and hence
    \begin{equation}\label{pippeli}
    \begin{split}
    &\sum_j\frac{|f(z_j)|^q}{(1-|z_j|)^{qn}}\mu(\Delta(z_j,r))\lesssim\|f\|_{T^p_2(\{z_k\},\om)}^q\\
    &=\left(\int_\D\left(\left(\sum_{z_k\in\Gamma(\z)}\left(|f(z_k)|^q\right)^\frac{2}{q}\right)^\frac{q}{2}\right)^\frac{p}{q}\om(\z)\,dA(\z)\right)^\frac{q}{p}.
    \end{split}
    \end{equation}

We now treat different cases separately.

(i) If $q<\min\{2,p\}$, then $s=\frac{p}{q}>1$ and $r=\frac{2}{q}>1$, and hence $(T^s_r(\{z_k\},\om))^\star\simeq T^{s'}_{r'}(\{z_k\},\om))$ by Theorem~\ref{Thm:tent-spaces-duality}. Therefore \eqref{pippeli} yields
    $$
    \int_\D\left(\sum_{z_k\in\Gamma(\z)}\left(\frac{\mu(\Delta(z_k,r))}{\om(T(z_k))(1-|z_k|)^{qn}}\right)^\frac{2}{2-q}\right)
    ^{\frac{2-q}{2}\frac{p}{p-q}}\om(\z)\,dA(\z)<\infty.
    $$
Take $t\in(0,\d^{-1})$ sufficiently large such that $\Gamma(\z)\subset\cup_{z_k\in\Gamma(\z)}\Delta(z_k,t\d)$ for all $\z\in\D$, and then $r\in(0,1)$ sufficiently large such that $\Delta(z,\r)\subset\Delta(z_k,r)$ for all $z\in\Delta(z_k,t\d)$ and all $k$. Then Lemma~\ref{Lemma:replacement-Lemma1.1-2.} yields
    \begin{equation*}
    \begin{split}
    &\int_{\Gamma(\z)}\left(\frac{\mu(\Delta(z,\r))}{\om(T(z))(1-|z|)^{qn}}\right)^{\frac{2}{2-q}}\,dh(z)\\
    &\le\sum_{z_k\in\Gamma(\z)}\int_{\Delta(z_k,t\d)}\left(\frac{\mu(\Delta(z,\r))}{\om(T(z))(1-|z|)^{qn}}\right)^{\frac{2}{2-q}}\,dh(z)\\
    &\asymp\sum_{z_k\in\Gamma(\z)}\left(\frac{1}{\om(T(z_k))(1-|z_k|)^{qn}}\right)^{\frac{2}{2-q}}\int_{\Delta(z_k,t\d)}
    \left(\mu(\Delta(z,\r))\right)^{\frac{2}{2-q}}\,dh(z)\\
    &\lesssim\sum_{z_k\in\Gamma(\z)}\left(\frac{\mu(\Delta(z_k,r))}{\om(T(z_k))(1-|z_k|)^{qn}}\right)^{\frac{2}{2-q}},
    \end{split}
    \end{equation*}
and the assertion follows.

(ii) If $q=p<2$, then $s=\frac{p}{q}=1$ and $r=\frac{2}{q}>1$, and hence $(T^1_r(\{z_k\},\om))^\star\simeq T^\infty_{r'}(\{z_k\},\om))$ by Theorem~\ref{Thm:tent-spaces-duality}. Therefore \eqref{pippeli} yields
    $$
    \sup_{a\in\D}\frac1{\om(T(a))}\sum_{z_k\in T(a)}\left(\frac{\mu(\Delta(z_k,r))}{\om(T(z_k))(1-|z_k|)^{pn}}\right)^\frac{2}{2-p}\om(T(z_k))<\infty
    $$
for all $r\in(0,1)$. The assertion follows.

(iii) If $2<q<p$, then $s=\frac{p}{q}>1$ and $r=\frac{2}{q}<1$, and hence Proposition~\ref{Proposition:duality-sequence-tent} yields
    $$
    \int_\D\left(\sup_{z_k\in\Gamma(\z)}\frac{\mu(\Delta(z_k,r))}{\om(T(z_k))(1-|z_k|)^{qn}}\right)^\frac{p}{p-q}\om(\z)\,dA(\z)<\infty,\quad 0<r<1,
    $$
from which the assertion follows as in the case (i). The case $q=2$ is proved similarly by using Theorem~\ref{Thm:tent-spaces-duality} instead of Proposition~\ref{Proposition:duality-sequence-tent}.
\end{proof}


\end{document}